\DeclareMathOperator{\compose}{\circ}
\DeclareMathOperator{\dvol}{dvol}
\DeclareMathOperator{\Ric}{Ric}
\DeclareMathOperator{\hash}{\sharp}
\DeclareMathOperator{\Imaginary}{Im}
\DeclareMathOperator{\Real}{Re}
\newcommand{\omL}{\overline{\mathcal{L}}}
\newcommand{\cg}{\widetilde{g}}
\newcommand{\cf}{\widetilde{f}}
\newcommand{\cT}{\widetilde{T}}
\newcommand{\cV}{\widetilde{V}}
\newcommand{\cDelta}{\widetilde{\Delta}}
\newcommand{\cmC}{\widetilde{\mathcal{C}}}
\newcommand{\cmE}{\widetilde{\mathcal{E}}}
\newcommand{\hmI}{\widehat{\mathcal{I}}}
\newcommand{\hmO}{\widehat{\mathcal{O}}}
\newcommand{\htheta}{\widehat{\theta}}
\newcommand{\hA}{\widehat{A}}
\newcommand{\hI}{\widehat{I}}
\newcommand{\hP}{\widehat{P}}
\newcommand{\hQ}{\widehat{Q}}
\newcommand{\hX}{\widehat{X}}
\newcommand{\lv}{\lvert}
\newcommand{\rv}{\rvert}
\newcommand{\bCP}{\mathbb{C}P}
\newcommand{\dbar}{\overline{\partial}}
\renewcommand{\d}{\partial}
\newcommand{\mA}{\mathcal{A}}
\newcommand{\mC}{\mathcal{C}}
\newcommand{\mD}{\mathcal{D}}
\newcommand{\mE}{\mathcal{E}}
\newcommand{\mF}{\mathcal{F}}
\newcommand{\mI}{\mathcal{I}}
\newcommand{\mK}{\mathcal{K}}
\newcommand{\mL}{\mathcal{L}}
\newcommand{\mN}{\mathcal{N}}
\newcommand{\mO}{\mathcal{O}}
\newcommand{\mP}{\mathcal{P}}
\newcommand{\mQ}{\mathcal{F}}  
\newcommand{\mS}{\mathcal{S}}
\newcommand{\bC}{\mathbb{C}}
\newcommand{\bD}{\mathbb{D}}
\newcommand{\bN}{\mathbb{N}}
\newcommand{\bR}{\mathbb{R}}
\newcommand{\bV}{\mathbb{V}}
\newcommand{\bZ}{\mathbb{Z}}
\newcommand{\MA}{M_{\mathcal{A}}}
\newcommand{\PF}{P_{\mathcal{F}}}
\newcommand{\rr}{r}
\newcommand{\bw}{\mathbf{w}}
\newcommand{\btheta}{\boldsymbol{\theta}}
    \@ifdefinable{\invariant}{\def\invariant/{secondary invariant}}
\newcommand{\comment}[1]{}
\newtheorem{thm}{Theorem}[section]
\newtheorem{prop}[thm]{Proposition}
\newtheorem{lem}[thm]{Lemma}
\newtheorem{cor}[thm]{Corollary}
\theoremstyle{definition}
\theoremstyle{remark}
\newtheorem{remark}[thm]{Remark}
\numberwithin{equation}{section}
\def\sideremark#1{\ifvmode\leavevmode\fi\vadjust{\vbox to0pt{\vss
 \hbox to 0pt{\hskip\hsize\hskip1em
 \vbox{\hsize3cm\tiny\raggedright\pretolerance10000
  \noindent #1\hfill}\hss}\vbox to8pt{\vfil}\vss}}}%
\newcommand{\D}{\mathbb{D}}
\begin{document}

\title[The $P^\prime$-operator and $Q^\prime$-curvature via CR tractors]{The $P^\prime$-operator, the $Q^\prime$-curvature, and the CR tractor calculus}
\author{Jeffrey S.\ Case}
\address{109 McAllister Building \\ Penn State University \\ University Park, PA 16802}
\email{jscase@psu.edu}
\author{A.\ Rod Gover}
\thanks{ARG gratefully acknowledges support from the Royal Society of New Zealand via Marsden Grants 13-UOA-018 and 16-UOA-051 }
\address{Department of Mathematics \\ University of Auckland \\ Private Bag 92019 \\ Auckland 1042, New Zealand}
\email{r.gover@auckland.ac.nz}
\keywords{CR pluriharmonic functions; pseudo-Einstein manifold; CR invariant operator; $P$-prime operator; $Q$-prime curvature}
\subjclass[2010]{Primary 32V05; Secondary 32T15}
\begin{abstract}
 We establish an algorithm which computes formulae for the CR GJMS operators, the $P^\prime$-operator, and the $Q^\prime$-curvature in terms of CR tractors.  When applied to torsion-free pseudo-Einstein contact forms, this algorithm both gives an explicit factorisation of the CR GJMS operators and the $P^\prime$-operator, and shows that the $Q^\prime$-curvature is constant, with the constant explicitly given in terms of the Webster scalar curvature.  We also use our algorithm to derive local formulae for the $P^\prime$-operator and $Q^\prime$-curvature of a five-dimensional pseudo-Einstein manifold.  Comparison with Marugame's formulation of the Burns--Epstein invariant as the integral of a pseudohermitian invariant yields new insights into the class of local pseudohermitian invariants for which the total integral is independent of the choice of pseudo-Einstein contact form.
\end{abstract}
\maketitle

\section{Introduction}
\label{sec:intro}

An important class of differential operators in CR geometry are the CR
GJMS (or Gover--Graham) operators~\cite{GoverGraham2005}.  The CR GJMS
operator $P_{2k}$ is a formally self-adjoint differential operator
with principal part the $k$-th power $(-\Delta_b)^k$ of the negative
of the sublaplacian; our convention is that $-\Delta_b$ is a positive
operator.  This operator is defined on any pseudohermitian manifold
$(M^{2n+1},H,\theta)$ with $k\leq n+1$ and is CR invariant,
$P_{2k}\colon\mE(-\frac{n+1-k}{2})\to\mE(-\frac{n+1+k}{2})$; i.e.\ if
$\htheta=e^\Upsilon\theta$, then
\[ e^{\frac{n+1+k}{2}\Upsilon}\hP_{2k}\left(f\right) = P_{2k}\left(e^{\frac{n+1-k}{2}\Upsilon}f\right) \]
for all $f\in C^\infty(M)$.  Special cases are the CR Laplacian $P_2$ studied by Jerison and Lee~\cite{JerisonLee1987} and the CR Paneitz operator $P_4$ which are, for example, important in the study of the embedding problem in three dimensions~\cite{ChanilloChiuYang2010}.

The critical CR GJMS operators $P_{2n+2}$ are of particular interest.
The kernel of $P_{2n+2}$ is nontrivial, containing the space $\mP$ of
CR pluriharmonic functions~\cite{CaseYang2012,Hirachi2013}; indeed,
this characterizes the kernel on the standard CR
spheres~\cite{BransonFontanaMorpurgo2007}.  As such, Branson's
argument of analytic continuation in the dimension~\cite{Branson1995}
gives rise to the $P^\prime$-operator .  This operator was first
identified on the sphere by Branson, Fontana and
Morpurgo~\cite{BransonFontanaMorpurgo2007}, then on general
three-dimensional CR manifolds by Yang and the first-named
author~\cite{CaseYang2012}, and then in general dimensions by Hirachi~\cite{Hirachi2013}.  As an operator $P^\prime\colon\mP\to
C^\infty(M)$, the $P^\prime$-operator is not invariant; rather, if
$\htheta=e^\Upsilon\theta$, then
\begin{equation}
 \label{eqn:pprime_trans_intro}
 e^{(n+1)\Upsilon}\hP^\prime(f) = P^\prime(f) + P_{2n+2}(f\Upsilon)
\end{equation}
for all $f\in\mP$.  In this way, one could think of $P^\prime$ as a $Q$-curvature operator (cf.\ \cite{BransonGover2005}).  For geometric applications, it is often preferable to regard the $P^\prime$-operator as a map $P^\prime\colon\mP\to C^\infty(M)/\mP^\perp$.  Both $\mP\subset\mE(0)$ and $\mP^\perp\subset\mE(-n-1)$ are CR invariant spaces, while the self-adjointness of $P_{2n+2}$ and the fact $\mP\subset\ker P_{2n+2}$ combine with~\eqref{eqn:pprime_trans_intro} to imply that $P^\prime\colon\mP\to C^\infty(M)/\mP^\perp$ is CR invariant.  In particular, $P^\prime$ determines a CR invariant pairing $\mP\times\mP\ni(u,v)\mapsto\int u\,P^\prime v$.

The natural extension of Branson's $Q$-curvature to the CR setting is
$Q:=P^\prime(1)$ (cf.\ \cite{FeffermanHirachi2003}).  While the total $Q$-curvature is a CR invariant, it is often trivial: The
total $Q$-curvature of a compact three-dimensional CR manifold is
always zero~\cite{Hirachi1990} and the $Q$-curvature vanishes
identically for any pseudo-Einstein
manifold~\cite{FeffermanHirachi2003}.  The latter fact implies that we
may again use analytic continuation in the dimension to define the
$Q^\prime$-curvature as a pseudohermitian invariant of pseudo-Einstein
manifolds; see~\cite{CaseYang2012} in dimension three
and~\cite{Hirachi2013} in general dimension.  Suppose that $\theta$ is
pseudo-Einstein.  Then $\htheta=e^\Upsilon\theta$ is pseudo-Einstein if
and only if $\Upsilon\in\mP$; see~\cite{Lee1988}.
If $\Upsilon\in\mP$, we find that
\begin{equation}
 \label{eqn:qprime_trans_intro}
 e^{(n+1)\Upsilon}\hQ^\prime = Q^\prime + P^\prime(\Upsilon) + \frac{1}{2}P_{2n+2}(\Upsilon^2) ,
\end{equation}
where we regard $P^\prime$ and $Q^\prime$ as $C^\infty(M)$-valued.  Regarding instead $P^\prime$ and $Q^\prime$ as $C^\infty(M)/\mP^\perp$-valued, we have the transformation rule
\[ e^{(n+1)\Upsilon}\hQ^\prime = Q^\prime + P^\prime(\Upsilon) . \]
It is in this context that the $Q^\prime$-curvature prescription problem seems solvable; see~\cite{CaseHsiaoYang2014} for progress in the three-dimensional setting.

A key property of the $Q^\prime$-curvature is that its total integral over a compact pseudo-Einstein manifold is a \emph{\invariant/}.  Following Hirachi~\cite{Hirachi2013}, by a \invariant/ we mean a pseudohermitian invariant which is not CR invariant, but which is invariant within the distinguished class of pseudo-Einstein contact forms.  That the total $Q^\prime$-curvature is independent of the choice of pseudo-Einstein contact form follows from~\eqref{eqn:qprime_trans_intro} and the self-adjointness of the critical CR GJMS operator and of the $P^\prime$-operator~\cite{CaseYang2012,Hirachi2013}; that it is not independent of the choice of contact form follows from~\cite[Proposition~6.1]{CaseYang2012}.

As a global \invariant/, the total $Q^\prime$-curvature is a biholomorphic invariant of domains in $\bC^{n+1}$ (cf.\ \cite{FeffermanHirachi2003}).  It is interesting to compare it with the Burns--Epstein invariant~\cite{BurnsEpstein1988,BurnsEpstein1990c,Marugame2013}.  In dimension three, the total $Q^\prime$-curvature agrees with the Burns--Epstein invariant up to a universal constant~\cite{CaseYang2012,Hirachi2013}, whereupon one obtains the Gauss--Bonnet formula
\[ \chi(X) = \int_X \left( c_2 - \frac{1}{3}c_1^2\right) + \frac{1}{16\pi^2}\int_M Q^\prime \]
for $X\subset\bC^2$ a bounded strictly pseudoconvex domain with boundary $M^3=\partial X$ and the Chern forms are computed with
respect to a complete K\"ahler--Einstein metric in $X$.  In dimension
five, the total $Q^\prime$-curvature and the Burns--Epstein invariant
do not in general agree; see Theorem~\ref{thm:marugame} and Proposition~\ref{prop:iprime} below for more precise statements.

At present, there are only two ways to study the CR GJMS operators,
the $P^\prime$-operator, and the $Q^\prime$-curvature.  The first is
to restrict to dimension three, where local formulae are known and can be used to address questions involving the
signs of these objects~\cite{CaseYang2012}.  The second is to pass
to the ambient manifold, where the definitions are relatively simple and may be readily
used to prove many formal properties of these objects~\cite{Hirachi2013}.  However, it
is not straightforward to produce local formulae for these operators
from the ambient definition, nor is it known how to use the ambient
definition to address issues such as the sign of the CR GJMS operators
and the $P^\prime$-operator or the value of the total
$Q^\prime$-curvature.

The goal of this article is to rectify some of these issues by giving
a new interpretation of the CR GJMS operators, the
$P^\prime$-operator, and the $Q^\prime$-curvature.  Specifically, we
give an interpretation of these objects in terms of the CR tractor
calculus, building on the work of Graham and the second-named author
on the CR GJMS operators~\cite{GoverGraham2005}.  Our main result is
an algorithm, encoded in Theorem~\ref{thm:general_formula}, which
produces a tractor formula for these operators in terms of tractor $D$-operators and the tractor curvature (cf.\ \cite{GoverPeterson2003}).  As an immediate application, we compute the $Q^\prime$-curvature and obtain factorisations of the CR GJMS operators
and the $P^\prime$-operator on any Einstein pseudohermitian manifold (cf.\ \cite{Gover2006});
i.e.\ on any pseudo-Einstein manifold with vanishing torsion.

\begin{thm}
 \label{thm:factorisation}
 Let $(M^{2n+1},H,\theta)$ be an embeddable Einstein pseudohermitian manifold.  For any integer $1\leq k\leq n+1$, the CR GJMS operator $P_{2k}$ is equal to
 \begin{equation}
  \label{eqn:factorisation_P}
  P_{2k} = \begin{cases}
             \displaystyle\prod_{\ell=1}^{\frac{k}{2}} \left(-\Delta_b+ic_\ell\nabla_0+d_\ell P\right)\left(-\Delta_b-ic_\ell\nabla_0+d_\ell P\right), & \text{if $k$ is even}, \\
             Y\displaystyle\prod_{\ell=1}^{\frac{k-1}{2}} \left(-\Delta_b+ic_\ell\nabla_0+d_\ell P\right)\left(-\Delta_b-ic_\ell\nabla_0+d_\ell P\right), & \text{if $k$ is odd} ,
            \end{cases}
 \end{equation}
 where $c_\ell=k-2\ell+1$ and $d_\ell=\frac{n^2-(k-2\ell+1)^2}{n}$ and $Y=-\Delta_b+nP$ is the CR Yamabe operator.  Moreover, the $P^\prime$-operator is
 \begin{equation}
  \label{eqn:factorisation_Pprime}
  P_{2n+2}^\prime = n!\left(\frac{2}{n}\right)^{n+1}\prod_{\ell=0}^{n} \left(-\Delta_b + 2\ell P\right)
 \end{equation}
 and the $Q^\prime$-curvature is
 \begin{equation}
  \label{eqn:factorisation_Qprime}
  Q_{2n+2}^\prime = (n!)^2\left(\frac{4P}{n}\right)^{n+1} .
 \end{equation}
\end{thm}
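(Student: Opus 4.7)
The proof specialises the tractor-formula algorithm of Theorem~\ref{thm:general_formula} to the torsion-free pseudo-Einstein setting, where the pseudohermitian torsion vanishes, the Webster scalar curvature $R$ (and hence $P$) is constant, and the tractor curvature collapses to algebraic pieces involving only $P$ and the Levi form. The first step is to work out the action of a single tractor $D$-operator on a density of weight $w$ in this background: after all curvature terms are absorbed into the constant $P$, the result is a second-order operator of the schematic form $-\Delta_b + \alpha(w,n)\,i\nabla_0 + \beta(w,n)\,P$, with explicit rational coefficients $\alpha,\beta$ depending only on the weight and dimension.

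For $P_{2k}$ one then composes $k$ such operators as prescribed by the algorithm, tracking the weight shift at each stage. On the Einstein background these operators commute up to torsion contributions that vanish, and the product groups into the conjugate pairs $(-\Delta_b + i c_\ell \nabla_0 + d_\ell P)(-\Delta_b - i c_\ell \nabla_0 + d_\ell P)$ of~\eqref{eqn:factorisation_P}. When $k$ is odd, the unpaired middle factor sits at the self-conjugate weight $w = -(n+1)/2$, where the imaginary first-order term vanishes and the curvature coefficient reduces to $n$, reproducing the CR Yamabe operator $Y = -\Delta_b + nP$. The explicit formulas $c_\ell = k - 2\ell + 1$ and $d_\ell = (n^2 - c_\ell^2)/n$ then follow from elementary bookkeeping of the weight shifts.

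For the critical case $2k = 2n+2$ the tractor algorithm formally degenerates, and the $P^\prime$-operator is recovered via the analytic continuation in the weight that the algorithm encodes; on the Einstein background this continuation can be carried out in closed form and yields~\eqref{eqn:factorisation_Pprime}, with the universal constant $n!(2/n)^{n+1}$ accumulated from the normalisations picked up at each stage. The $\ell=0$ factor is precisely $-\Delta_b$, consistent with $\mP\subset\ker P^\prime_{2n+2}$. The $Q^\prime$-curvature is \emph{not} obtained by evaluating $P^\prime(1)$ in~\eqref{eqn:factorisation_Pprime}, which would give zero; rather, the algorithm produces $Q^\prime$ as a separate constant-term residue in the same analytic continuation, and on the Einstein background this residue simplifies, using only that $P$ is constant and $\Delta_b P = 0$, to~\eqref{eqn:factorisation_Qprime}.

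The main obstacle is the bookkeeping in the critical weight: one must carefully extract the residue and accumulated constants from the analytic continuation producing $P^\prime$ and $Q^\prime$, distinguishing $Q^\prime$ from $P^\prime(1)$, which vanishes through the $\ell=0$ factor. Once this is handled, the remaining work is straightforward algebra on the simplified Einstein background.
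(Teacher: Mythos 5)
Your outline for \eqref{eqn:factorisation_P} follows the paper's route: Proposition~\ref{prop:einstein_parallel} supplies a parallel tractor $I_A=\tfrac{1}{n+1}\bD_A\sigma$, contraction with which kills every curvature term in Theorem~\ref{thm:general_formula}, so that $P_{2k}$ collapses to a composition of $2\sigma^{-1}I^{\bar C}\bD_{\bar C}$ and $4I^AI^{\bar B}\bD_A\bD_{\bar B}$; in the Einstein scale these are Folland--Stein factors $-\Delta_b\pm ia\nabla_0+\tfrac{n^2-a^2}{n}P$ with $a=n+2w$, and tracking the weight shift gives $c_\ell$ and $d_\ell$. One small correction: the unpaired Yamabe factor in the odd case occurs at weight $w=-n/2$, where $a=n+2w=0$, not at $w=-(n+1)/2$.

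The genuine gap is in the passage to $P^\prime$ and $Q^\prime$. The target formula \eqref{eqn:factorisation_Pprime} contains no $\nabla_0$ terms, while the factors in \eqref{eqn:factorisation_P} do, so "carrying out the continuation in closed form" on \eqref{eqn:factorisation_P} cannot by itself produce \eqref{eqn:factorisation_Pprime}; you must explain where the $\nabla_0$-dependence goes. The paper's mechanism, which your proposal never identifies, is to restrict to $\mP$ and use the identity
\[ \left(-\Delta_b+c_\ell i\nabla_0+d_\ell P\right)\left(-\Delta_b-c_\ell i\nabla_0+d_\ell P\right)=\frac{c_\ell^2}{n^2}C+\frac{d_\ell}{n}\bigl(-\Delta_b+(n-c_\ell)P\bigr)\bigl(-\Delta_b+(n+c_\ell)P\bigr), \]
where $C=\Delta_b^2+n^2\nabla_0^2$ in the torsion-free Einstein case and $C$ annihilates CR pluriharmonic functions. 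This converts $P_{2k}\rvert_{\mP}$ into the explicit product $\prod_{\ell=1}^k\tfrac{n-k-1+2\ell}{n}\bigl(-\Delta_b+(n-k-1+2\ell)P\bigr)$, whose simple (resp.\ double) zero at $k=n+1$ is exactly what cancels the factor $\tfrac{2}{n-k+1}$ (resp.\ $\tfrac{4}{(n-k+1)^2}$ applied to $P_{2k}(1)$) in the Branson continuation defining $P^\prime$ (resp.\ $Q^\prime$), made rigorous via log densities as in Section~\ref{subsec:pluri/pluri}. Your reading of $Q^\prime$ as a residue distinct from $P^\prime(1)$ is correct in spirit, but without the $C$-identity the "straightforward algebra" you defer to is precisely the missing step. (Also note $\mP\not\subset\ker P^\prime$; the $\ell=0$ factor shows only that constants lie in the kernel.)
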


An alternative proof of Theorem~\ref{thm:factorisation} has been given by Takeuchi~\cite{Takeuchi2017} using purely ambient techniques.

Since the standard CR sphere and the Heisenberg group with its standard contact form are both Einstein pseudohermitian manifolds, \eqref{eqn:factorisation_P} recovers the known formulae for the CR GJMS operators on these manifolds~\cite{BransonFontanaMorpurgo2007,Graham1984}.  Moreover, \eqref{eqn:factorisation_Pprime} recovers the formula for the $P^\prime$-operator obtained by Branson, Fontana and Morpurgo on the sphere and~\eqref{eqn:factorisation_Qprime} gives a geometric meaning to the constant in their sharp Onofri-type inequality on CR pluriharmonic functions~\cite{BransonFontanaMorpurgo2007}.

Einstein pseudohermitian manifolds are equivalent to $\eta$-Sasaki--Einstein manifolds (cf.\ \cite{Leitner2007,Sparks2011}).  This observation leads to a wealth of examples to which Theorem~\ref{thm:factorisation} applies (cf.\ \cite{BoyerGalicki2008,BoyerGalickiMatzeu2006,Sparks2011}).

Theorem~\ref{thm:factorisation} gives factorisations of the CR GJMS operators in terms of Folland--Stein operators and of the $P^\prime$-operator in terms of the sublaplacian.  In particular, the spectrum of the $P^\prime$-operator is completely understood in terms of the scalar curvature and the spectrum of the sublaplacian of an Einstein pseudohermitian manifold.  Likewise, it determines the total $Q^\prime$-curvature of an Einstein pseudohermitian manifold in terms of its scalar curvature and volume.  As a special case of these observations, we have the following corollary.

\begin{cor}
 \label{cor:positivity}
 Let $(M^{2n+1},H,\theta)$ be a compact embeddable Einstein pseudohermitian manifold with nonnegative CR Yamabe constant.  Then $P^\prime\geq0$, $\ker P^\prime=\bR$, and
 \begin{equation}
  \label{eqn:positivity}
  \int_M Q^\prime \leq \int_{S^{2n+1}} Q_0^\prime,
 \end{equation}
 where the right-hand side denotes the total $Q^\prime$-curvature of the standard CR $(2n+1)$-sphere.  Moreover, equality holds in~\eqref{eqn:positivity} if and only if $(M^{2n+1},H,\theta)$ is CR equivalent to the standard CR sphere.
\end{cor}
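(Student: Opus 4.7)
The plan is to leverage the explicit formulae from Theorem~\ref{thm:factorisation}. Since $\theta$ is Einstein, the scalar $P$ in~\eqref{eqn:factorisation_P} is constant and proportional to the Webster scalar curvature, so the CR Yamabe functional of $\theta$ has the form $Y(\theta)=c\,P\,\vol(M,\theta)^{1/(n+1)}$ for a positive universal constant $c$. Since $Y(\theta)$ is bounded below by the CR Yamabe invariant $\lambda(M,H,[\theta])$, which is nonnegative by hypothesis, this immediately forces $P\geq 0$.

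With $P\geq 0$ in hand, the positivity of $P^\prime$ follows from~\eqref{eqn:factorisation_Pprime}: each factor $-\Delta_b+2\ell P$ is nonnegative and self-adjoint on $L^2(M,\theta)$, and all of the factors are polynomials in $-\Delta_b$ and therefore commute. Hence $P^\prime$ is a product of commuting nonnegative self-adjoint operators, so $P^\prime\geq 0$. To identify its kernel I diagonalise simultaneously via the spectral decomposition of $-\Delta_b$: a function $f$ lies in $\ker P^\prime$ iff $-\Delta_b f=-2\ell P\,f$ for some $\ell\in\{0,\ldots,n\}$. Since $-\Delta_b$ is nonnegative on compact $M$, the only admissible eigenvalue is $0$, whose eigenspace is $\bR$; when $P>0$ this forces $\ell=0$ and gives $\ker P^\prime=\bR$, while the degenerate case $P=0$ collapses the product to a multiple of $(-\Delta_b)^{n+1}$, with the same conclusion.

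For~\eqref{eqn:positivity}, the constancy of $Q^\prime$ given by~\eqref{eqn:factorisation_Qprime} combined with the formula for $Y(\theta)$ above yields $\int_M Q^\prime=c^\prime\,Y(\theta)^{n+1}$ for a positive universal constant $c^\prime$, and the same identity on the round sphere gives $\int_{S^{2n+1}}Q_0^\prime=c^\prime\,Y(\theta_0)^{n+1}$. It therefore suffices to establish $Y(\theta)\leq Y(\theta_0)$ for any Einstein pseudohermitian contact form $\theta$, with equality only if $(M,H)$ is CR equivalent to the standard sphere. For this I invoke the Jerison--Lee uniqueness theorem for solutions of the CR Yamabe equation to identify $Y(\theta)$ with $\lambda(M,H,[\theta])$: $\theta$ is a constant-scalar-curvature contact form, and is the unique such contact form in its conformal class (up to scaling) unless $(M,H)$ is the sphere, hence coincides up to scaling with a Yamabe minimiser. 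Combining with the sharp Folland--Stein Sobolev inequality (Jerison--Lee, with the rigidity clause completed by Frank--Lieb), which gives $\lambda(M,H,[\theta])\leq Y(\theta_0)$ with strict inequality unless $(M,H)$ is CR equivalent to the sphere, then yields both~\eqref{eqn:positivity} and its equality characterisation. The main obstacle in this outline is the rigidity part of the sharp CR Sobolev inequality identifying $S^{2n+1}$ as the unique extremiser of the CR Yamabe invariant.
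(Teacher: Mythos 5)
Your handling of the first two conclusions is correct and is essentially what the paper intends (the paper leaves the proof implicit, presenting the corollary as a direct consequence of Theorem~\ref{thm:factorisation}): since $P$ is constant on an Einstein pseudohermitian manifold, $Y(\theta)=2(n+1)P\vol(M,\theta)^{1/(n+1)}\geq\lambda(M,H)\geq0$ forces $P\geq0$, and then \eqref{eqn:factorisation_Pprime} exhibits $P^\prime$ as a product of commuting, nonnegative affine functions of $-\Delta_b$, whose kernel reduces via the spectral decomposition to $\ker(-\Delta_b)=\bR$.

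Your proof of \eqref{eqn:positivity} and its equality case, however, has a genuine gap. You reduce to $Y(\theta)\leq Y(\theta_0)$ via the chain $Y(\theta)=\lambda(M,H)\leq\lambda(S^{2n+1})$, and both links are problematic. First, identifying $Y(\theta)$ with $\lambda(M,H)$ needs $\theta$ to be a Yamabe \emph{minimiser}; uniqueness of constant-scalar-curvature representatives (the CR Obata theorem, itself not available in this generality) only yields this once a minimiser is known to exist, which Jerison--Lee guarantee only when $\lambda(M,H)<\lambda(S^{2n+1})$ --- exactly the case you have not yet excluded, so the argument is circular at the borderline. Second, and more seriously, the rigidity statement ``$\lambda(M,H)=\lambda(S^{2n+1})$ only if $(M,H)$ is CR equivalent to the sphere'' is not what Frank--Lieb prove: they classify extremisers of the sharp inequality \emph{on the sphere and Heisenberg group}. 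What you need is the CR analogue of the Aubin--Schoen resolution of the critical case of the Yamabe problem (a CR positive mass theorem), which is not established in general dimensions; without it, your route cannot rule out $\lambda(M)=\lambda(S^{2n+1})$ with $M\not\cong S^{2n+1}$ and $Y(\theta)>Y(\theta_0)$. The elementary route, signposted by the remark immediately following the corollary that Einstein pseudohermitian manifolds are exactly the $\eta$-Sasaki--Einstein manifolds, is to dispose of the case $P=0$ trivially and otherwise rescale $\theta$ by a constant (which changes neither the Einstein condition nor $\int_MQ^\prime$) so that the associated Sasakian metric $g$ satisfies $\Ric(g)=2ng$; Bishop's volume comparison gives $\vol(M)\leq\vol(S^{2n+1})$ with equality if and only if $(M,g)$ is isometric to the round sphere, and since with this normalisation $\int_MQ^\prime=(n!)^2(4P/n)^{n+1}\vol(M,\theta)$ with $P$ equal to its value on the standard sphere, both \eqref{eqn:positivity} and its rigidity follow (a Sasaki--Einstein structure on the round sphere being conjugate under the isometry group to the standard one). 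I recommend replacing the Yamabe-theoretic step with this volume comparison.
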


In three dimensions, the conclusions of Corollary~\ref{cor:positivity}
are true under weaker hypotheses involving only the CR Paneitz
operator and the CR Yamabe constant~\cite{CaseYang2012}.  It is
natural to ask if similar positivity results extend to higher
dimensions; Corollary~\ref{cor:positivity} suggests that there is
scope for such a result.  Reasons to be interested in such a result
are its characterization of the standard CR sphere and its role in finding metrics of
constant $Q^\prime$-curvature by variational methods
(cf.\ \cite{CaseHsiaoYang2014}).

In five dimensions, it is straightforward to produce from Theorem~\ref{thm:general_formula} an explicit tractor formula for the CR GJMS operators, the $P^\prime$-operator, and the $Q^\prime$-curvature.  In particular, we obtain an explicit local formula for the $Q^\prime$-curvature in this dimension:
\begin{multline}
 \label{eqn:dim5_qprime}
 Q^\prime = 4\Delta_b^2 P + 4\Delta_b\lv A_{\alpha\beta}\rv^2 - 16\Imaginary\nabla^\gamma\left(A_{\beta\gamma}\nabla^\beta P\right) + 16i\nabla^{\gamma}Y_{\gamma} \\ - 16\Delta_b P^2 - 32P\lv A_{\alpha\beta}\rv^2 + 32P^3 - 16A^{\alpha\gamma}Q_{\alpha\gamma} ;
\end{multline}
see Section~\ref{sec:bg} for a description of our notation.  We thus
obtain a formula for the total $Q^\prime$-curvature of a compact
pseudo-Einstein five-manifold (cf.\ \eqref{eqn:dim5_total_qprime} and
\cite{HirachiMarugameMatsumoto2015}).  On the other hand,
Marugame~\cite{Marugame2013} has computed the Burns--Epstein invariant
in this setting.  By comparing these formulae, we obtain the following
Gauss--Bonnet formula for bounded strictly pseudoconvex domains in
$\bC^3$.


\begin{thm}
 \label{thm:marugame}
 Let $X\subset\bC^3$ be a bounded strictly pseudoconvex domain with boundary $M^5=\partial X$.  Let $\rho$ be a defining function for $M$ such that $g=-i\log\dbar\partial\log\rho$ is a complete K\"ahler--Einstein metric in $X$.  Then
 \begin{equation}
  \label{eqn:marugame}
  \chi(X) = \int_X \left(c_3 - \frac{1}{2}c_1c_2 + \frac{1}{8}c_1^3\right) + \frac{1}{\pi^3}\int_M \left( Q^\prime + 16\mI^\prime \right) ,
 \end{equation}
 where $\mI^\prime$ is the pseudohermitian invariant
 \begin{equation}
  \label{eqn:fg6_invariant}
  \mI^\prime = -\frac{1}{8}\Delta_b\left|S_{\alpha\bar\beta\gamma\bar\sigma}\right|^2 + \lv V_{\alpha\bar\beta\gamma}\rv^2 + \frac{1}{2}P\lv S_{\alpha\bar\beta\gamma\bar\delta}\rv^2 .
 \end{equation}
\end{thm}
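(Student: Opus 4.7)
The proof proceeds by comparing two independently computed local expressions on the pseudo-Einstein five-manifold $M = \partial X$, where the Fefferman defining function $\rho$ induces the Cheng--Yau pseudo-Einstein contact form $\theta = i(\partial - \dbar)\rho|_{TM}$. On one hand, Marugame~\cite{Marugame2013} has written the Burns--Epstein invariant in dimension five as a Chern--Gauss--Bonnet type formula
\[
 \chi(X) = \int_X \left(c_3 - \tfrac{1}{2}c_1c_2 + \tfrac{1}{8}c_1^3\right) + \int_M \mB,
\]
where $\mB$ is an explicit local pseudohermitian invariant on $M$. On the other hand, our Theorem~\ref{thm:general_formula} provides, via the tractor algorithm, the pointwise formula~\eqref{eqn:dim5_qprime} for $Q^\prime$ on any five-dimensional pseudo-Einstein manifold. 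Thus Theorem~\ref{thm:marugame} reduces to the pointwise identity
\[
 \pi^3 \mB \,\equiv\, Q^\prime + 16\mI^\prime \pmod{\text{total divergences}}
\]
on any embedded pseudo-Einstein 5-manifold.

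To establish this identity I would proceed as follows. First, specialise the tractor algorithm of Theorem~\ref{thm:general_formula} to $n=2$, $k=n+1=3$, carry out the resulting calculation in a local pseudohermitian frame, and collect terms to arrive at~\eqref{eqn:dim5_qprime}. Second, expand Marugame's integrand $\mB$ using the pseudo-Einstein condition $P_{\alpha\bar\beta} = P h_{\alpha\bar\beta}$, which permits one to split the Chern--Moser curvature into its pure trace (proportional to $P$), its trace-free second Chern component $V_{\alpha\bar\beta\gamma}$, and the fully trace-free piece $S_{\alpha\bar\beta\gamma\bar\sigma}$. Third, integrate by parts repeatedly on $M$ to reduce both $\pi^3\mB$ and $Q^\prime + 16\mI^\prime$ to a canonical form in which each term is one of: a scalar divergence, a cubic/quartic curvature monomial in $P$, $A_{\alpha\beta}$, $Y_\gamma$, and $Q_{\alpha\gamma}$ matching~\eqref{eqn:dim5_qprime}, or one of the three invariants $\Delta_b|S|^2$, $|V|^2$, $P|S|^2$ appearing in~\eqref{eqn:fg6_invariant}. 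Once both sides are put into this normal form, the coefficients must agree term-by-term.

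The main obstacle is purely computational bookkeeping. Both $\mB$ and~\eqref{eqn:dim5_qprime} are lengthy polynomials in the curvature, torsion, and their covariant derivatives, and matching them requires systematic use of the commutation relations for $\nabla_\alpha, \nabla_{\bar\beta}$, and $\nabla_0$; the pseudohermitian Bianchi identities (which on a pseudo-Einstein manifold reduce several divergences of curvature tensors to derivatives of $P$ and $A$); and the identity relating $\nabla^\gamma Y_\gamma$ to lower-order quantities in the pseudo-Einstein setting. A useful consistency check at each stage is to track the pseudohermitian scaling weight of each term: since $\theta$ varies within the pseudo-Einstein class by $\hat\theta = e^\Upsilon\theta$ with $\Upsilon\in\mP$, every monomial appearing in the canonical form must have weight $-n-1 = -3$, and both sides must transform compatibly with~\eqref{eqn:qprime_trans_intro} up to exact forms, which both restricts and verifies the combinatorics.
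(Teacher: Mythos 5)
Your proposal is correct and follows essentially the same route as the paper: one derives the local formula for $Q^\prime$ in dimension five from the tractor algorithm (Proposition~\ref{prop:dim5_tractor} and Corollary~\ref{cor:dim5_qprime}), integrates it using the divergence identities of Lemma~\ref{lem:div_identities} to obtain~\eqref{eqn:dim5_total_qprime}, and compares directly with Marugame's expression~\eqref{eqn:marugame_burns_epstein} for the Burns--Epstein invariant together with his Gauss--Bonnet formula~\eqref{eqn:marugame_gauss_bonnet}. The only cosmetic difference is that you phrase the comparison as a pointwise identity modulo divergences, whereas the paper works at the level of total integrals (quoting Marugame's integrand already in the convenient $P$, $A$, $S$, $V$ form), which amounts to the same thing.
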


In~\eqref{eqn:fg6_invariant}, $S_{\alpha\bar\beta\gamma\bar\delta}$
denotes the Chern tensor --- the completely tracefree part of the
pseudohermitian curvature --- and $V_{\alpha\bar\beta\gamma}$ is the
CR analogue of the Cotton tensor; see Section~\ref{sec:bg} for
details.  The pseudohermitian invariant $\mI^\prime$ should be
regarded as the analogue in the critical dimension of the nontrivial
conformal invariant of weight $-6$ discovered by Fefferman and Graham
(cf.\ \cite[(9.3)]{FeffermanGraham2012}).  More precisely, there is a
CR invariant $\mI$ of weight $-3$ and of the form $\lv\nabla_\rho
S_{\alpha\bar\beta\gamma\bar\delta}\rv^2$ plus terms involving
$V_{\alpha\bar\beta\delta}$ (see~\eqref{eqn:mI}) in general dimensions
which is a pure divergence in dimension five.  Arguing by analytic
continuation in the dimension yields, modulo divergences, the
pseudohermitian invariant $\mI^\prime$ on five-dimensional
pseudo-Einstein manifolds; in particular, one expects the total
$\mI^\prime$-curvature to be a global \invariant/.  In
Proposition~\ref{prop:iprime}, we give an intrinsic proof of this fact
provided $c_2(H^{1,0})$ vanishes in $H^4(M;\bR)$.  If $M$ is the boundary of a Stein manifold, then $c_2(H^{1,0})=0$; see Section~\ref{sec:dim5} for details.  Note that Marugame
has already given an extrinsic proof~\cite{Marugame2013} of this fact,
without assuming the vanishing of the second real Chern class.  Our
study of $\mI^\prime$ suggests that the CR analogue of the
Deser--Schwimmer conjecture is more subtle than its conformal
analogue; see Remark~\ref{rk:deser-schwimmer} for further discussion.

We conclude this introduction by outlining the algorithm for producing
tractor formulae for the CR GJMS operators, the $P^\prime$-operator,
and the $Q^\prime$-curvature contained in
Theorem~\ref{thm:general_formula} and how it is applied to obtain
Theorem~\ref{thm:factorisation}.  To that end, we first recall the
definitions of these objects via the ambient
manifold~\cite{Hirachi2013}.

Suppose that $(M^{2n+1},H)$ is a strictly pseudoconvex CR manifold
which is embedded in a complex manifold $X^{n+1}$; note that compact strictly pseudoconvex CR manifolds of dimension at least five are automatically embeddable~\cite{Boutet1975,HarveyLawson1977,Lempert1995}.  Let $\rho\in
C^\infty(X)$ be a defining function for $M$ which is positive on the
pseudoconvex side, and let $\theta=\Imaginary\dbar\rho\rv_{TM}$ be the
induced contact form.  Suppose that, near $M$ in $X$, there is a
$(n+2)$-nd root $\mL_X$ of the canonical bundle $\mK_X$ of $X$.  The
\emph{ambient space of $M$}, which we denote $\MA$,  is the total space of
$\mL_X\setminus\{0\}\to X$, and the restriction
$\MA\rv_M$ is denoted by $\mF$.  Note that $\mF$ is a CR
manifold of (real) dimension $2n+3$ with Levi form which is positive
definite except in the fibre direction, and the pullback of $\rho$ to
$\MA$, also denoted by $\rho$, is a defining function for
$\mF$.


Given $\lambda\in\bC^\ast$, define the \emph{dilation} $\delta_\lambda\colon\MA\to\MA$ by fibre-wise scalar multiplication, $\delta_\lambda(\xi)=\lambda\xi$.  Given $w\in\bR$, denote
\[ \cmE(w) = \left\{ f\in C^\infty(\MA;\bC) \colon \delta_\lambda^\ast f = \lv\lambda\rv^{2w}f \text{ for all $\lambda\in\bC^\ast$} \right\} . \]
A natural choice of defining function $\rr\in\cmE(1)$ for $\mF$ is obtained by Fefferman's construction~\cite{Fefferman1976}: it is the unique defining function modulo $O(\rho^{n+3})$ such that
\begin{equation}
 \label{eqn:ambient_metric_ricci}
 \Ric[\rr] = i\eta \rr^n\d \rr\wedge\dbar \rr + O(\rho^{n+1}) ,
\end{equation}
where $\Ric[\rr]$ is the Ricci curvature of the \emph{ambient metric} $\cg[\rr]=-i\d\dbar \rr$ defined in a neighborhood of $\mF$ in $\MA$ and $\eta\rv_{\mF}$ is a CR invariant, the \emph{obstruction function}.

Let $\mK=\Lambda^{n+1}(H^{0,1})^\perp$ denote the canonical bundle of $M$.  Note that $\mK=\mK_X\rv_M$.  Let $\mL_M=\mL_X\rv_M$, so that $\mL_M$ is a $(n+2)$-nd root of $\mK$.  Given $w,w^\prime\in\bC$ such that $w-w^\prime\in\bZ$, we denote
\[ \mE(w,w^\prime) = \mL_M^{-w}\otimes\omL_M^{-w^\prime} . \]
A \emph{CR density of weight $w\in\bR$} is a smooth section of the bundle $\mE(w)=\mE(w,w)$.  When clear by context, we also use $\mE(w)$ to denote the space of CR densities of weight $w$.  Given a homogeneous function $\cf\in\cmE(w)$ on the ambient space, its restriction to $\mF$ defines a CR density $f=\cf\rv_{\mF}\in\mE(w)$.  We call $\cf$ an \emph{ambient extension of $f$}.  Such functions are unique up to adding terms of the form $\phi \rr$ with $\phi\in\cmE(w-1)$.

Let $k\in\{0,1,\dotsc,n+1\}$ and set $w=-\frac{n+1-k}{2}$.  Given $f\in\mE(w)$, define
\[ P_{2k}f := (-2\cDelta)^k\cf\rv_{\mF} . \]
This definition is independent of the choice of ambient extension $\cf$. In particular, $P_{2k}\colon\mE(w)\to\mE(w-k)$ is a conformally covariant operator, the $k$-th order CR GJMS operator~\cite{GoverGraham2005}.  Our normalization is such that $P_{2k}$ has leading order term $(-\Delta_b)^k$.

Set $h_\theta=\rr/\rho$.  Define the $P$-prime operator $P_{2n+2}^\prime$ on $\mP$ by
\begin{equation}
 \label{eqn:ambient_pprime}
 P_{2n+2}^\prime f = -(-2\cDelta)^{n+1}\left(f\log h_\theta\right)\rv_{\mF} \in \mE(-n-1) .
\end{equation}
This operator depends only on $f$ and the choice of contact form $\theta$.  Moreover, computing with respect to the contact form $\htheta=e^\Upsilon\theta$ yields the transformation formula~\eqref{eqn:pprime_trans_intro}.

Suppose now that $\theta$ is a pseudo-Einstein contact form; equivalently, suppose that $\log h_\theta\rv_{\mF}\in\mP$.  Define the $Q$-prime curvature $Q_{2n+2}^\prime$ by
\begin{equation}
 \label{eqn:ambient_qprime}
 Q_{2n+2}^\prime = \frac{1}{2}(-2\cDelta)^{n+1}\left(\log h_\theta\right)^2\rv_{\mF} \in \mE(-n-1) .
\end{equation}
This scalar depends only on the choice of contact form $\theta$.  Moreover, computing with respect to the pseudo-Einstein contact form $\htheta=e^\Upsilon\theta$ yields the transformation formula~\eqref{eqn:qprime_trans_intro}.

An alternative approach to these definitions can be made through the
CR tractor calculus~\cite{GoverGraham2005}.  Specifically, \v{C}ap and
the second-named author~\cite{CapGover2002,CapGover2008} have provided
a dictionary which effectively equates definitions of CR invariant
objects made via the ambient metric with definitions made via the CR
tractor calculus.  Using this dictionary, we develop in
Section~\ref{sec:algorithm} an algorithm for generating tractor
formulae for the CR GJMS operators, the $P^\prime$-operators, and the
$Q^\prime$-curvatures in general dimension.  This has two benefits.
First, it is easy to execute this algorithm in low dimensions, and
this allows us to derive~\eqref{eqn:dim5_qprime}; see
Section~\ref{sec:dim5} for further discussion.  Second, the algorithm
almost immediately yields Theorem~\ref{thm:factorisation} using the
local correspondence between Einstein contact forms and parallel CR
standard tractors.  More precisely, the algorithm leads to a formula
(cf.\ Theorem~\ref{thm:general_formula}) for the CR GJMS operators in
terms of tractor $D$-operators and the CR Weyl tractor, a tractor
version of the curvature tensor of the ambient metric.  Since
contractions of a parallel CR standard tractor $I_A$ into the
curvature necessarily vanish, we obtain a formula for the CR GJMS
operators in terms of compositions of $I^AI^{\bar B}\bD_A\bD_{\bar B}$
and $I^{\bar B}\bD_{\bar B}$ which, after some reorganization,
recovers~\eqref{eqn:factorisation_P} (cf.\ \cite{Gover2006}).  The
factorisations for $P^\prime$ and $Q^\prime$ then follow from the
``Branson trick,'' made rigorous using log densities in a manner
analogous to the ambient definitions~\eqref{eqn:ambient_pprime}
and~\eqref{eqn:ambient_qprime}; see Section~\ref{sec:pluri} and
Section~\ref{sec:algorithm} for further discussion.

\section*{Acknowledgments} The authors would like to thank the Centre de Recerca Matem\`atica and Princeton University for their hospitality while portions of this work were being completed.  They would also like to thank Paul Yang for many fruitful discussions about the $P^\prime$-operator and the $Q^\prime$-curvature, and Taiji Marugame for pointing out Proposition~\ref{prop:stein_vanish} and supplying its proof.

\section{Background}
\label{sec:bg}

\subsection{CR geometry}

\newcommand{\Ga}{\Gamma}
\def\Cal{\mathcal}
\newcommand{\x}{\times}
\renewcommand{\th}{\theta}
\newcommand{\Le}{{\Cal L^{\theta}}}
\newcommand{\ol}[1]{\overline{#1}}
\newcommand{\al}{\alpha}
\newcommand{\be}{\beta}
\newcommand{\ce}{{\Cal E}}
\newcommand{\beb}{{\overline{\beta}}}
\newcommand{\alb}{{\overline{\alpha}}}
\newcommand{\bh}{{\boldsymbol h}}
\newcommand{\nd}{\nabla}
\newcommand{\cQ}{\mathcal{Q}}

Recall that an \textit{almost CR structure}, of hypersurface type, on
a smooth manifold $M$ of real dimension $2n+1$ is a rank $n$ complex
subbundle $H$ of the tangent bundle $TM$. For simplicity, throughout
the following we assume $M$ is orientable.  We denote by $J:H\to H$
the almost complex structure on the subbundle. We write $q:TM\to \mC$
for the canonical bundle surjection onto the real (quotient) line
bundle $\mC:=TM/H$.  For two sections $\xi,\eta\in\Ga(H)$ the
expression $q([\xi,\eta])$ is bilinear over smooth functions, and so
there is a skew symmetric bundle map $\Cal L:H\x H\to \mC$ given by
$\Cal L(\xi(x),\eta(x))= q([\xi,\eta](x))$. If this skew form is
non-degenerate then the almost CR structure is said to be
\textit{non-degenerate}; such non-deneracy exactly means that $H$ is a
contact distribution on $M$.

We shall write $\mathcal{B}_{\bC}$ for the complexification of a
real vector bundle $\mathcal{B}$.  Considering now $T_{\bC}M$ and  $H_{\bC}\subset T_{\bC}M$, the complex structure on $H$ is equivalent to a splitting of the
subbundle $H_{\bC}$ into the direct sum of the holomorphic part
$H^{1,0}$ and the antiholomorphic part
$H^{0,1}=\overline{H^{1,0}}$. The almost CR structure is called
\textit{integrable}, or a \textit{CR structure}, if the subbundle
$H^{1,0}\subset T_{\bC}M$ is involutive; i.e.~the space of its
sections is closed under the Lie bracket.
Then, in particular,  $\Cal L$ is of  type
$(1,1)$, meaning $\Cal L(J\xi,J\eta)=\Cal L(\xi,\eta)$ for all
$\xi,\eta\in H$. We assume integrability.

Let $q_{\bC}$ denote the complex linear extension of $q$. The
\textit{CR Levi form} $\Cal L_{\bC}$ of an almost CR structure is the
$\mC_{\bC}$-valued Hermitian form on $H^{1,0}$ induced by
$(\xi,\eta)\mapsto 2iq_{\bC}([\xi,\overline{\eta}])$. Note that
$\Cal L$ can be naturally identified with the imaginary part of $\Cal
L_{\bC}$, and so non-degeneracy of the
CR structure can be characterised by non-degeneracy of
the Levi form.

Choosing a local trivialisation of $\mC$ and using the induced
trivialisation of $\mC_{\bC}$, $\Cal L_{\bC}$ gives rise to a
Hermitian form. If $(p,q)$ is the signature of this form, then one
also says that $M$ is non-degenerate of signature $(p,q)$. If $p\neq
q$, then such local trivialisations of $\mC$ necessarily fit
together to give a global trivialisation. In the case of symmetric
signature $(p,p)$ we assume that a global trivialisation of $\mC$
exists. A global trivialisation of $\mC$ is equivalent to a ray subbundle
of the line bundle of contact forms for $H\subset TM$, so it gives a
notion of positivity for contact forms.

An important class of CR structures are those which arise from generic
real hypersurfaces in complex manifolds, as follows. Let $\Cal M$ be a
complex manifold of complex dimension $n+1$ and let $M\subset\Cal M$
be a smooth real hypersurface. For each point $x\in M$, the tangent
space $T_xM$ is a subspace of the complex vector space $T_x\Cal M$ of
real codimension one. This implies that the maximal complex subspace
$H_x$ of $T_xM$ must be of complex dimension $n$. These subspaces fit
together to define a smooth subbundle $H\subset TM$, equipped with a
complex structure. Since the bundle $H^{1,0}\subset T_{\bC}M$ can
be viewed as the intersection of the involutive subbundles $T_{\bC}M$ and $T^{1,0}\Cal M$ of $T_{\bC}\Cal M|_M$ we see that we
always obtain a CR structure in this way. Generically this structure
is non-degenerate, and in this case is referred to as an
\textit{embedded CR manifold}.

\subsection{CR density bundles} \label{dens}

In CR geometry an important role is played by a natural family of
line bundles that arise as follows. In the complexified cotangent bundle
 the annihilator of
$H^{0,1}$ has complex dimension
$n+1$, and so its $(n+1)$st complex exterior power is a complex line
bundle $\Cal K$; this is the {\em canonical bundle}.

It is convenient to assume the existence of certain roots of $\Cal
K$. Specifically we assume that there exists, and we have chosen,
a complex line bundle $\Cal E(1,0)\to M$ with the property that there is
a duality between $\Cal E(1,0)^{\otimes^{(n+2)}}$ and the canonical
bundle $\Cal K$. Such a bundle may not exist  globally, but such a choice is always possible locally. For CR manifolds embedded in $\bC^{n+1}$ the canonical bundle is trivial, so such a bundle $\Cal
E(1,0)$ exists globally in this setting. For $w,w'\in\bR$ such that
$w'-w\in\bZ$, the map $\lambda\mapsto
|\lambda|^{2w}\overline{\lambda}^{(w'-w)}$ is a well-defined
one-dimensional representation of $\bC^*$. Hence we can define a
complex line bundle $\Cal E(w,w')$ over $M$ by forming the associated
bundle to the frame bundle of $\Cal E(1,0)$ with respect to this
representation. By construction we get $\Cal E(w',w)=\overline{\Cal
  E(w,w')}$, $\Cal E(-w,-w')=\Cal E(w,w')^*$ and $\Cal E(k,0)=\Cal
E(1,0)^{\otimes^k}$ for $k\in\bN$. Finally, by definition $\Cal
K\cong\Cal E(0,-n-2)$.

\subsection{Pseudohermitian structures}\label{pseu}

For the purposes of explicit calculations on a CR manifold $(M,H)$ it
is convenient to use pseudohermitian structures, and we review some
basic facts about these. This also serves to fix conventions,
which follow \cite{GoverGraham2005}.  Since $M$ is
orientable the annihilator $H^{\perp}$ of $H$
in $T^*M$ admits a nonvanishing global section. A
\textit{pseudohermitian structure} is a choice $\th$ of such a section
and, from the non-degeneracy of the CR structure, is a contact form on
$M$. We fix an orientation on $H^{\perp}$ and restrict consideration
to choices of $\theta$ which are positive with respect to this orientation.
The \textit{Levi form} of $\theta$ is the Hermitian form $h^\theta$
(or simply $h$) on $H^{1,0}\subset T_{\bC}M$ defined by
$$
h (Z,{\ol{W}})=-2id\theta (Z,{\ol{W}}) .
$$
With the trivialisation of $\mC_{\bC}$ given by $\th$, this corresponds to $\Cal L^{\bC}$ introduced above.

Given a pseudohermitian structure $\theta$, we define the \textit{Reeb
  field} $T$ to be the unique vector field on $M$ satisfying
\begin{equation}\label{Tnorm}
\theta (T)=1 ~~~{\rm and}~~~i_Td\theta=0.
\end{equation}
An \textit{admissible coframe} is a set of complex valued forms $\{
\theta^{\al}\}$, $\al =1,\dotsc ,n$, which satisfy $\theta^{\al}(T)=0$
and whose restrictions to $H^{1,0}$ are complex linear and form a
basis for $(H^{1,0})^*$.  We use lower case Greek indices to
refer to frames for $T^{1,0}$ or its dual.  We shall also interpret
these indices abstractly, and use $\ce^{\al}$ as an abstract index
notation for the bundle $H^{1,0}$ (or its space of smooth sections)
and write $\ce_{\al}$ for its dual.  This notation is extended in an
obvious way to the conjugate bundles, and to tensor products of
various of these.

There is a natural inclusion of the real line bundle $\mC=TM/H$
into the density bundle $\ce(1,1)$ which is defined as follows. For a local
nonzero section $\al$ of $\ce(1,0)$ recall that one can, by
definition, view $\al^{-(n+2)}$ as a section of the canonical bundle
$\Cal K$. Then, by \cite[Lemma 3.2]{Lee1986}, there is a unique
positive contact form $\th$ with respect to which $\al^{-(n+2)}$ is
length normalised. From the formula in \cite[Lemma 3.2]{Lee1986}
one sees that, in the other direction, $\theta$ determines $\al$ up a phase
factor, and scaling $\theta$ causes the inverse scaling of $\alpha\ol{\al}$.
Thus the mapping $TM\ni\xi\mapsto\th(\xi)\al\overline{\al}$
descends to an inclusion of $\mC$ into $\mE(1,1)$ which, by construction, is CR
invariant.   A \emph{scale} $\alpha\bar\alpha\in\mE(1,1)$ is a section of the image of $\mC$ in $\mE(1,1)$ under this inclusion.

 By integrability and \eqref{Tnorm}, we have
$$
d\theta = ih_{\al\beb} \theta^{\al}\wedge \theta^{\beb}
$$ for a smoothly varying Hermitian matrix $h_{\al\beb}$, which we may
interpret as the matrix of the Levi form $h$ determined by $\theta$,
in the frame $\theta^{\al}$, or as the Levi form $h$ itself in
abstract index notation. Using the inclusion
$\mC\hookrightarrow\ce(1,1)$ from above, the CR Levi form $\Cal
L^{\bC}$ can be viewed as a canonical section of
$\ce_{\al\beb}(1,1)$ which we also denote by $\bh_{\al\beb}$; this
agrees with $h_{\al\beb}$ if $\ce(1,1)$ is trivialised using $\theta$.  By
$\bh^{\al\beb}\in\ce^{\al\beb}(-1,-1)$ we denote the inverse of
$\bh_{\al\beb}$ and this will be used to raise and lower indices
without further mention.

By $\nabla$ we denote the \textit{Tanaka--Webster connections} (on
various bundles) associated to $\th$. In particular, these satisfy
$\nabla\th=0$, $\nabla h=0$, $\nd \bh=0$, $\nabla T=0$, and $\nd J=0$,
so the decomposition $T_{\bC}M=H^{1,0}M\oplus H^{0,1}M\oplus\bC
T$ is invariant under $\nabla$.  On tensors, the Tanaka--Webster connection is determined from the Webster connection forms $\omega_\alpha{}^\beta$ and the torsion forms $\tau_\gamma=A_{\alpha\gamma}\theta^\alpha$, defined in terms of an admissible coframe by
\begin{align*}
 d\theta^\alpha & = \theta^\beta\wedge\omega_\beta{}^\alpha + \theta\wedge\tau^\alpha, \\
 dh_{\alpha\bar\beta} & = \omega_{\alpha\bar\beta} + \omega_{\bar\beta\alpha} , \\
 A_{\alpha\gamma} & = A_{\gamma\alpha} .
\end{align*}
We call $A_{\alpha\gamma}$ the \emph{torsion} of $\theta$.  The \emph{pseudohermitian curvature} $R_{\alpha\bar\beta\gamma\bar\sigma}$ of $\theta$ is obtained from the curvature forms $\Pi_\alpha{}^\beta=d\omega_\alpha{}^\beta-\omega_\alpha{}^\gamma\wedge\omega_\gamma{}^\beta$ via the structure equations
\begin{multline}
 \label{eqn:structure_equation}
 \Pi_\alpha{}^\beta = R_\alpha{}^\beta{}_{\mu\bar\nu}\theta^\mu\wedge\theta^{\bar\nu} + \nabla^\beta A_{\alpha\mu}\theta^\mu\wedge\theta - \nabla_\alpha A_{\bar\nu}{}^\beta\theta^{\bar\nu}\wedge\theta \\ + ih_{\alpha\bar\nu}A_{\bar\sigma}{}^\beta\theta^{\bar\nu}\wedge\theta^{\bar\sigma} - iA_{\alpha\mu}\theta^\mu\wedge\theta^\beta
\end{multline}
The \emph{pseudohermitian Ricci tensor} is $R_{\alpha\bar\beta}=R_{\alpha\bar\beta\gamma}{}^\gamma$ and the \emph{pseudohermitian scalar curvature} is $R=R_\gamma{}^\gamma$.  The \emph{sublaplacian} is $\Delta_b=\nabla^\gamma\nabla_\gamma + \nabla_\gamma\nabla^\gamma$.

A contact form $\theta$ on $(M^{2n+1},H)$ is \emph{pseudo-Einstein} if
\[ \begin{cases}
    R_{\alpha\bar\beta} = \frac{1}{n}Rh_{\alpha\bar\beta}, & \text{if $n>1$} \\
    \nabla_\alpha R = i\nabla^\gamma A_{\alpha\gamma}, & \text{if $n=1$} .
   \end{cases} \]
The set of pseudo-Einstein contact forms, when non-empty, forms a distinguished class of contact forms parameterised by $\mP$: If $\theta$ is pseudo-Einstein, then $\htheta=e^\Upsilon\theta$ is pseudo-Einstein if and only if $\Upsilon$ is a CR pluriharmonic function~\cite{Hirachi1990,Lee1988}.

We may decompose a tensor field relative to the splittings of $T_{\bC}M$ and its dual.  In this way, we may calculate the covariant derivative componentwise.  Each of the components may be regarded as a section of a tensor product of $\mE^\alpha$ or its dual or conjugates thereof.  We therefore often restrict consideration to the action of the connection on $\mE^\alpha$ or $\mE_\alpha$.  We use indices $\alpha,\overline{\alpha},0$ for components with respect to the frame $\{\theta^\alpha,\theta^{\bar\alpha},\theta\}$ and its dual, so that the $0$-components incorporate weights.  If $f$ is a (possibly density-valued) tensor field, we denote components of the (tensorial) iterated covariant derivatives of $f$ in such a frame by preceding $\nabla$'s; e.g.\ $\nabla_\alpha\nabla_0\dotsm\nabla_{\bar\beta}$.  As usual, such indices may alternately be interpreted abstractly.  For example, if $f_\beta\in\mE_\beta(w,w^\prime)$, we consider $\nabla f$ as the triple $\nabla_\alpha f_\beta\in\mE_{\alpha\beta}(w,w^\prime)$, $\nabla_{\bar\alpha}f_\beta\in\mE_{\bar\alpha\beta}(w,w^\prime)$, $\nabla_0f_\beta\in\mE_\beta(w-1,w^\prime-1)$.

From the standpoint of CR geometry, it is convenient to consider certain modifications of the curvature $R_{\alpha\bar\beta\gamma\bar\sigma}$ and its traces.  The \emph{pseudohermitian Schouten tensor} is defined by
\[ P_{\alpha\bar\beta} = \frac{1}{n+2}\left( R_{\alpha\bar\beta} - Ph_{\alpha\bar\beta} \right), \]
where $P=R/2(n+1)$ is its trace.  To describe the tractor connection, it is convenient to introduce the tensors
\begin{align*}
 T_\alpha & = \frac{1}{n+2}\left(\nabla_\alpha P - i\nabla^\gamma A_{\alpha\gamma}\right), \\
 S & = -\frac{1}{n}\left(\nabla^\alpha T_\alpha + \nabla_\alpha T^\alpha + P_{\alpha\bar\beta}P^{\alpha\bar\beta} - A_{\alpha\gamma}A^{\alpha\gamma}\right)
\end{align*}
(cf.\ \cite{GoverGraham2005,Lee1986}).  The \emph{Chern tensor} is defined by
\[ S_{\alpha\bar\beta\gamma\bar\sigma} = R_{\alpha\bar\beta\gamma\bar\sigma} - P_{\alpha\bar\beta}h_{\gamma\bar\sigma} - P_{\alpha\bar\sigma}h_{\gamma\bar\beta} - P_{\gamma\bar\beta}h_{\alpha\bar\sigma} - P_{\gamma\bar\sigma}h_{\alpha\bar\beta} . \]
This tensor is the analogue of the Weyl tensor, in that it is CR invariant, has Weyl-type symmetries, and, when $n\geq5$, is the obstruction to $(M^{2n+1},H)$ being locally equivalent to the standard CR $(2n+1)$-sphere~\cite{ChernMoser1974}.  Some other important curvature tensors, which together constitute the curvature of the CR tractor connection~\cite{GoverGraham2005}, are
\begin{align*}
 V_{\alpha\bar\beta\gamma} & = \nabla_{\bar\beta}A_{\alpha\gamma} + i\nabla_\gamma P_{\alpha\bar\beta} - iT_\gamma h_{\alpha\bar\beta} - 2iT_\alpha h_{\gamma\bar\beta} \\
 Q_{\alpha\gamma} & = i\nabla_0A_{\alpha\gamma} - 2i\nabla_\gamma T_\alpha + 2P_{\alpha}{}^\rho A_{\rho\gamma} \\
 U_{\alpha\bar\beta} & = \nabla_{\bar\beta}T_\alpha + \nabla_\alpha T_{\bar\beta} + P_\alpha{}^\rho P_{\rho\bar\beta} - A_{\alpha\rho} A^\rho{}_{\bar\beta} + Sh_{\alpha\bar\beta} \\
 Y_\alpha & = \nabla_0T_\alpha - i\nabla_\alpha S + 2iP_\alpha{}^\rho T_\rho - 3A_{\alpha\rho}T^\rho .
\end{align*}
Note that $V_{\alpha\bar\beta\gamma}$ and $U_{\alpha\bar\beta}$ are tracefree; this follows from the definitions of $T_\alpha$ and $S$, respectively.  Note also that $V_{\alpha\bar\beta\gamma}$ and $Q_{\alpha\gamma}$ are symmetric.  Indeed, \cite[(2.10)]{Lee1988} implies that
\begin{equation}
 \label{eqn:V_symmetry}
 \nabla_\gamma P_{\alpha\bar\beta} - \nabla_\alpha P_{\gamma\bar\beta} = T_\alpha h_{\gamma\bar\beta} - T_\gamma h_{\alpha\bar\beta}
\end{equation}
and~\cite[(2.9)]{Lee1988} implies that
\begin{equation}
 \label{eqn:Q_symmetry}
 \nabla_\alpha T_\gamma - \nabla_\gamma T_\alpha = iP_\alpha{}^\rho A_{\rho\gamma} - iP_\gamma{}^\rho A_{\rho\alpha} ;
\end{equation}
these equations imply $V_{\alpha\bar\beta\gamma}=V_{\gamma\bar\beta\alpha}$ and $Q_{\alpha\gamma}=Q_{\gamma\alpha}$, respectively.  When $n=1$, the \emph{Cartan tensor} $Q_{\alpha\beta}$ is CR invariant and is the obstruction to $(M^3,H)$ being locally CR equivalent to the standard CR three-sphere~\cite{Cartan1932b,Cartan1932a}.  These curvature tensors are all related via simple divergence formulae:

\begin{lem}
 \label{lem:div_identities}
 Let $(M^{2n+1},H,\theta)$ be a pseudohermitian manifold.  Then
 \begin{align}
  \label{eqn:divS} \nabla^{\bar\sigma} S_{\alpha\bar\beta\gamma\bar\sigma} & = -inV_{\alpha\bar\beta\gamma} , \\
  \label{eqn:divV} \nabla^{\bar\beta} V_{\alpha\bar\beta\gamma} & = -(n-1)Q_{\alpha\gamma} + S_{\alpha\bar\beta\gamma\bar\sigma} A^{\bar\beta\bar\sigma} , \\
  \label{eqn:divbarV} \nabla^\gamma V_{\alpha\bar\beta\gamma} & = niU_{\alpha\bar\beta} - iS_{\alpha\bar\beta\gamma\bar\sigma}P^{\gamma\bar\sigma} , \\
  \label{eqn:divQ} \nabla^\gamma Q_{\alpha\gamma} & = -nY_\alpha + 2V_{\alpha\bar\beta\gamma}P^{\gamma\bar\beta} , \\
  \label{eqn:divU} \nabla^{\bar\beta} U_{\alpha\bar\beta} & = -(n-1)iY_\alpha + iV_{\alpha\bar\beta\gamma}P^{\gamma\bar\beta} + V_{\bar\beta\alpha\bar\sigma}A^{\bar\beta\bar\sigma}, \\
  \label{eqn:RedivY} \Real\nabla^\gamma Y_\gamma & = \Imaginary A^{\alpha\gamma}Q_{\alpha\gamma} .
 \end{align}
\end{lem}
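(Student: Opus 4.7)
The plan is to verify each of the six divergence identities by directly expanding the definitions of $S_{\alpha\bar\beta\gamma\bar\sigma}$, $V_{\alpha\bar\beta\gamma}$, $Q_{\alpha\gamma}$, $U_{\alpha\bar\beta}$, and $Y_\alpha$ in terms of $R_{\alpha\bar\beta\gamma\bar\sigma}$, $P_{\alpha\bar\beta}$, $A_{\alpha\gamma}$, $T_\alpha$, and the scalar $S$, then applying the Tanaka--Webster commutation formulas together with the contracted second Bianchi identity derived by exterior differentiation of the structure equation \eqref{eqn:structure_equation}. All six identities are of the form: a top-order (covariant-derivative) term is rewritten as one or more other top-order terms by swapping derivative order modulo curvature, and the resulting lower-order curvature debris reassembles into the tensors on the right-hand side.

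The keystone is \eqref{eqn:divS}. I would first differentiate \eqref{eqn:structure_equation} to get the second Bianchi identity for $R_{\alpha\bar\beta\gamma\bar\sigma}$ in the form $\nabla_{\bar\sigma} R_{\alpha\bar\beta\gamma\bar\tau} - \nabla_{\bar\tau}R_{\alpha\bar\beta\gamma\bar\sigma} = (\text{torsion terms})$, contract with $\bh^{\gamma\bar\sigma}$ (or $\bh^{\bar\beta\gamma}$ as appropriate) and commute $\nabla$'s using the standard Tanaka--Webster commutators so as to produce $\nabla_\gamma R_{\alpha\bar\beta}$ plus $A$-torsion corrections. Substituting the definition $S_{\alpha\bar\beta\gamma\bar\sigma} = R_{\alpha\bar\beta\gamma\bar\sigma} - P_{\alpha\bar\beta}h_{\gamma\bar\sigma} - P_{\alpha\bar\sigma}h_{\gamma\bar\beta} - P_{\gamma\bar\beta}h_{\alpha\bar\sigma} - P_{\gamma\bar\sigma}h_{\alpha\bar\beta}$ and rewriting $R_{\alpha\bar\beta} = (n+2)P_{\alpha\bar\beta} + P\,h_{\alpha\bar\beta}$, one sees the combination $\nabla_{\bar\beta}A_{\alpha\gamma} + i\nabla_\gamma P_{\alpha\bar\beta} - iT_\gamma h_{\alpha\bar\beta} - 2iT_\alpha h_{\gamma\bar\beta}$ emerge (the $T$-pieces come from the definition $T_\alpha = \frac{1}{n+2}(\nabla_\alpha P - i\nabla^\gamma A_{\alpha\gamma})$), giving $-inV_{\alpha\bar\beta\gamma}$ as required.

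Identities \eqref{eqn:divV}, \eqref{eqn:divbarV}, and \eqref{eqn:divQ} are then obtained similarly by applying $\nabla^{\bar\beta}$ (respectively $\nabla^\gamma$) to the defining formula for $V_{\alpha\bar\beta\gamma}$ (respectively $Q_{\alpha\gamma}$). Each such divergence produces a second-derivative term like $\nabla^{\bar\beta}\nabla_{\bar\beta}A_{\alpha\gamma}$, $\nabla^{\bar\beta}\nabla_\gamma P_{\alpha\bar\beta}$, $\nabla^\gamma\nabla_{\bar\beta}A_{\alpha\gamma}$, $\nabla^\gamma\nabla_\gamma P_{\alpha\bar\beta}$, $\nabla^\gamma\nabla_0 A_{\alpha\gamma}$, and $\nabla^\gamma\nabla_\gamma T_\alpha$; the Tanaka--Webster commutators convert these into the correct mixed-order expressions $\nabla_0 A$, $\nabla T$, $\nabla S_{\text{scalar}}$, together with curvature corrections proportional to $P\,A$, $P\,T$, and contractions of $S_{\alpha\bar\beta\gamma\bar\sigma}$ with $P$ or $A$. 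Matching against the definitions of $Q_{\alpha\gamma}$, $U_{\alpha\bar\beta}$, and $Y_\alpha$ completes the identities. The symmetry relations \eqref{eqn:V_symmetry} and \eqref{eqn:Q_symmetry} are used to repackage the $\nabla P$ and $\nabla T$ terms symmetrically. Identity \eqref{eqn:divU} follows from the same recipe applied to the definition of $U_{\alpha\bar\beta}$, using \eqref{eqn:divV} and \eqref{eqn:divbarV} to handle the $P\,P$ and $A\,A$ cross terms. Finally, \eqref{eqn:RedivY} reduces to taking the real part of $\nabla^\gamma Y_\gamma$ and observing that the $\nabla_0\nabla\cdot T$ and $i\nabla\nabla S$ contributions are manifestly real (so vanish under $\Imaginary$), while the remaining terms reorganise via \eqref{eqn:divQ} into $\Imaginary(A^{\alpha\gamma}Q_{\alpha\gamma})$.

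The principal obstacle is bookkeeping: each commutator swap produces torsion and curvature terms with sign conventions (e.g.\ $[\nabla_\alpha,\nabla_{\bar\beta}]$ on weighted tensors involves both $R_{\alpha\bar\beta\gamma}{}^\delta$ and $ih_{\alpha\bar\beta}\nabla_0$), and careful attention must be paid to the weights of $A$, $T$, $P$, and $S$ to ensure every contraction is consistent. No deep ideas enter beyond the second Bianchi identity and the defining formulas, but the successful cancellations rely on the precise normalisations of $T_\alpha$, $S$, and the $-iT_\gamma h_{\alpha\bar\beta} - 2iT_\alpha h_{\gamma\bar\beta}$ trace-correction in $V_{\alpha\bar\beta\gamma}$.
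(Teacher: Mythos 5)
Your proposal is correct and follows essentially the same route as the paper: a direct verification of each identity by expanding the definitions, invoking the contracted Bianchi identities (which the paper quotes from Lee's~\cite[(2.6), (2.7), (2.9), (2.11), (2.12)]{Lee1988} rather than rederiving from the structure equation) together with the Tanaka--Webster commutator formulae of~\cite[Lemma~2.3]{Lee1988}, and then reassembling the curvature debris using the symmetry relations~\eqref{eqn:V_symmetry} and~\eqref{eqn:Q_symmetry} and the definitions of $T_\alpha$, $S$, $V$, $Q$, $U$, $Y$. The only presentational difference is that you propose to obtain the Bianchi identities afresh by differentiating~\eqref{eqn:structure_equation}, which is where they come from anyway.
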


\begin{proof}
 In terms of the pseudohermitian Schouten tensor, the Bianchi identity~\cite[(2.11)]{Lee1988} states that
 \begin{equation}
  \label{eqn:divP}
   \nabla^{\bar\beta}P_{\alpha\bar\beta} = \nabla_\alpha P + (n-1)T_\alpha .
 \end{equation}
 Using this and the Bianchi identity~\cite[(2.7)]{Lee1988}, it follows that
 \[ \nabla^{\bar\sigma}S_{\alpha\bar\beta\gamma\bar\sigma} = (n+1)\nabla_\alpha P_{\gamma\bar\beta} - \nabla_\gamma P_{\alpha\bar\beta} - in\nabla_{\bar\beta}A_{\alpha\gamma} - (2n+1)T_\gamma h_{\alpha\bar\beta} - (n-1)T_\alpha h_{\gamma\bar\beta} . \]
 Writing this in terms of $V_{\alpha\bar\beta\gamma}$ yields~\eqref{eqn:divS}.

 Using commutator formulae from~\cite[Lemma~2.3]{Lee1988}, we observe that
 \[ \nabla^{\bar\beta}\nabla_\gamma P_{\alpha\bar\beta} = \nabla_\gamma\nabla^{\bar\beta} P_{\alpha\bar\beta} + (n-1)iTP_{\alpha\bar\beta} A^{\bar\beta}{}_\gamma + iP_{\gamma\bar\beta} A^{\bar\beta}{}_\alpha - iPA_{\alpha\gamma} . \]
 It follows from this, \eqref{eqn:divP} and the definitions of $V_{\alpha\bar\beta\gamma}$ and $T_\alpha$ that
 \begin{multline}
  \label{eqn:pre_divV}
  \nabla^{\bar\beta}V_{\alpha\bar\beta\gamma} = \nabla^{\bar\beta}\nabla_{\bar\beta} A_{\alpha\gamma} - \nabla_\gamma\nabla_{\bar\beta} A^{\bar\beta}{}_\alpha + (2n-1)i\nabla_\gamma T_\alpha - i\nabla_\alpha T_\gamma \\ - (n-1)P_{\alpha\bar\beta} A^{\bar\beta}{}_\gamma - P_{\gamma\bar\beta} A^{\bar\beta}{}_\alpha + PA_{\alpha\gamma} .
 \end{multline}
 In terms of the Chern tensor and the pseudohermitian Schouten tensor, the Bianchi identity~\cite[(2.9)]{Lee1988} states that
 \begin{multline*}
  \nabla^{\bar\beta}\nabla_{\bar\beta} A_{\alpha\gamma} - \nabla_\gamma\nabla_{\bar\beta} A^{\bar\beta}{}_\alpha = -(n-1)i\nabla_0A_{\alpha\gamma} + S_{\alpha\bar\beta\gamma\bar\sigma} A^{\bar\beta\bar\sigma} \\ - nP_{\alpha\bar\beta} A^{\bar\beta}{}_\gamma + 2P_{\gamma\bar\beta} A^{\bar\beta}{}_\alpha - PA_{\alpha\gamma}
 \end{multline*}
 Inserting this into~\eqref{eqn:pre_divV} and using~\eqref{eqn:Q_symmetry} yields~\eqref{eqn:divV}.

 From the symmetry $V_{\alpha\bar\beta\gamma}=V_{\gamma\bar\beta\alpha}$, we may write
 \[ \nabla^\gamma V_{\alpha\bar\beta\gamma} = \nabla^\gamma\nabla_{\bar\beta}A_{\alpha\gamma} + i\nabla^\gamma\nabla_\alpha P_{\gamma\bar\beta} - i\nabla_{\bar\beta}T_\alpha - 2i\nabla^\gamma T_\gamma h_{\alpha\bar\beta} . \]
 Commutator formulae from~\cite[Lemma~2.3]{Lee1988} yield
 \begin{align*}
  \nabla^\gamma\nabla_{\bar\beta}A_{\alpha\gamma} & = \nabla_{\bar\beta}\nabla^\gamma A_{\alpha\gamma} - niA_{\alpha\gamma}A^\gamma{}_{\bar\beta} + i\left| A_{\gamma\rho}\right|^2 h_{\alpha\bar\beta} , \\
  \nabla^\gamma\nabla_\alpha P_{\gamma\bar\beta} & = \nabla_\alpha\nabla^\gamma P_{\gamma\bar\beta} + i\nabla_0P_{\alpha\bar\beta} - S_{\alpha\bar\beta\gamma\bar\sigma} P^{\gamma\bar\sigma} + nP_\alpha{}^\gamma P_{\gamma\bar\beta} - \left|P_{\gamma\bar\sigma}\right|^2h_{\alpha\bar\beta} .
 \end{align*}
 In terms of the pseudohermitian Schouten tensor, the Bianchi identity~\cite[(2.12)]{Lee1988} states that
 \begin{equation}
  \label{eqn:nabla0P}
  \nabla_0P_{\alpha\bar\beta} = i\nabla_{\bar\beta}T_\alpha - i\nabla_\alpha T_{\bar\beta} .
 \end{equation}
 Combining these three displays yields~\eqref{eqn:divbarV}.

 Combining~\cite[(2.6)]{Lee1988} and a commutator formula from~\cite[Lemma~2.3]{Lee1988} yields
 \[ \nabla^\gamma\nabla_0 A_{\alpha\gamma} = \nabla_0\nabla^\gamma A_{\alpha\gamma} + \nabla_\alpha\left|A_{\gamma\rho}\right|^2 + A_{\alpha\gamma}\nabla_\rho A^{\gamma\rho} . \]
 Writing this in terms of $T_\alpha$ and using another commutator formula from~\cite[Lemma~2.3]{Lee1988} yields
 \begin{multline*}
  \nabla^\gamma\nabla_0 A_{\alpha\gamma} = (n+2)i\nabla_0T_\alpha + \nabla_\alpha\nabla^\gamma T_\gamma - \nabla_\alpha\nabla_\gamma T^\gamma \\ + \nabla_\alpha\left|A_{\gamma\rho}\right|^2 - (n+2)iA_{\alpha\gamma}T^\gamma + 2iA_{\alpha\gamma}\nabla^\gamma P .
 \end{multline*}
 In particular, when combined with the definition of $Q_{\alpha\gamma}$, this yields
 \begin{multline}
  \label{eqn:pre_divQ}
  \nabla^\gamma Q_{\alpha\gamma} = -(n+2)\nabla_0T_\alpha + i\nabla_\alpha\nabla^\gamma T_\gamma - i\nabla_\alpha\nabla_\gamma T^\gamma - 2i\nabla^\gamma\nabla_\gamma T_\alpha \\ + 2\nabla^\rho\left(P_\alpha{}^\gamma A_{\gamma\rho}\right) + i\nabla_\alpha\left|A_{\gamma\rho}\right|^2 + (n+2)A_{\alpha\gamma}T^\gamma - 2A_{\alpha\gamma}\nabla^\gamma P .
 \end{multline}
 Next, \eqref{eqn:Q_symmetry} and a commutator formula from~\cite[Lemma~2.3]{Lee1988} yields
\begin{align*}
  \nabla^\gamma\nabla_\gamma T_\alpha - \nabla_\alpha\nabla^\gamma T_\gamma =& i\nabla_0T_\alpha + (n+2)P_\alpha{}^\gamma T_\gamma + PT_\alpha - i\nabla^\rho\left(P_\alpha{}^\gamma A_{\gamma\rho}\right) \\
  & + i\nabla^\rho\left(P_\rho{}^\gamma A_{\gamma\alpha}\right) .
  \end{align*}
 Inserting this into~\eqref{eqn:pre_divQ} and using~\eqref{eqn:divP} and the definitions of $S$, $V_{\alpha\bar\beta\gamma}$ and $Y_\alpha$ yields~\eqref{eqn:divQ}.

 Using the identities
 \begin{align*}
  \nabla_\beta\nabla_\alpha T^\beta - \nabla_\alpha\nabla_\beta T^\beta & = (n-1)iA_{\alpha\beta}T^\beta, \\
  \nabla_\beta\nabla^\beta T_\alpha - \nabla_\alpha \nabla^\beta T_\beta & = -(n-1)i\nabla_0 T_\alpha + iP_\beta{}^\gamma\nabla^\beta A_{\gamma\alpha} - iA_{\gamma\beta}\nabla^\beta P_\alpha{}^\gamma \\
   & \quad - P_\alpha{}^\gamma\nabla_\gamma P + (n+2)P_\alpha{}^\gamma T_\gamma + iA_{\alpha\gamma}\nabla^\gamma P \\ & \quad + (n-1)iA_{\alpha\gamma}T^\gamma ,
 \end{align*}
 one readily derives~\eqref{eqn:divU}.

 From the definitions of $T_\alpha$, $S$, and $Y_\alpha$ we compute that
 \[ \Real \nabla^\gamma Y_\gamma = -P_\alpha{}^\gamma\nabla_0P_\gamma{}^\alpha + \frac{1}{2}\nabla_0\left|A_{\alpha\gamma}\right|^2 + iP_\alpha{}^\gamma\left(\nabla^\alpha T_\gamma - \nabla_\gamma T^\alpha\right) - \Real A_{\alpha\gamma}\nabla^\gamma T^\alpha . \]
 Combining this with~\eqref{eqn:nabla0P} and the definition of $Q_{\alpha\gamma}$ yields~\eqref{eqn:RedivY}.
\end{proof}

\section{Some tractor calculus}
\label{sec:tractor}

\subsection{The CR tractor connection}\label{CRtract}

\newcommand{\E}{\mathcal{E}}
\newcommand{\up}{\Upsilon}
\newcommand{\Up}{\Upsilon}
\newcommand{\s}{\sigma}
\newcommand{\si}{\sigma}
\newcommand{\g}{\gamma}
\renewcommand{\cT}{\mathcal{T}}
\newcommand{\cG}{\mathcal{G}}
\newcommand{\cF}{\mathcal{F}}
\newcommand{\cC}{\mathcal{C}}
\newcommand{\MF}{M_\cF}

\newcommand{\sfrac}[2]{{\textstyle \frac{#1}{#2}}}

\newcommand{\nn}[1]{(\ref{#1})}

\newcommand{\lpl}
{\mbox{$
\begin{picture}(12.7,8)(-.5,-1)
\put(2,0.2){$+$}
\put(6.2,2.8){\oval(8,8)[l]}
\end{picture}$}}


On a hypersurface type CR structure there is no invariant connection
on the tangent bundle, or its contact subbundle. However there is a natural invariant connection on a higher rank natural vector bundle
known as the CR cotractor bundle~\cite{GoverGraham2005}.  A defining
feature of this bundle $\ce_A$ is that for each choice of
pseudo-Hermitian contact form $\theta$ this bundle decomposes into a
direct sum
\begin{equation}\label{tsplit}
\ce_A\stackrel{\theta}{=}  \ce(1,0)\oplus\ce_\al(1,0)\oplus\ce(0,-1).
\end{equation}
So for a section $T_A\in \Gamma(\ce_{A})$ we may write
  $[v_A]_\theta= (\si, \tau_\alpha, \rho)$, where $\s\in \ce(1,0)$, $\tau_\alpha \in\ce_\alpha(1,0)$, and
$\rho\in\ce(0,-1)$.  When the choice of
  $\theta$ is understood it will be omitted from the notation.  A
  change of contact form to $\hat{\theta}=e^\Up \theta$, where $\Up\in
  C^\infty (M)$, induces a different identification to the same direct
  sum bundle, with the components in the $\hat{\theta}$ direct sum
  related to those in the $\theta$ direct sum by the transformation formula
\begin{equation}\label{identif}
     [v_A]_{\widehat{\theta}}=\left( \begin{array}{c} \hat{\sigma}\\
                            \hat{\tau_{\al}}\\
          \hat{\rho} \end{array}\right)
=
\left( \begin{array}{c} \sigma\\
                            \tau_{\al}+\up_\al\sigma\\
\rho -\up^\be\tau_\be-\frac{1}{2}(\up^\be\up_\be+i\up_0)\sigma
\end{array}\right) ,
\end{equation}
where we have used an obvious notation; e.g.\ $\Up_\alpha:=\nabla_\alpha \Up$.  It follows from \nn{identif} that
$\ce_A$ has a composition series
$$
\ce_A=\ce(1,0)\lpl \ce_\al(1,0)\lpl \ce(0,-1),
$$ meaning that in a CR invariant way, $\ce(0,-1)$ is a subbundle of
$\ce_A$, $\ce(1,0)$ is a quotient bundle of $\ce_A$, and the kernel of
the surjection $\ce_A\to \ce(1,0)$ is the reducible subbundle
$\ce_\al(1,0)\lpl \ce(0,-1)$. We write $Z_A$ to denote the canonical bundle injection
$$
Z_A: \ce(0,-1)\to \ce_A ,
$$
and also view this as a section $Z_A\in \Gamma(\ce_A(0,1))$.
Note that conjugation extends to
tractors in the obvious way and, for example, the conjugate tractor
bundle has the composition series
$$
\ce_{\bar{A}}=\ce(0,1)\lpl \ce_{\bar\al}(0,1)\lpl \ce(-1,0),
$$
with the inclusion of $\ce(-1,0)$ denoted
\begin{equation}\label{basic-incl}
Z_{\bar{A}}:\ce(-1,0)\to\ce_{\bar{A}}.
\end{equation}

For two sections $v_A$ and $v'_B$ the quantity
$\si\overline{\rho'}+\rho\overline{\si'}+\bh^{\al\beb}\tau_\al\overline{\tau'_\be}$
is independent of the choice of $\theta$; this is the formula, in a
given scale $\theta$, for the CR invariant Hermitian metric
$h^{A\bar{B}}$ on $\ce_A$. The {\em tractor metric} $h_{A\bar{B}}$ is
the inverse of this, and it gives the Hermitian metric on the {\em (standard) tractor
bundle} $\ce^A$, which by definition is the bundle dual to $\ce_A$.
The tractor metric is used to raise and lower tractor indices in the
usual way; e.g.\ $Z^A:=h^{A\bar{B}}Z_{\bar{B}}$.  Note that from the formula for the metric this gives the surjection
$$
Z^A:\ce_A\to \ce(1,0).
$$
We refer to $Z^A$ as the {\em (CR) canonical tractor}.


In terms of the tractor splitting \nn{tsplit}, the CR tractor
connection is given by the formulae
$$
\nd_\be v_A=\left( \begin{array}{c}
                            \nd_\be\sigma-\tau_\be\\
                            \nd_\be\tau_\al+iA_{\al\be}\sigma\\
         \nd_\be \rho-P_\be{}^\al\tau_\al+T_\be\sigma
                                   \end{array}\right) ,
$$
\begin{equation}\label{tracconn}
\nd_{\beb} v_A= \left( \begin{array}{c}
                               \nd_{\beb} \sigma\\
\nd_{\beb}\tau_\al
+\bh_{\al\beb}\rho+P_{\al\beb} \sigma\\
\nd_{\beb}\rho+iA_{\beb}{}^\al \tau_\al-T_{\beb}\sigma
 \end{array} \right),
\end{equation}
$$
\nd_0 v_A= \left( \begin{array}{c}
                               \nd_0 \sigma+\frac{i}{n+2}P\sigma-i\rho\\
\nd_0\tau_\al-iP_\al{}^\be\tau_\be+\frac{i}{n+2}P\tau_\al+
2iT_\al\sigma\\
\nd_0\rho+\frac{i}{n+2}P\rho+2iT^\al\tau_\al+iS\sigma
 \end{array} \right),
$$
where the $\nd$'s on the right hand side refer to the
pseudohermitian connection on the appropriate weighted bundles.  This
connection is canonically determined by the CR structure (and so, in
particular, is independent of $\theta$); indeed, it is equivalent to the
normal Cartan connection on CR manifold \cite{CapGover2008}. The tractor
connection preserves the tractor metric, $\nabla h_{A\bar{B}}=0$,
and so covariant differentiation commutes with the raising and
lowering of tractor indices.

\subsection{The tractor $D$-operator}\label{Dop} For the construction of
differential operators an important tool is the second order \emph{tractor $D$-operator}
$$
\bD_A:\ce^\star (w,w')\to \ce_A\otimes \ce^\star (w-1,w') ,
$$
where $\ce^\star(w,w')$ indicates any weighted
 tractor bundle, meaning it is the tensor product of $\ce(w,w')$ with any bundle constructed by taking a tensor part of any tensor product of the tractor bundle, its dual, and the conjugates of these.
This is defined by
$$
\bD_A f=\left( \begin{array}{c} w(n+w+w') f\\
                            (n+w+w')\nd_\al f\\
  -(\nd^\be\nd_\be f+iw\nd_0 f +w(1+\frac{w'-w}{n+2})Pf)
                            \end{array}\right)
$$ in the splitting \nn{tsplit} determined by a choice of $\theta$,
but is independent of the choice of $\theta$.  Here $\nd_\al f$ refers
to the tractor connection defined above coupled with the
pseudohermitian connection.
 Conjugation produces the CR invariant operator
\[ \bD_{\bar{A}}:\ce^\star (w,w')\to \ce_{\bar{A}}\otimes \ce^\star (w,w'-1) . \]
By construction both $\bD_A$ and
$\bD_{\bar{A}}$ commute with raising and lowering tractor
indices.

Two other important operators on weighted tractor bundles are the weight operator $\bw$ and its conjugate $\bw^\prime$.  The weight operators are defined to be the unique derivations such that
\[ \bw f = wf \quad\text{and}\quad \bw^\prime f = w^\prime f \]
for all $f\in\mE^\star(w,w^\prime)$.  In particular, the tractor $D$-operator can be written as an operator on arbitrary weighted tractor bundles by
\[ \bD_Af = \begin{pmatrix} \bw(n+\bw+\bw^\prime)f \\ \nabla_\alpha(n+\bw+\bw^\prime)f \\ -\left(\nabla^\beta\nabla_\beta f + i\nabla_0\bw f + P\bw\left(1+\frac{\bw^\prime-\bw}{n+2}\right)f\right) \end{pmatrix} . \]
One can similarly write compositions of tractor $D$-operators; of particular importance in this article is the formula for the composition $\bD_A\bD_{\bar B}$.

\begin{prop}
 \label{prop:ddbar-weight}
 Let $(M^{2n+1},H)$ be a CR manifold.  Given any scale $\sigma\bar\sigma\in\mE(1,1)$, the operator $\bD_A\bD^B=\bh^{B\bar B}\bD_A\bD_{\bar B}$ acts on elements of $\mE(w,w)$, $w\in\bR$, by
 \[ \bD_A\bD^B = \begin{pmatrix} C_3 & (C_2)^\beta & C_1 \\ (C_5)_\alpha & (C_4)_\alpha{}^\beta & (C_2)_\alpha \\ C_6 & (C_5)^\beta & \overline{C_3} \end{pmatrix} , \]
 where
 \begin{align*}
  C_1 & = \bw^2(n+2\bw)(n+2\bw-1), \\
  (C_2)^\beta & = \nabla^\beta\bw(n+2\bw)(n+2\bw-1) , \\
  C_3 & = -\frac{1}{2}(\Delta_b - i\nabla_0(n+2\bw) + 2P\bw)\bw(n+2\bw-1) , \\
  (C_4)_\alpha{}^\beta & = \left(\nabla_\alpha\nabla^\beta - \frac{1}{n}h_\alpha{}^\beta\nabla_\gamma\nabla^\gamma\right)(n+2\bw)(n+2\bw-1) \\
   & \quad + P_\alpha{}^\beta\bw(n+2\bw)(n+2\bw-1) + \frac{1}{n}h_\alpha{}^\beta\left(\Delta_b - nP\right)\bw(n+2\bw-1) , \\
  (C_5)_\alpha & = -\frac{1}{n}\left(\nabla_\alpha\nabla_\gamma\nabla^\gamma + inA_{\alpha\gamma}\nabla^\gamma\right)(n+2\bw)(n+2\bw-1) \\
   & \quad + \left(\frac{1}{n}\nabla_\alpha\Delta_b - \nabla_\alpha P - T_\alpha(n+2\bw)\right)\bw(n+2\bw-1) , \\
  C_6 & = \frac{1}{n^2}\nabla^\gamma\left(\nabla_\gamma\nabla_\alpha\nabla^\alpha + inA_{\alpha\gamma}\nabla^\alpha\right)(n+2\bw)^2 + P^{(\alpha\bar\beta)_0}\nabla_\alpha\nabla_{\bar\beta}(n+2\bw) \\
   & \quad - \frac{1}{n^2}\Delta_b^2\bw(n+\bw) + \frac{2}{n}\Imaginary \nabla^\gamma A_{\beta\gamma}\nabla^\beta\bw(n+2\bw) - \frac{1}{n}P\Delta_b\bw(n-1+2\bw) \\
   & \quad + \frac{4}{n}\Real\nabla^\gamma P\nabla_\gamma \bw(n+\bw) - \frac{2}{n}\Real (\nabla^\gamma P + nT^\gamma)\nabla_\gamma\bw(n+2\bw) \\
   & \quad + \frac{1}{n}\left(\nabla^\gamma(\nabla_\gamma P + nT_\gamma)\right)\bw(n+2\bw) + \bigl| P_{(\alpha\bar\beta)_0}\bigr|^2\bw(n+2\bw) \\
   & \quad + \frac{1}{n}\left((n+2)P^2 - (\Delta_b P) - nS(n+2\bw)\right)\bw^2 ,
 \end{align*}
 $P_{(\alpha\bar\beta)_0}=P_{\alpha\bar\beta}-\frac{P}{n}h_{\alpha\bar\beta}$
 is the tracefree part of the CR Schouten tensor, the term $T_\alpha$
 in the definition of $(C_5)_\alpha$ acts as a multiplication
 operator, the terms $\left(\nabla^\gamma(\nabla_\gamma P +
 nT_\gamma)\right)$ and $(\Delta_b P)$ in the definition of $C_6$ act
 as multiplication operators, and all other operators in the
 definitions of $C_1,\dotsc,C_6$ act to the right.
\end{prop}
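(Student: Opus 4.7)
The plan is to compute $\bD_A\bD^B f$ directly from the definitions, treating each component of the splitting of $\mE^B$ separately. First I would apply $\bD_{\bar B}$ to $f\in\mE(w,w)$ using the conjugate of the defining formula for $\bD_A$; since $\bw f=\bw^\prime f=wf$ the universal prefactor $n+\bw+\bw^\prime$ reduces to $n+2w$, and the three slots of $\bD_{\bar B}f$ are the familiar quantities $w(n+2w)f$, $(n+2w)\nabla_{\bar\beta}f$, and $-\bigl(\nabla^\gamma\nabla_\gamma f - iw\nabla_0 f + wPf\bigr)$. Raising the index via $h^{B\bar B}$ repositions these three entries into the splitting of $\mE^B$, giving the explicit three components of $\bD^B f$ as a tractor-valued density.

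Next I would apply $\bD_A$ to the weighted tractor $\bD^B f$. Two subtleties must be handled carefully. First, the weight operators $\bw$ and $\bw^\prime$ inside $\bD_A$ now act on the \emph{total} weight of each slot of $\bD^B f$, not on the density weight $w$ alone; tracking these slotwise eigenvalues is the mechanism by which the scalar factor $n+2w$ is promoted to the operator factors $n+2\bw$ and $n+2\bw-1$ appearing throughout $C_1,\dotsc,C_6$. Second, whenever one of $\nabla_\alpha$, $\nabla_{\bar\beta}$, or $\nabla_0$ in $\bD_A$ crosses a tractor slot it must be coupled via the CR tractor connection \eqref{tracconn}, producing curvature contributions involving $A_{\alpha\beta}$, $P_{\alpha\bar\beta}$, $T_\alpha$, and $S$. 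Writing out the nine entries of the matrix slot by slot and combining these ingredients gives expressions which, after invoking the Tanaka--Webster commutator identities of \cite[Lemma 2.3]{Lee1988} and the contracted Bianchi identity \eqref{eqn:divP}, can be rearranged into the form claimed. The decomposition $P_{\alpha\bar\beta}=P_{(\alpha\bar\beta)_0}+\tfrac{P}{n}h_{\alpha\bar\beta}$ is forced by insisting that $C_4$ be tracefree on its two tractor indices, so that the pure-trace piece is absorbed into $h_\alpha{}^\beta$-multiples of scalar operators and thereby feeds into the diagonal entry $C_3$.

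The main obstacle is not conceptual but combinatorial: the scalar entry $C_6$ collects fourth-order derivatives of $f$ together with every curvature quantity at play, so obtaining its precise coefficients requires careful collection of the Schouten, $T$, $A$, and $S$ contributions coming both from the outer $\bD_A$ (the $\nabla^\gamma\nabla_\gamma$, $i\bw\nabla_0$, and Schouten terms) and from the connection-coupling on each slot of $\bD^B f$. An orderly organization, grouping first by slot rather than by curvature type and only at the end invoking the commutators to pass between $\nabla^\gamma\nabla_\gamma$, $\nabla_\gamma\nabla^\gamma$, and $\tfrac{1}{2}\Delta_b$, is what keeps the calculation under control.
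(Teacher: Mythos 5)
Your plan coincides with the paper's proof: one first records the three slots of $\bD^B f$ (the paper quotes this from Gover--Graham, Proposition~2.2), then applies $\bD_A$ to the weight-$(w,w-1)$ tractor $\bD^B f$ using the Tanaka--Webster connection coupled to the tractor connection~\eqref{tracconn} --- which is exactly where the shift from $n+2\bw$ to $n+2\bw-1$ and the curvature terms $A_{\alpha\beta}$, $P_{\alpha\bar\beta}$, $T_\alpha$, $S$ enter --- and finally rearranges using the identity relating $\tfrac12(\Delta_b-(n+2w)i\nabla_0+2wP)$ to $\nabla_\beta\nabla^\beta$ together with Lee's commutator and Bianchi identities. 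One small correction to your bookkeeping heuristic: $(C_4)_\alpha{}^\beta$ as stated is \emph{not} tracefree (it contains the full $P_\alpha{}^\beta$ and explicit $h_\alpha{}^\beta$ pieces), so the appearance of $P_{(\alpha\bar\beta)_0}$ is not forced by a tracefree normalisation of $C_4$; it simply emerges in the bottom-slot entry $C_6$.
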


\begin{proof}
 Using the definition of the tractor $D$-operator and~\cite[Proposition~2.2]{GoverGraham2005}, we observe that if $f\in\mE(w,w)$, then
 \begin{equation}
  \label{eqn:DB}
  \bD^Bf = \left( -\frac{1}{2}\Delta_bf + \frac{n+2w}{2}i\nabla_0f - wPf , (n+2w)\nabla^\beta f , w(n+2w)f \right) .
 \end{equation}
 Using this and the formula
 \[ \bD_A\bD^Bf = \begin{pmatrix} w(n+2w-1)D^Bf \\ (n+2w-1)\nabla_\alpha D^Bf \\ -\left(\nabla^\gamma\nabla_\gamma D^Bf + iw\nabla_0D^Bf + \frac{n+1}{n+2}wPD^Bf\right) \end{pmatrix} \]
 yields the expressions for the operators $C_1,(C_2)^\beta$, and $C_3$.

 Next observe that
 \[ \frac{1}{2}\left(\Delta_bf - (n+2w)i\nabla_0f + 2wPf\right) = \frac{1}{n}\left((n+2w)\nabla_\beta\nabla^\beta f - w\Delta_b f + wnPf\right) . \]
 Combining this with~\eqref{eqn:DB} and the expression for the tractor connection yields the expressions for the operators $(C_4)_\alpha{}^\beta$ and $(C_5)_\beta$.

 Finally, note that
 \[ \nabla^\gamma\nabla_\gamma I_{\bar B} = \begin{pmatrix} \nabla^\gamma\nabla_\gamma\sigma - \nabla_\gamma\tau^\gamma - P\sigma - n\rho \\ \nabla^\gamma\nabla_\gamma\tau_{\bar\beta} + P_{\gamma\bar\beta}\nabla^\gamma\sigma + \sigma\left(\nabla_{\bar\beta} P + (n-1)T_{\bar\beta}\right) + \nabla_{\bar\beta}\rho - iA_{\bar\beta}{}^\gamma\nabla_\gamma\sigma \\ \mD \end{pmatrix} , \]
 where
 \[ \mD = \nabla^\gamma\nabla_\gamma\rho - i\nabla^\gamma(A_{\alpha\gamma}\tau^\alpha) - \nabla^\gamma(\sigma T_\gamma) - P_\alpha{}^\gamma\left(\nabla_\gamma\tau^\alpha + \sigma P_\gamma{}^\alpha + \rho h_\gamma{}^\alpha\right) + T^\gamma\nabla_\gamma\sigma . \]
 Combining this with~\eqref{eqn:DB} and the definition of the tractor $D$-operator yields the expression for the operator $C_6$.
\end{proof}

\section{CR pluriharmonic functions and (pseudo-)Einstein contact forms}
\label{sec:pluri}

\subsection{Log densities}
\label{subsec:pluri/log}

In order to study CR pluriharmonic functions via tractors, it is useful to introduce log densities (cf.\ \cite{GoverWaldron2011}).  Let $\cmC\subset T^\ast M$ be the $\bR_+$-bundle of positive elements of $H^\perp$.  Given $w\in\bR$, let $\mE(w)$ be the bundle associated to $\cmC$ via the representation $\lambda\mapsto\lambda^{2w}$ of $\bR_+$.  In particular, $\mE(w)$ can be identified with a real subbundle of $\mE(w,w)$ and $\mE(1)$ can be identified with $\cmC$.  Hence $\mE(w)$ is trivial as a vector bundle.  We likewise let $\mF(w)$ be the real line bundle induced by the log representations of $\bR_+$.  In particular, a section $\lambda\in\mF(w)$ is equivalent to a function $\underline{\lambda}\colon\cmC\to\bR$ with the equivariance property
\begin{equation}
\label{eqn:log-density_equivariant}
\underline{\lambda}(t^2\theta,p) = \underline{\lambda}(\theta,p) + 2w\log t .
\end{equation}
Note that if $\tau$ is a positive section of $\mE(w)$ and $\underline{\tau}$ is the corresponding equivariant section of $\cmC$, then the composition $\log\compose\underline{\tau}$ has the property~\eqref{eqn:log-density_equivariant}, and hence is equivalent to a section of $\mF(w)$.  We shall denote this section by $\log\tau$.  It is clear that a section of $\mF(1)$ is $\log\tau$ for some positive section $\tau\in\mE(1)$.  We define the operator $\nabla\colon\mF(1)\to T^\ast M$ by setting
\begin{equation}
 \label{eqn:log-density_connection}
 \nabla\log\tau = \tau^{-1}\nabla\tau .
\end{equation}
We can extend these definitions by complex linearity and thereby consider $\mF(w)$ as a complex bundle; in particular, we obtain the operator $\nabla\colon\mF(w)\to T_{\bC}^\ast M$.

The requirement that the weight operators $\bw$ and $\bw^\prime$ satisfy the Leibniz property means that the natural definition of the weight operators on log densities is such that for any $\lambda\in\mF(w_0)$, it holds that
\begin{equation}
\label{eqn:log-density_commutator}
\bw\lambda = w_0 = \bw^\prime\lambda .
\end{equation}
Equivalently, we have that $[\bw,\lambda]=w_0=[\bw^\prime,\lambda]$.

\subsection{CR pluriharmonic functions and tractors}
\label{subsec:pluri/pluri}

One use of log densities is to provide a rigorous method for carrying
out Branson's method of analytic continuation in the
dimension~\cite{Branson1995,BransonGover2008b,GoverWaldron2011}.  For
example, the derivation of the formula for the $P^\prime$-operator in
dimension three~\cite{CaseYang2012} proceeds by observing that the CR
Paneitz operator $P_4\colon\mE(w,w)\to\mE(w-2,w-2)$ can be written in
general dimension as $P_4=C-wR$ for $w=-\frac{n-1}{2}$ and
$C=4\nabla^\gamma(\nabla_\gamma\nabla_\beta+inA_{\beta\gamma})\nabla^\beta$;
note that $C$ annihilates CR pluriharmonic
functions~\cite{GrahamLee1988}.  Thus $-\frac{1}{w}P_4f=Rf$ makes
sense for any $f\in\mP$.  This expression in the case $n=1$,
corresponding to $w=0$, yields the operator $P_4^\prime=R$.  As we
explain below, by working with log densities and the tractor formula
for the CR Paneitz operator, this ``division by zero'' can be realized
through the commutator property~\eqref{eqn:log-density_commutator}.

To begin, we point out, as an immediate corollary of
Proposition~\ref{prop:ddbar-weight}, the tensor formula for the
operator $\D_A\D_{\bar B}$ on $\mE(0,0)$.  From this formula
we see that CR pluriharmonic functions are annihilated by $\D_A\D_{\bar B}$.
 A key point is that the
tractor formula for the critical CR GJMS operators always factors
through this operator; see Theorem~\ref{thm:general_formula}.  In
particular, $\D_A\D_{\bar B}$ acting on $\mE(0,0)$ is the tractor
formula for the CR Paneitz operator in dimension three.

\begin{lem}
\label{lem:dbard}
Let $(M^{2n+1},H)$ be a CR manifold and let $f\in\mE(0,0)$.  Given any scale $\sigma\bar\sigma\in\mE(1,1)$, it holds that
\begin{equation}
\label{eqn:dbard_matrix}
\D_A\D^Bf = \begin{pmatrix}
                   0 & 0 & 0 \\
                   -(n-1)P_\alpha(f) & n(n-1)B_\alpha{}^\beta(f) & 0 \\
                   \nabla^\gamma P_\gamma(f) + nP^{\gamma\bar\sigma}B_{\gamma\bar\sigma}(f) & -(n-1)P^\beta(f) & 0
                   \end{pmatrix} ,
\end{equation}
where
\begin{align*}
B_{\alpha\bar\beta}(f) & = \nabla_\alpha\nabla_{\bar\beta} f - \frac{1}{n}\nabla_\gamma\nabla^\gamma f\,h_{\alpha\bar\beta} \\
P_\alpha(f) & = \nabla_\alpha\nabla_\beta\nabla^\beta f + inA_{\alpha\beta}\nabla^\beta f .
\end{align*}
In particular, if $f\in\mP$, then $\D_A\D_{\bar B}f=0$.
\end{lem}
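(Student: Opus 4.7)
The plan is to derive both statements as immediate corollaries of Proposition~\ref{prop:ddbar-weight} specialized to weight $w=0$, combined with the Graham--Lee characterization of CR pluriharmonic functions.

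For the matrix formula, I will specialize the expressions for $C_1,\dotsc,C_6$ to $f\in\mE(0,0)$. Since $\bw$ acts as $0$ on $\mE(0,0)$, any term carrying a trailing factor of $\bw$ or $\bw^2$ annihilates $f$, while the polynomial factors $(n+2\bw)$, $(n+2\bw-1)$, and $(n+2\bw)^2$ that act only through intermediate derivatives reduce to the scalars $n$, $n-1$, and $n^2$ respectively. This kills $C_1$, $(C_2)^\beta$, $C_3$, $(C_2)_\alpha$, and $\overline{C_3}$ entirely, collapses $(C_4)_\alpha{}^\beta$ to $n(n-1)B_\alpha{}^\beta(f)$, collapses $(C_5)_\alpha$ to $-(n-1)P_\alpha(f)$, and reduces $C_6$ to $\nabla^\gamma P_\gamma(f) + n P^{(\alpha\bar\beta)_0}\nabla_\alpha\nabla_{\bar\beta}f$. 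Since $B_{\gamma\bar\sigma}(f)$ is tracefree, this last quantity equals $\nabla^\gamma P_\gamma(f) + nP^{\gamma\bar\sigma}B_{\gamma\bar\sigma}(f)$, giving the claimed matrix. The entries $(C_5)^\beta$ and $(C_2)_\alpha$ are handled by complex conjugation.

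For the second claim, I invoke two facts. First, the Graham--Lee characterization~\cite{GrahamLee1988} of $\mP$ states that $f\in\mP$ if and only if $P_\alpha(f)=0$; from this, $P^\beta(f)=0$ follows by conjugation (using that elements of $\mP$ are real-valued), and $\nabla^\gamma P_\gamma(f)=0$ is trivial. Second, writing $f=\Real F$ for some CR holomorphic $F$ and using $\nabla_\alpha\bar{F}=0$ together with the commutator $[\nabla_\alpha,\nabla_{\bar\beta}]$ on scalars of weight $(0,0)$ shows that $\nabla_\alpha\nabla_{\bar\beta}f$ is a multiple of $h_{\alpha\bar\beta}$, so $B_{\alpha\bar\beta}(f)=0$. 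Substituting these vanishings into the matrix formula yields $\D_A\D_{\bar B}f=0$.

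The main obstacle is purely clerical: isolating the surviving terms in $C_6$ (which contains the most subterms) and matching conventions for the index-raised and conjugate operators $P^\beta$ and $P^{(\alpha\bar\beta)_0}$. No genuinely new identity is required beyond Proposition~\ref{prop:ddbar-weight} and the Graham--Lee characterization.
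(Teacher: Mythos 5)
Your proof is correct and follows the paper's own route: the paper presents this lemma as an immediate corollary of Proposition~\ref{prop:ddbar-weight}, obtained exactly as you do by setting $\bw=0$ (so that every term with a trailing weight operator dies and the polynomial prefactors reduce to $n$, $n-1$, $n^2$) and then invoking the vanishings $P_\alpha(f)=0$ and $B_{\alpha\bar\beta}(f)=0$ for $f\in\mP$. One minor caveat: the Graham--Lee statement ``$f\in\mP$ if and only if $P_\alpha(f)=0$'' is an equivalence only in dimension three, but you only use the implication $f\in\mP\Rightarrow P_\alpha(f)=0$, which holds in all dimensions, so the argument is unaffected.
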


\begin{remark}
 When $n>1$, we readily see that $f\in\mP$ if and only if $f\in\ker \D_A\D_{\bar B}$.
\end{remark}

The fact that CR pluriharmonic functions lie in the kernel of
$\D_A\D_{\bar B}$ when restricted to $\mE(0,0)$ means that the
scale-dependent operator $K_{A\bar B}\colon\mP\to\mE_{A\bar B}(-1,-1)$
defined by
\begin{equation}
 \label{eqn:I_ABbar} K_{A\bar B}(f) = -\D_A\D_{\bar B}\left(f\log\sigma\bar\sigma\right)
\end{equation}
for a given choice of scale $\sigma\in\mE(1,0)$ is well-defined, as
follows.  We may regard Proposition~\ref{prop:ddbar-weight} as giving
a formula for $\D_A\D_{\bar B}$ acting on log densities by using the
fact $\mP\subset\ker \D_A\D_{\bar B}$ to write
\begin{equation}
 \label{eqn:I_ABbar_commutator}
 \D_A\D_{\bar B}\left(f\log\sigma\bar\sigma\right) = \left[ \D_A\D_{\bar B},\log\sigma\bar\sigma \right] f .
\end{equation}
By Lemma~\ref{logdec} below, the commutator $ \left[ \D_A\D_{\bar B}, \cdot
   \right]$ is well defined on log densities, and takes values in
 linear differential operators $\mathcal{E}(0,0)\to
 \cT_{A\bar{B}}(-1,-1)$.  Let
 $\theta=(\sigma\bar\sigma)^{-1}\btheta$ and note that, by~\eqref{eqn:log-density_commutator}, we have that  $[\bw,\log\sigma\bar\sigma]=1$. A
 tractor expression for $K_{A\bar B}(f)$ is readily derived using  Proposition~\ref{prop:ddbar-weight}; we give
 here the formula in the scale $\theta$.

\begin{lem}
\label{lem:dbard_logpluri}
Let $(M^{2n+1},H)$ be a CR manifold and let $\sigma\bar\sigma\in\mE(1,1)$ be a scale.  Given any $f\in\mP$, the function $K_{A\bar B}(f)$ is given by
\begin{equation}
\label{eqn:dbard_logpluri_matrix}
K_A{}^B(f) = \begin{pmatrix}
                 (n-1)\nabla_\gamma\nabla^\gamma f & -n(n-1)\nabla^\beta f & 0 \\
                 (n-1)C_\alpha(f) & (n-1)C_\alpha{}^\beta(f) & -n(n-1)\nabla_\alpha f \\
                 D(f) & (n-1)C^\beta(f) & (n-1)\nabla^\gamma\nabla_\gamma f
                 \end{pmatrix} ,
\end{equation}
where
\begin{align*}
 C_{\alpha\bar\beta}(f) & = -\frac{1}{n}h_{\alpha\bar\beta}\Delta_bf - nfP_{(\alpha\bar\beta)_0}, \\
 C_\alpha(f) & = -\frac{1}{n}\nabla_\alpha\Delta_bf + P\nabla_\alpha f + f(\nabla_\alpha P + nT_\alpha), \\
 D(f) & = \frac{1}{n}\Delta_b^2 f - 2\Imaginary\nabla^\beta\left(A_{\alpha\beta}\nabla^\alpha f\right) - 4\Real\nabla^\alpha\left(P\nabla_\alpha f\right) + \frac{n-1}{n}P\Delta_b f \\
  & \quad + 2\Real\left((\nabla_\alpha P+nT_\alpha)\nabla^\alpha f\right) - f\left(n\lv P_{(\alpha\bar\beta)_0}\rv^2+\nabla^\gamma(\nabla_\gamma P + nT_\gamma)\right) .
\end{align*}
In particular, if $\sigma\bar\sigma$ is a pseudo-Einstein scale, then $1\in\ker K_{A\bar B}$.
\end{lem}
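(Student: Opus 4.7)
The plan is to compute $K_{A\bar B}(f)$ via the commutator identity \eqref{eqn:I_ABbar_commutator}: since $f\in\mP$, Lemma~\ref{lem:dbard} gives $\D_A\D_{\bar B}f = 0$, so $K_{A\bar B}(f) = -[\D_A\D_{\bar B}, L]f$ with $L := \log\sigma\bar\sigma$. The key simplification is that the stated formula is in the scale $\theta$ corresponding to $\sigma\bar\sigma$, for which the Tanaka--Webster connection parallelises $\sigma\bar\sigma$; hence $\nabla L = 0$ and $L$ commutes with every covariant derivative appearing in $\D_A\D^B$. Thus the entire commutator $[\D_A\D^B, L]$ is generated by the weight operators $\bw$ and $\bw^\prime$ via $[\bw, L] = [\bw^\prime, L] = 1$. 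Since $f\in\mE(0,0)$ satisfies $\bw f = \bw^\prime f = 0$, the derivation property of the weight operators yields $\bw(fL) = f = \bw^\prime(fL)$ together with $\bw^k(fL) = 0 = (\bw^\prime)^k(fL)$ for $k \geq 2$ and $\bw\bw^\prime(fL) = 0$; consequently, within any polynomial in the weight operators acting on $fL$, only monomials of total degree one in $\bw, \bw^\prime$ contribute non-$L$ terms.

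With this reduction, each entry of the matrix representation of $\D_A\D^B$ in Proposition~\ref{prop:ddbar-weight} is evaluated on $fL$ and the non-$L$ contribution is recorded (the $L$-terms summed over all entries amount to $L\cdot\D_A\D_{\bar B}(f) = 0$). For instance, in the $C_3$ entry, $\bw(n+2\bw-1)(fL) = (n-1)f$, after which $\Delta_b - i\nabla_0(n+2\bw) + 2P\bw$ acting on the weight-zero function $(n-1)f$ returns $(n-1)(\Delta_b f - in\nabla_0 f)$; using the scalar commutator identity $\nabla^\gamma\nabla_\gamma - \nabla_\gamma\nabla^\gamma = in\nabla_0$ on $\mE(0,0)$, this equals $2(n-1)\nabla_\gamma\nabla^\gamma f$, so $-C_3(fL) = (n-1)\nabla_\gamma\nabla^\gamma f$, matching the claimed top-left entry. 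The entries $C_1$, $(C_2)^\beta$, $(C_2)_\alpha$, $(C_4)_\alpha{}^\beta$, $(C_5)_\alpha$, $(C_5)^\beta$, and $\overline{C_3}$ are treated in analogous fashion. The $C_6$ entry, being the most intricate, requires extracting the non-$L$ contributions from its many interleaved weight and differential operators, then consolidating the result via the same scalar identity, ultimately producing the claimed expression $D(f)$.

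For the second assertion, substitute $f = 1$ into the formula just derived. All derivatives of $f$ vanish, leaving only $C_{\alpha\bar\beta}(1) = -nP_{(\alpha\bar\beta)_0}$, $C_\alpha(1) = \nabla_\alpha P + nT_\alpha$, and $D(1) = -n\lv P_{(\alpha\bar\beta)_0}\rv^2 - \nabla^\gamma(\nabla_\gamma P + nT_\gamma)$. Under the pseudo-Einstein hypothesis, $P_{(\alpha\bar\beta)_0} = 0$: for $n \geq 2$ this is the definition, and for $n = 1$ it is automatic. Moreover, $\nabla_\alpha P + nT_\alpha = 0$: for $n \geq 2$ this follows from \eqref{eqn:divP} applied to $P_{\alpha\bar\beta} = (P/n)h_{\alpha\bar\beta}$, while for $n = 1$ it is a direct rewriting of the condition $\nabla_\alpha R = i\nabla^\gamma A_{\alpha\gamma}$ using $P = R/4$ and $T_\alpha = \tfrac{1}{3}(\nabla_\alpha P - i\nabla^\gamma A_{\alpha\gamma})$. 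Therefore every potentially nonzero entry of $K_A{}^B(1)$ vanishes, giving $K_{A\bar B}(1) = 0$.

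The principal obstacle lies in handling $C_6$: it is the sum of many terms in which weight operators interleave with different differential operators acting to the right, so systematically extracting the non-$L$ contributions of $C_6(fL)$ and consolidating them into the single expression $D(f)$ --- with correct signs and coefficients --- is a lengthy though routine bookkeeping exercise, requiring repeated use of the scalar-function commutator identities and the transformation between $\Delta_b$, $\nabla_\gamma\nabla^\gamma$, $\nabla^\gamma\nabla_\gamma$, and $\nabla_0$.
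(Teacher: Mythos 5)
Your proposal is correct and follows essentially the same route as the paper: work in the scale $\theta=(\sigma\bar\sigma)^{-1}\btheta$ so that $[\nabla,\log\sigma\bar\sigma]=0$, use $\D_A\D_{\bar B}f=0$ for $f\in\mP$ together with $[\bw,\log\sigma\bar\sigma]=1$, and read off $K_{A\bar B}(f)$ as the negative of the coefficient of $\bw$ in Proposition~\ref{prop:ddbar-weight}. The paper compresses this into three sentences; your version merely spells out the entry-by-entry bookkeeping and the pseudo-Einstein verification (including the $n=1$ case) in more detail.
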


\begin{remark}
 When $n>1$, we readily see that $1\in\ker K_{A\bar B}$ if and only if $\sigma\bar\sigma$ is a pseudo-Einstein scale.
\end{remark}

\begin{proof}
By definition, we have that $[\nabla_\alpha,\log\sigma\bar\sigma]=0$ in the scale $\sigma\bar\sigma$.  It then follows from~\eqref{eqn:log-density_commutator} and~\eqref{eqn:I_ABbar_commutator} that $K_{A\bar B}(f)$ arises as the negative of the coefficient of $\bw$ in Proposition~\ref{prop:ddbar-weight}.  This yields~\eqref{eqn:dbard_logpluri_matrix}.  Finally, $\sigma\bar\sigma$ is a pseudo-Einstein scale if and only if $P_{(\alpha\bar\beta)_0}=0$ and $\nabla_\alpha P+nT_\alpha=0$, from which the last claim readily follows.
\end{proof}

In the case $n=1$, Lemma~\ref{lem:dbard_logpluri} yields
$K_{A\bar B}(f)=Z_AZ_{\bar B}P_4^\prime f$ for all $f\in\mP$ and also
the the transformation formula for the $P^\prime$-operator.  The
corresponding result in general dimensions is described in
Section~\ref{sec:algorithm}.

The fact that constants lie in the kernel of $K_{A\bar B}$ when $\sigma\bar\sigma$ determines a pseudo-Einstein scale means that for such a scale, the tractor $I_{A\bar B}\in\mE_{A\bar B}(-1,-1)$ defined by
\begin{equation}
\label{eqn:I_AB}
I_{A\bar B} = \frac{1}{2}\D_A\D_{\bar B}\left(\left(\log \sigma\bar\sigma\right)^2\right)
\end{equation}
is well-defined.  Indeed, since $1\in\ker \D_A\D_{\bar B}\cap \ker
K_{A\bar B}$, we compute that
\begin{equation}
 \label{eqn:I_AB_commutator}
 \D_A\D_{\bar B}\left(\left(\log\sigma\bar\sigma\right)^2\right) = \bigl[[\D_A\D_{\bar B},\log\sigma\bar\sigma],\log\sigma\bar\sigma\bigr] .
\end{equation}
It follows at once from Lemma~\ref{logdec} below that $I_{A\bar B}$ is a well-defined tractor field for any
pseudo-Einstein scale; on a fixed CR manifold this tractor field is determined entirely by
$\theta$.
A tractor expression for $I_{A\bar B}$ is
readily derived; we give here the formula in the scale $\theta$.

\begin{lem}
\label{lem:dbard_log2}
Let $(M^{2n+1},H)$ be a CR manifold and let $\sigma\bar\sigma\in\mE(1,1)$ be a pseudo-Einstein scale.  Then $I_{A\bar B}$ is given by
\begin{equation}
\label{eqn:dbard_log2_matrix}
I_A{}^B = \begin{pmatrix}
              -(n-1)P & 0 & n(n-1) \\
              \frac{2(n-1)}{n}\nabla_\alpha P & \frac{2(n-1)}{n}Ph_\alpha{}^\beta & 0 \\
              -\frac{2}{n}\Delta_bP - \lv A_{\alpha\beta}\rv^2 + \frac{n+3}{n}P^2 & \frac{2(n-1)}{n}\nabla_{\bar\beta}P & -(n-1)P
              \end{pmatrix} .
\end{equation}
\end{lem}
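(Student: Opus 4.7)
My plan is to compute $I_{A\bar B}$ by precisely the same mechanism used to establish Lemma~\ref{lem:dbard_logpluri}. I would view Proposition~\ref{prop:ddbar-weight} as presenting each entry of $\D_A\D^B$ as an operator polynomial $L(\bw)$ in the weight operator, whose coefficients are built from the Tanaka--Webster connection and the pseudohermitian curvature of $\theta$. Since $\log\sigma\bar\sigma$ is annihilated by the connection in its own scale, the only source of noncommutation between $L(\bw)$ and $\log\sigma\bar\sigma$ is $[\bw, \log\sigma\bar\sigma] = 1$ from~\eqref{eqn:log-density_commutator}. The formal identity $[L(\bw), \log\sigma\bar\sigma] = L'(\bw)$ then iterates to $[[L(\bw), \log\sigma\bar\sigma], \log\sigma\bar\sigma] = L''(\bw)$.

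Combining this with~\eqref{eqn:I_AB_commutator} and the two annihilation facts $\D_A\D_{\bar B}(1) = 0$ (Lemma~\ref{lem:dbard}) and $K_{A\bar B}(1) = 0$ (the last claim of Lemma~\ref{lem:dbard_logpluri}, valid precisely because $\sigma\bar\sigma$ is a pseudo-Einstein scale), no cross-terms survive and I obtain $I_{A\bar B} = \tfrac{1}{2}L''(0) \cdot 1$. Equivalently, each entry of $I_A{}^B$ will be the coefficient of $\bw^2$ in the corresponding entry of Proposition~\ref{prop:ddbar-weight}, applied to the constant $1$, so that all genuine-derivative contributions drop out and only multiplicative curvature terms survive. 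What remains is a bookkeeping exercise: extract each $\bw^2$-coefficient from $C_1$, $C_3$, $(C_4)_\alpha{}^\beta$, $(C_5)_\alpha$, $C_6$ and their conjugates, then simplify using the pseudo-Einstein identities $P_{\alpha\bar\beta} = \tfrac{P}{n}h_{\alpha\bar\beta}$ and $T_\alpha = -\tfrac{1}{n}\nabla_\alpha P$. For instance, $C_1$ contributes $n(n-1)$ directly to the $(1,3)$ entry, and $(C_4)_\alpha{}^\beta$ contributes $(4n-2)P_\alpha{}^\beta - 2Ph_\alpha{}^\beta$, which collapses to $\tfrac{2(n-1)}{n}Ph_\alpha{}^\beta$ under pseudo-Einstein; the $(2,1)$ entry similarly reduces via $-2\nabla_\alpha P - (4n-2)T_\alpha = \tfrac{2(n-1)}{n}\nabla_\alpha P$.

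The principal obstacle will be the $(3,1)$ entry coming from $C_6$, which has many contributing terms. After extracting its $\bw^2$-coefficient and dropping the terms that vanish on $1$, I expect only the multiplicative survivors $\tfrac{2}{n}\nabla^\gamma(\nabla_\gamma P + nT_\gamma)$, $2\lv P_{(\alpha\bar\beta)_0}\rv^2$, $\tfrac{(n+2)P^2}{n}$, $-\tfrac{\Delta_b P}{n}$, and $-nS$. The first two vanish under the pseudo-Einstein conditions. Substituting $T_\alpha = -\tfrac{1}{n}\nabla_\alpha P$ and $P_{\alpha\bar\beta} = \tfrac{P}{n}h_{\alpha\bar\beta}$ into the definition of $S$ yields $S = \tfrac{\Delta_b P - P^2}{n^2} + \tfrac{\lv A_{\alpha\beta}\rv^2}{n}$, and combining everything produces the stated $\tfrac{n+3}{n}P^2 - \tfrac{2}{n}\Delta_b P - \lv A_{\alpha\beta}\rv^2$; the remaining entries $(C_5)^\beta$ and $\overline{C_3}$ then follow by conjugation.
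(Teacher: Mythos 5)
Your proposal is correct and follows essentially the same route as the paper: the paper's proof likewise reduces $I_{A\bar B}=\tfrac12\bigl[[\D_A\D_{\bar B},\log\sigma\bar\sigma],\log\sigma\bar\sigma\bigr](1)$ to reading off the coefficient of $\bw^2$ in Proposition~\ref{prop:ddbar-weight}, using $[\bw,\log\sigma\bar\sigma]=1$, $[\nabla_\alpha,\log\sigma\bar\sigma]=0$ in the scale, and the pseudo-Einstein identities $P_{(\alpha\bar\beta)_0}=0$, $\nabla_\alpha P+nT_\alpha=0$. Your entry-by-entry extraction (including the $C_6$ bookkeeping and the simplification of $S$) checks out against the stated matrix.
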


\begin{proof}

 Note that~\eqref{eqn:log-density_commutator} implies $[\bw^2,\log^2\sigma\bar\sigma]=2$.  Since $[\nabla_\alpha,\log\sigma\bar\sigma]=0$ and $\sigma\bar\sigma$ is a pseudo-Einstein scale, we thus need only consider the coefficient of $\bw^2$ in Proposition~\ref{prop:ddbar-weight}.
\end{proof}

In the case $n=1$, Lemma~\ref{lem:dbard_log2} yields $I_{A\bar B}=Z_AZ_{\bar B}Q^\prime$ and also the transformation formula for the $Q^\prime$-curvature.  The corresponding result in general dimensions is described in Section~\ref{sec:algorithm}.

Finally, let us comment on our normalisations.  Suppose that $\sigma,s\in\mE(1,0)$ are two scales and $s\bar s=e^{-\Upsilon}\sigma\bar\sigma$; thus the contact forms $\theta=(\sigma\bar\sigma)^{-1}\btheta$ and $\htheta=(s\bar s)^{-1}\btheta$ are related by $\htheta=e^\Upsilon\theta$.  Let $K_{A\bar B},\widehat{K}_{A\bar B}\colon\mP\to\mE_{A\bar B}(-1,-1)$ be the operators defined in terms of $\sigma$ and $s$, respectively, by~\eqref{eqn:I_ABbar}.  It follows immediately from~\eqref{eqn:I_ABbar} that
\begin{equation}
 \label{eqn:I_ABbar_transformation}
\widehat{K}_{A\bar B}(f) = K_{A\bar B}(f) + \bD_A\bD_{\bar B}(\Upsilon f) .
\end{equation}
In particular, this normalisation recovers the familiar transformation formula~\eqref{eqn:pprime_trans_intro} for the $P^\prime$-operator in dimension three.

Suppose additionally that $\theta$ and $\htheta$ are both
pseudo-Einstein.  Then $\Upsilon\in\mP$ and the tractors $I_{A\bar B}$
and $I_{A\bar B}$ defined in terms of $\sigma$ and $s$, respectively,
by~\eqref{eqn:I_AB} are well-defined.  Moreover, \eqref{eqn:I_AB}
gives
\begin{equation}
 \label{eqn:I_AB_transformation}
 \hI_{A\bar B} = I_{A\bar B} + K_{A\bar B}(\Upsilon) + \frac{1}{2}\D_A\D_{\bar B}\left(\Upsilon^2\right) .
\end{equation}
In particular, this normalisation recovers the familiar transformation formula~\eqref{eqn:qprime_trans_intro} for the $Q^\prime$-curvature in dimension three.

\subsection{(Pseudo-)Einstein manifolds and (partially) parallel tractors}
\label{subsec:pluri/einstein}

From the tractor perspective, a natural reason to study pseudo-Einstein and Einstein structures on CR manifolds is that they correspond to the existence of holomorphic and parallel standard tractors, respectively (for the latter cf. \cite{CapGover2008,Leitner2007}).  Parallel tractors are especially useful; we use them in Section~\ref{sec:product} to derive simple local formulae for $P$, $P^\prime$ and $Q^\prime$ from tractor formulae in Einstein scales.

\begin{prop}
 \label{prop:einstein_holomorphic}
 Let $(M^{2n+1},H)$ be a CR manifold.  Suppose that $\theta$ is a pseudo-Einstein contact form.  Then locally there exists a $\sigma\in\mE(1,0)$, unique up to multiplication by a constant $\lambda\in\bC$ with $\lv\lambda\rv^2=1$, such that $\theta=(\sigma\bar\sigma)^{-1}\btheta$ and $\D_A\sigma$ is holomorphic; i.e.\ $\nabla_{\bar\beta}\D_A\sigma=0$.  Conversely, if $I_A\in\mE_A(0,0)$ is holomorphic, then $\theta=(\sigma\bar\sigma)^{-1}\btheta$ is pseudo-Einstein wherever $\sigma=Z^AI_A$ is nonzero.
\end{prop}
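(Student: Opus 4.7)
The plan is to reduce both directions to Lee's characterization of pseudo-Einstein contact forms~\cite{Lee1988}: $\theta$ is pseudo-Einstein if and only if, locally, $\theta=(\sigma\bar\sigma)^{-1}\btheta$ for some $\sigma\in\mE(1,0)$ with $\nabla_{\bar\beta}\sigma=0$ in the Tanaka--Webster connection determined by $\theta$; equivalently, $\sigma^{-(n+2)}$ is a local closed section of $\mK$. Granting this for the forward direction, I would fix such a CR-holomorphic $\sigma$ and verify $\nabla_{\bar\beta}\bD_A\sigma=0$ slot-by-slot. Evaluating $\bD_A$ on a weight $(1,0)$ density yields
\[ \bD_A\sigma=\bigl((n+1)\sigma,\;(n+1)\nabla_\alpha\sigma,\;-\nabla^\gamma\nabla_\gamma\sigma-i\nabla_0\sigma-\tfrac{n+1}{n+2}P\sigma\bigr). \]
Applying $\nabla_{\bar\beta}$ via~\eqref{tracconn}, the top slot is $(n+1)\nabla_{\bar\beta}\sigma=0$ immediately. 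For the middle and bottom slots, I would commute $\nabla_{\bar\beta}$ past the other operators using the commutator identities collected in \cite[Lemma~2.3]{Lee1988} so as to send $\nabla_{\bar\beta}$ to the right (where it annihilates $\sigma$); the residual curvature and torsion contributions must cancel precisely against the Schouten and torsion terms $h_{\alpha\bar\beta}\rho+P_{\alpha\bar\beta}\sigma$ and $iA_{\bar\beta}{}^\alpha\tau_\alpha-T_{\bar\beta}\sigma$ built into~\eqref{tracconn}. Pseudo-Einstein is precisely the hypothesis making this work: it implies $P_{(\alpha\bar\beta)_0}=0$ and $T_\alpha=0$ (the latter following from~\eqref{eqn:divP} when $n\geq 2$ and directly from the $n=1$ condition $\nabla_\alpha P=i\nabla^\gamma A_{\alpha\gamma}$).

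For uniqueness up to a unit phase, suppose $\sigma^\prime=f\sigma$ also induces $\theta$ and is CR-holomorphic; then $|f|^2=1$ and $\nabla_{\bar\beta}f=0$. Writing $f=e^{i\phi}$ with $\phi$ real, complex conjugation gives $\nabla_\beta\phi=0$, and the commutator $[\nabla_\alpha,\nabla_{\bar\beta}]\phi$ on $\mE(0,0)$ then forces $\nabla_0\phi=0$ as well, so $\phi$ is locally constant and $f$ is a constant phase. The converse is essentially tautological: given a holomorphic $I_A\in\mE_A(0,0)$, set $\sigma:=Z^AI_A\in\mE(1,0)$; the top-slot component of $\nabla_{\bar\beta}I_A=0$ reads $\nabla_{\bar\beta}\sigma=0$, and Lee's characterization then identifies $\theta=(\sigma\bar\sigma)^{-1}\btheta$ as pseudo-Einstein on the open set where $\sigma\neq 0$.

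The main obstacle will be the bookkeeping in the middle- and bottom-slot verification, where many curvature and torsion terms appear. The tractor connection is designed so that these cancel exactly for pseudo-Einstein data, but the cancellation must be exhibited explicitly; the bottom-slot analysis in particular requires two iterated commutators and uses both $P_{(\alpha\bar\beta)_0}=0$ and $T_\alpha=0$ in an essential way, while the middle-slot cancellation uses only the first of these together with the commutator $[\nabla_{\bar\beta},\nabla_\alpha]\sigma$ on $\mE(1,0)$.
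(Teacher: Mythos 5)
Your overall route is the same as the paper's: both directions are reduced to Lee's characterisation of pseudo-Einstein contact forms via closed sections of $\mK$ (equivalently, CR-holomorphic $\sigma\in\mE(1,0)$ with $\theta=(\sigma\bar\sigma)^{-1}\btheta$), after which the forward direction is a slot-by-slot check that $\nabla_{\bar\beta}\bD_A\sigma=0$; the paper merely shortcuts the bottom-slot computation by quoting \cite[Proposition~2.4]{GoverGraham2005}. Your converse and your uniqueness argument are fine.

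There is, however, a genuine error in the step you flag as essential. The pseudo-Einstein condition does \emph{not} imply $T_\alpha=0$. From the Bianchi identity~\eqref{eqn:divP}, $P_{\alpha\bar\beta}=\frac{P}{n}h_{\alpha\bar\beta}$ gives $\frac{1}{n}\nabla_\alpha P=\nabla_\alpha P+(n-1)T_\alpha$, i.e.\ $T_\alpha=-\frac{1}{n}\nabla_\alpha P$; likewise the $n=1$ condition $\nabla_\alpha R=i\nabla^\gamma A_{\alpha\gamma}$ yields $T_\alpha=-\nabla_\alpha P$, not $0$. The correct characterisation, recorded in the proof of Lemma~\ref{lem:dbard_logpluri}, is that $\theta$ is pseudo-Einstein if and only if $P_{(\alpha\bar\beta)_0}=0$ \emph{and} $\nabla_\alpha P+nT_\alpha=0$. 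Demanding $T_\alpha=0$ on top of $P_{(\alpha\bar\beta)_0}=0$ would force $\nabla_\alpha P=0$, i.e.\ locally constant Webster scalar curvature, which pseudo-Einstein structures need not have. Consequently the bottom-slot cancellation as you describe it — organised around $T_\alpha=0$ — would leave uncancelled terms proportional to $\nabla_\alpha P$ (equivalently to $T_\alpha$); the verification must instead be arranged so that the residual terms combine into multiples of $P_{(\alpha\bar\beta)_0}$ and of $\nabla_\alpha P+nT_\alpha$, both of which vanish. The proposition is true and your architecture is sound, but the computation needs to be redone with the correct identity before the proof is complete.
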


\begin{proof}
 Suppose that $\theta$ is a pseudo-Einstein contact form.  By~\cite[Lemma~7.2]{Hirachi1990} and~\cite[Theorem~4.2]{Lee1988}, locally there exists a closed form $\zeta\in\mK$ with respect to which $\theta$ is volume-normalized.  Let $\sigma\in\mE(1,0)$ be a $-(n+2)$-nd root of $\zeta$.  Then $\dbar_b\sigma=0$.  By~\cite[Proposition~2.4]{GoverGraham2005}, it holds that
 \[ -\frac{1}{n+1}\left(\nabla^\gamma\nabla_\gamma\sigma + i\nabla_0\sigma + \frac{n+1}{n+2}P\sigma\right) = -i\nabla_0\sigma + \frac{P}{n+2}\sigma . \]
 It is now straightforward to check that $\D_A\sigma$ is holomorphic.

 Conversely, suppose that $I_A$ is holomorphic and suppose that $\sigma=Z^AI_A$ is holomorphic.  Set $\zeta=\sigma^{-(n+2)}\in\mK$ and let $\theta$ be the unique contact form which is volume-normalized with respect to $\zeta$.  Since $I_A$ is holomorphic, $\zeta$ is closed, and hence $\theta$ is pseudo-Einstein~\cite{Hirachi1990,Lee1988}.
\end{proof}

\begin{prop}
 \label{prop:einstein_parallel}
 Let $(M^{2n+1},H)$ be a CR manifold.  Suppose that $\theta$ is an Einstein contact form.  Then locally there exists a $\sigma\in\mE(1,0)$, unique up to multiplication by a constant $\lambda\in\bC$ with $\lv\lambda\rv^2=1$, such that $\theta=(\sigma\bar\sigma)^{-1}\btheta$ and $\D_A\sigma$ is parallel; i.e.\ $\nabla_\beta \D_A\sigma=0$ and $\nabla_{\bar\beta}\D_A\sigma=0$.  Conversely, if $I_A\in\mE_A(0,0)$ is parallel, then $\theta=(\sigma\bar\sigma)^{-1}\btheta$ is Einstein wherever $\sigma:=Z^AI_A$ is nonzero.
\end{prop}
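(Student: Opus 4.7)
My plan is to mirror the structure of the proof of Proposition~\ref{prop:einstein_holomorphic}, strengthening the holomorphic condition $\nabla_{\bar\beta}\D_A\sigma=0$ to the full parallel condition $\nabla\D_A\sigma=0$; the extra mileage comes from the vanishing of torsion. For the forward direction, I would begin by observing that an Einstein contact form is in particular pseudo-Einstein, so Proposition~\ref{prop:einstein_holomorphic} locally produces $\sigma\in\mE(1,0)$ with $\theta=(\sigma\bar\sigma)^{-1}\btheta$ and $I_A:=\D_A\sigma$ holomorphic; the uniqueness of $\sigma$ up to a unit complex constant is inherited from that proposition. Exactly as in its proof, the components of $I_A$ in this pseudo-Einstein scale simplify to $((n+1)\sigma,\,(n+1)\nabla_\alpha\sigma,\,-(n+1)i\nabla_0\sigma+\tfrac{n+1}{n+2}P\sigma)$. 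It then remains to verify $\nabla_\beta I_A=0$ and $\nabla_0 I_A=0$, and this is done slot-by-slot using the tractor connection formulae of Section~\ref{CRtract}. The top slot of $\nabla_\beta I_A$ vanishes tautologically, and the top slot of $\nabla_0 I_A$ vanishes directly from the simplified $\rho$-slot expression.

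For the converse direction, given parallel $I_A\in\mE_A(0,0)$, I would set $\sigma:=Z^AI_A$ and read off the slots of $\nabla I_A=0$. The top slot of $\nabla_{\bar\beta}I_A=0$ gives $\nabla_{\bar\beta}\sigma=0$, so Proposition~\ref{prop:einstein_holomorphic} already forces $\theta=(\sigma\bar\sigma)^{-1}\btheta$ to be pseudo-Einstein wherever $\sigma\neq 0$. The top slot of $\nabla_\beta I_A=0$ identifies the middle slot of $I_A$ as $\tau_\alpha=\nabla_\alpha\sigma$, so the middle slot of $\nabla_\beta I_A=0$ reduces to $\nabla_\beta\nabla_\alpha\sigma+iA_{\alpha\beta}\sigma=0$; antisymmetrising in $\alpha,\beta$ and invoking the commutator identities of \cite{Lee1988} together with the pseudo-Einstein relations $P_{(\alpha\bar\beta)_0}=0$ and $\nabla_\alpha P+nT_\alpha=0$ will then force $A_{\alpha\beta}\sigma=0$, hence $A_{\alpha\beta}=0$ on the open set $\{\sigma\neq 0\}$, so $\theta$ is Einstein.

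The main obstacle I anticipate is the explicit slot-by-slot verification in the forward direction. In each of the middle and bottom slots of $\nabla_\beta I_A$ and $\nabla_0 I_A$, one has to manipulate the expression using the Lee commutator formulas and the Bianchi identities so that every surviving term is proportional to one of $A_{\alpha\beta}$, $P_{(\alpha\bar\beta)_0}$, or $\nabla_\alpha P+nT_\alpha$, each of which vanishes on an Einstein manifold. In particular, the middle slot of $\nabla_\beta I_A$ pinpoints the precise role of vanishing torsion: on a pseudo-Einstein manifold with $\nabla_{\bar\gamma}\sigma=0$, the commutator $\nabla_\beta\nabla_\alpha\sigma-\nabla_\alpha\nabla_\beta\sigma$ can be expressed purely in terms of torsion, and the resulting identity holds exactly when $A_{\alpha\beta}=0$. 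This bookkeeping is routine but tedious, and is the only non-structural step in the argument.
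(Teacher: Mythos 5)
Your structural skeleton (import existence, uniqueness, and the pseudo-Einstein property from Proposition~\ref{prop:einstein_holomorphic}, then check the extra parallelism slot-by-slot) is the same as the paper's, but your plan omits the one fact that makes both directions work, and without it the computation you describe does not close.  The missing observation is that in the scale $\theta=(\sigma\bar\sigma)^{-1}\btheta$ the density $\sigma\bar\sigma$ is parallel for the Tanaka--Webster connection of $\theta$, so $\bar\sigma\nabla_\beta\sigma+\sigma\nabla_\beta\bar\sigma=0$; combined with $\dbar_b\sigma=0$ (which holds in any pseudo-Einstein scale by Proposition~\ref{prop:einstein_holomorphic}) this forces $d_b\sigma=0$, i.e.\ $\nabla_\beta\sigma=0$.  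This is how the paper argues.  In the forward direction, without $\nabla_\beta\sigma=0$ the middle slot of $\nabla_\beta\D_A\sigma$ is $(n+1)\bigl(\nabla_\beta\nabla_\alpha\sigma+iA_{\alpha\beta}\sigma\bigr)$, and the term $\nabla_\beta\nabla_\alpha\sigma$ is \emph{not} expressible through $A_{\alpha\beta}$, $P_{(\alpha\bar\beta)_0}$, or $\nabla_\alpha P+nT_\alpha$ by any amount of commutator or Bianchi manipulation; your claim that ``every surviving term is proportional to one of'' these three quantities is false at exactly this slot.  Once $\nabla_\beta\sigma=0$ is in hand, the middle slot collapses to $iA_{\alpha\beta}\sigma$, the bottom slot to $\nabla_\beta\bigl(-i\nabla_0\sigma+\tfrac{P}{n+2}\sigma\bigr)$, and torsion-freeness together with constancy of $P$ finishes the verification immediately.

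The same omission breaks your converse.  Antisymmetrising $\nabla_\beta\nabla_\alpha\sigma+iA_{\alpha\beta}\sigma=0$ in $\alpha,\beta$ is vacuous: $A_{\alpha\beta}$ is symmetric, so the torsion term drops out, and the commutator $[\nabla_\beta,\nabla_\alpha]$ acting on $\sigma\in\mE(1,0)$ is governed by the $(2,0)$-part of the trace $\Pi_\gamma{}^\gamma$ of the curvature forms, which is $-iA_{\gamma\mu}\theta^\mu\wedge\theta^\gamma=0$ again by symmetry of $A$.  So you obtain $0=0$ rather than $A_{\alpha\beta}\sigma=0$.  The correct route is the one above: the top slot of $\nabla_{\bar\beta}I_A=0$ gives $\nabla_{\bar\beta}\sigma=0$, hence pseudo-Einstein and hence $\nabla_\beta\sigma=0$ in the scale $\theta$, and then the middle slot of $\nabla_\beta I_A=0$ reads $iA_{\alpha\beta}\sigma=0$ directly.  (A minor further point: the proposition's notion of parallel only involves $\nabla_\beta$ and $\nabla_{\bar\beta}$, so your proposed verification of $\nabla_0\D_A\sigma=0$ is not required.)
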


\begin{proof}
 Suppose that $\theta$ is Einstein.  Let $\sigma$ be as in Proposition~\ref{prop:einstein_holomorphic}, so that $I_A=\frac{1}{n+1}\D_A\sigma$ is holomorphic.  In the scale $\theta$, we have that $\lv\sigma\rv^2$ is parallel and hence, since $\dbar_b\sigma=0$, it holds that $d_b\sigma=0$.  It is then clear from~\eqref{tracconn} that $\nabla_\beta I_A=0$.

 Conversely, suppose that $I_A$ is parallel.  Set $\sigma=Z^AI_A$.  By Proposition~\ref{prop:einstein_holomorphic}, $\theta=(\sigma\bar\sigma)^{-1}\btheta$ is pseudo-Einstein.  Evaluating $\nabla_\beta I_A=0$ in the scale $\theta$ yields $A_{\alpha\beta}=0$, and hence $\theta$ is Einstein.
\end{proof}

\section{Tractors and the Fefferman ambient metric}\label{ambsection}

On a CR manifold the tractor calculus provides the basic invariant
calculus. It is the CR analogue of the calculus surrounding the
Levi-Civita connection in Riemannian geometry. We need to link this to
the Fefferman ambient metric for two reasons: First the CR GJMS
operators and the related basic objects are {\em defined} in terms of
the ambient metric. Second the ambient metric provides a powerful tool
for simplifying tractor calculus computations; this works well
because the ambient metric is effectively a
non-linear extension of the tractor bundle and connection that
captures these in terms of a K\"ahler metric (of mixed signature) and
connection, see in particular Theorem \ref{trthm} below.

Most components of this link between tractors and the ambient metric
are available in the literature.  To adapt and extend these as
required for our current purposes it is useful to first
understand the principal bundle structure equivalent to the tractor
connection, namely the Cartan connection. This provides a conceptual
framework for the tractor connection and its use. In particular, it
enables us below to construct and understand the Fefferman space
and the ambient connection from this perspective. To understand the
groups involved we first recall the model for CR geometry.

\subsection{The Cartan connection and the model}\label{Cartan}
Fix a complex vector space $\bV$ of complex dimension $n+2$, equipped with
a Hermitian inner product $\langle\ ,\ \rangle$ of signature
$(p+1,q+1)$, where $p+q=n$.  Let $\mN\subset\bV$ be the cone of
nonzero null vectors in $\bV$.
Then the image $S$ of $\mN$ in
the complex projectivisation $\Cal P\bV\cong\bCP^{n+1}$ has a CR
structure, and this provides the usual flat model for hypersurface
type CR geometry.

Denote by $G\cong SU(p+1,q+1)$ the special unitary group of
$(\bV,\langle\ ,\ \rangle)$.  Note that $G$ acts transitively on
$S$. Thus $S$ may be naturally identified with the homogeneous space
$G/P$, where $P\subset G$ is the isotropy subgroup of a nominated
point on $S$. Note that $P$ stabilises a complex 1-dimensional subpace
$\mathbb{V}^1$ in $\mathbb{V}$.

Restricting the standard representation of $G$ to the subgroup $P$, we
obtain the associated bundle $\cT= G \times_P\bV$. Since $\bV$
carries a representation of $G$, the map $G\times V\ni (g,v)\mapsto
(gP,g\cdot v)\in (G/P)\times \mathbb{V}$ descends to a canonical
trivialisation of $\cT= G \times_P\bV$. Thus $\cT$ has canonical
connection and this is the specialisation to $S$ of standard tractor
connection described (as a complex vector bundle) in \nn{tracconn}
above. In the flat homogeneous setting of $S=G/P$ this tractor
connection may be viewed as arising as an associated connection from the
Maurer--Cartan form $\omega_{\rm MC}$ on $G$.

On a general curved (hypersurface type) CR manifold $M$ one can
construct a principal bundle $\cG$ with fibre $P$, $P\to \cG\to M$, and
this is canonically equipped with a structure $\omega$, the {\em
  Cartan connection} \cite{CapSchichl2000,ChernMoser1974,Tanaka1976}. Indeed $\cG$ can be
recovered as an adapted frame bundle for the tractor bundle $\cT$ and
then the Cartan connection derived from the tractor connection of
\nn{tracconn}, see \cite{CapGover2002,CapGover2008}. The Cartan connection
should be viewed as a curved analgue of the Maurer--Cartan form, with
just weaker equivariance properties. Its characterising properties
are as follows. First, $\omega$ is a $\mathfrak{g}=\operatorname{Lie}(G)$ valued
1-form field on $\cG$ that provides a trivialisation of $T\cG$. Second, this
trivialisation is $P$-equivariant and reproduces the generators of
fundamental vector fields.  Finally, there is a notion of curvature for any
such Cartan connection and one requires that this satisfy a normalisation
condition defined in terms of Lie algebra cohomology. With these properties satisfied,  the pair
$(\cG,\omega)$ is uniquely determined up to isomorphism and is then
called the {\em normal} Cartan connection. The tractor connection
\nn{tracconn} is normal in this sense, in that the equivalent Cartan
connection is normal.

Given the Cartan bundle $\cG$ and any representation
of $P$, we may form associated vector bundles.
 For example, the tangent bundle is $\cG\times_P
\mathfrak{g}/\mathfrak{p}$, where $\mathfrak{p}=\operatorname{Lie}(P) $ and the
representation on $\mathfrak{g}/\mathfrak{p}$ is induced by the restriction to
$P$ of the adjoint representation. Although in general the Cartan
connection does not induce a linear connection on such associated
bundles, it does induce a connection on $\cG\times_P \mathbb{W}$ if
$\mathbb{W}$ is the restriction (to $P$) of a $G$-representation,
which we shall denote $\rho$.   A section $t\in
\Gamma\bigl( {\cG\times_P \mathbb{W}}\bigr)$ is represented by function
$\tilde{t}:\cG\to \mathbb{W}$ satisfying the equivariance property
$\tilde{t}(u\cdot q)=\rho(q^{-1})\tilde{t}(u)$, for all $u\in \cG$,
and $q\in P$.  The tractor connection is given by
\begin{equation}\label{trex}
\nabla_{Tp\cdot\xi}t(x)=\underline{u}\big(\xi\cdot\tilde t(u)+
\rho'(\omega(\xi))(\tilde t(u))\big),
\end{equation}
where $p:\cG\to M$ is the bundle map, $\xi\in T_u\Cal G$ is any
tangent vector, $\rho':\mathfrak{g}\to \operatorname{End}(\mathbb{W})$
denotes the representation of $\mathfrak{g}$ on $\mathbb{W}$, and
$\underline{u}\colon\bV\to \Cal T_x$, is the isomorphism from
$\mathbb{V}$ to the fiber of $\Cal T$ over $x\in M$ determined by $u$
(viewing $\cG$ as the adapted frame bundle for the standard tractor
bundle $\cT$).  Thus such bundles are called tractor bundles, and note that the
standard tractor bundle is induced from $\mathbb{V}$. So the
connection \nn{tracconn} induces a connection on any such tractor
bundle.


We conclude this subsection by discussing some other groups linked to the
geometry of the CR model.
Let $\mN \subset \mathbb{V}$ be the cone of non-zero null
vectors. The CR manifold of the model $S$ is the image of this under
complex projectivisation. We are interested also in the real
projectivisation.

Let $\MF$ be the space of all real rays in $\bV$ which are null for
the inner product $\langle\ ,\ \rangle_{\bR}$, the real part of
$\langle\ ,\ \rangle$. The space $\MF$ is a smooth hypersurface
in $\Cal P_{\bR_+}\bV \cong \mathbb{R}P^{2n+3}$, and we have an
obvious projection $\mN\to\MF$, which is a principal bundle
with fibre group $\bR_+$.

Any real null ray generates a complex null line containing it.  Thus there is a
smooth projection $\MF \to M$ which is a
fibre bundle over $M=S$,
with fibre the space $ S^1$ of real rays in $\bC$.

Let $\tilde G$ be  the
orthogonal group of $(\bV,\langle\ ,\ \rangle_{\bR})$, and let $
\PF \subset\tilde G$ be the stabiliser of a real null ray in
$\mathbb{V}$. In this case we observe that there is a transitive
action of $\tilde G$ on $\MF$ which leads to an identification
$\MF\cong\tilde G/\PF$. By construction, $\tilde G$ acts by
conformal isometries on $\MF$.  It is well known that this action
identifies $\tilde G/Z(\tilde G)$ with the group of conformal
isometries of $\MF$.

Now the subgroup $G\subset\tilde G$  acts
transitively on $\mN$, so it  also acts transitively on $\MF$.
Taking a real null ray and the complex null line generated by it as
the base points of $\MF$ and $M$, respectively, we see that
$G\cap\PF\subset P$, and $G\cap\PF$ is the stabiliser of a
real null line, so we also obtain the identification $\MF\cong
G/(G\cap\PF)$.  This is the model structure for the Fefferman
space that we describe below.  Again using that $G$ acts transitively
on $\mathcal{N}$ we see that $\mathcal{N}$ can be identified with
$G/(G\cap \tilde{Q} )$, where $\tilde{Q}$ is the subgroup of
$\PF$ fixing a nominated point in the ray defining $\PF$.

\subsection{The Fefferman space and ambient metric}\label{Fsec}

To a given CR manifold $M$ there are associated two equivalent
geometric structures, both due to Fefferman \cite{Fefferman1976};
these are the Fefferman space and the Fefferman ambient metric. The
latter associates to $M$ a K\"ahler manifold $(\MA,J^{\MA},h^{\MA})$
that is, in a suitable sense, approximately Ricci flat.
This also, by
construction, admits an action by a $\mathbb{C}^*$-parametrised family
of homotheties and is equipped with a distinguished real
hypersurface embedding $\iota:\cF\to \MA$ that is equivariant with
respect to this action. We identify $\cF$ with its image in
$\MA$ and note that (as will become clear) it is a generalising
analogue of the cone $\mN$ (of non-zero null vectors in
$\mathbb{V}$) described above in connection with the CR model.

Considering the $\mathbb{C}^*=\mathbb{C}\setminus \{0 \}$-action on
$\cF$, the orbit space $\cF/\mathbb{C}^*$ is naturally identified with
$M$. That is, $M=\cF/\mathbb{C}^*$ and we write $\pi_\cF:\cF\to M$ for
the natural quotient map.  To handle the link between tensorial
structures along $\cF$, in $\MA$, and the corresponding objects on
$M$, we use that the tractor connection on $M$ can be recovered from
the Levi-Civita connection on $\MA$ as follows.  For $x\in M$, we
denote by $\cF_x$ the fiber of $\cF$ over $x$, and we view $\cF_x$ as
a 2-dimensional submanifold of $\MA$ via $\cF_x\subset \cF \subset
\MA$.

Then $T(\MA)\,\big{|}_{\cF_x}$ denotes the tangent bundle to $\MA$
restricted to the submanifold $\cF_x$, a (real) rank $2n+4$ vector
bundle over $\cF_x$. Again using the construction of the ambient
manifold, it follows that the restriction of the ambient Levi-Civita
connection is flat without holonomy; so $T(\MA)\,\big{|}_{\cF_x}$ may
be globally trivialised by parallel sections.

The standard tractor bundle on $M$ may be realised as the complex rank
$n+2$  vector bundle $\cT\rightarrow M$ with fibre
\begin{equation}\label{tracdef}
\cT_x=\left\{U\in\Gamma\big(T^{1,0}(\MA)\,\big{|}_{\cF_x}\big):
\nabla^\mathcal{A}_v U=0 \,\, \mbox{for all} \,\, v\in \Gamma(T\cF) \,\, \mbox{vertical} \right\}.
\end{equation}
Here $v\in \Gamma(T\cF)$ vertical means that $v$ is a generator of the
$\mathbb{C}^*$ action.  Thus a section of $\cT$ on $M$ is a vector
field in $\MA$, defined along $\cF$, which is constant in the vertical
directions. It is easily verified that the Hermitian metric and
Levi-Civita connection on $\MA$ induce a Hermition metric and
connection on $\cT$.

\begin{thm}\label{trthm}
The metric and connection on $\cT$, induced by the  metric and
connection on $\MA$ agree, up to isomorphism, with the standard
(normal) CR tractor metric and connection introduced above.
\end{thm}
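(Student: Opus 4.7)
The plan is to prove Theorem~\ref{trthm} in two stages: first identify $\cT$, equipped with the Hermitian form induced by $\cg[\rr]$, with the standard tractor bundle $\mE_A$ equipped with $h_{A\bar{B}}$; then identify the induced connection with the standard CR tractor connection via uniqueness of the normal CR Cartan connection \cite{CapGover2002,CapGover2008,CapSchichl2000}.

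For the bundle-level identification, I would fix a pseudohermitian contact form $\theta$ on $M$, which corresponds to a scale $\sigma\bar\sigma\in\mE(1,1)$ and singles out a distinguished section of $\cF\to M$. Using Fefferman's construction, one obtains an ambient defining function $\rr\in\cmE(1)$ realising the K\"ahler metric $\cg[\rr]=-i\d\dbar \rr$ near $\cF$ and satisfying \eqref{eqn:ambient_metric_ricci}. Given $x\in M$, the fibre $\cF_x$ is a two-dimensional submanifold of $\MA$ on which the ambient Levi-Civita connection restricts to a flat connection without holonomy, so by \eqref{tracdef} a section of $\cT$ at $x$ is a vertically-parallel holomorphic section of $T^{1,0}\MA\rv_{\cF_x}$. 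In the frame determined by $\theta$, I would write each standard tractor $[\sigma,\tau_\alpha,\rho]_\theta$ as an explicit $\nabla^{\mathcal{A}}$-vertical-parallel vector field on $\cF_x$: the top slot corresponds to a $\bC^{\ast}$-Euler vector field along $\cF$, the middle slot to horizontal lifts of $T^{1,0}M$ via the contact distribution, and the bottom slot to a transverse direction dual to $\d \rr$. A direct check, using the composition series \eqref{tsplit} and the explicit form of $\cg[\rr]$, then shows that the resulting bundle map $\mE_A\to\cT$ is a complex-linear isomorphism intertwining $h_{A\bar{B}}$ with the restriction of $\cg[\rr]$; the signatures agree because both pair with the Hermitian form on $\bV$ in the flat model.

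For the connection, the essential point is that both the induced connection on $\cT$ and the standard CR tractor connection on $\mE_A$ preserve the Hermitian metric and depend canonically on the CR structure of $M$ alone: Fefferman's construction is unique modulo $O(\rho^{n+3})$, which is more than enough to pin down the one-jet data along $\cF$ needed to define the induced connection. By the equivalence between tractor connections and principal Cartan connections, it therefore suffices to verify that the connection induced from $\nabla^{\mathcal{A}}$ is \emph{normal} in the sense of vanishing Kostant codifferential; uniqueness of the normal CR Cartan connection then identifies it with the connection in \eqref{tracconn}. The main obstacle is this normality check. My plan is to translate the Kostant codifferential condition into the vanishing of a specific partial trace of the ambient Riemann curvature along $\cF$, and then to show that, modulo lower-order terms controlled by the Bianchi identity, this partial trace coincides with a component of $\Ric[\rr]$. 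The normalisation \eqref{eqn:ambient_metric_ricci} forces $\Ric[\rr]$ to vanish to order $\rho^n$, which is exactly enough to render the induced connection normal; once this translation of ``normality'' into ``near-Ricci-flatness'' is made precise, the uniqueness theorem for normal CR Cartan connections completes the proof.
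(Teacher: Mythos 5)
Your plan is viable, but it is not the route the paper takes. You propose the direct analogue of the conformal argument: build an explicit bundle isomorphism $\mE_A\to\cT$ in a chosen scale, check it intertwines $h_{A\bar B}$ with the restriction of $\cg[\rr]$, and then verify that the induced connection is \emph{normal} by translating the Kostant codifferential condition into a curvature trace killed by the approximate Ricci-flatness \eqref{eqn:ambient_metric_ricci}, finishing with uniqueness of the normal Cartan connection. The paper explicitly acknowledges that this ``analogy with the conformal case'' works, but deliberately avoids it: instead it factors the identification through the Fefferman space $\MF$, using Theorem~\ref{Fspaces} (from \cite{CapGover2008}) to identify the CR tractor bundle and connection on $M$ with the normal \emph{conformal} tractor bundle and connection on $\MF$, and then quoting \cite{CapGover2003,GoverPeterson2003} for the fact that the conformal tractor connection is induced by the Fefferman--Graham ambient connection. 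The trade-off is real: the paper's route reuses two published identifications and requires only checking their compatibility, whereas your route is self-contained but concentrates all the work in the step you flag as the ``main obstacle.'' Be aware that this step is harder in the CR setting than your sketch suggests: CR geometry is a $\lvert 2\rvert$-graded parabolic geometry, so the Kostant codifferential and the characterisation of normality in terms of torsion and curvature traces are more involved than in the $\lvert 1\rvert$-graded conformal case, and you would need to establish that translation (essentially redoing for $SU(p+1,q+1)$ what \cite{CapGover2003} does for $O(p+1,q+1)$) before \eqref{eqn:ambient_metric_ricci} can be brought to bear. Your observation that the uniqueness of $\rr$ modulo $O(\rho^{n+3})$ pins down far more than the jets needed for the induced connection is correct and is also implicit in the paper's treatment.
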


This theorem may be proved by analogy with the treatments of
corresponding conformal results, as in
\cite{CapGover2003,GoverPeterson2003}. However we can recover the
entire picture, and exploit results in the existing literature, by
using the Fefferman space as an intermediate step.

 Writing $\mathbb{R}_+$ to denote a chosen ray
in $\mathbb{C}^*$, we can view $\mathbb{C}^*$ as a
direct group product $\mathbb{C}^*=S^1\times \mathbb{R}_+$. Then the
map $\pi_\cF:\cF\to M$ factors into the composition of
$\pi_{\MF}:\MF\to M$ with $\pi:\cF\to \MF$, where
$\MF=\cF/\mathbb{R}_+$ is the $\mathbb{R}_+$ orbit space of
$\cF$. Then $\pi_{\MF}:\MF\to M$ has fibre $S^1$ and $\MF$ is the
Fefferman space; this has a canonical conformal structure induced by
the CR structure on $M$. In fact, $M_\cF$ is very easy to construct
directly via the Cartan and tractor bundle on $M$. This provides a
nice conceptual picture, but also a route to the recovery of the
ambient metric, the proof of Theorem \ref{trthm}, and other related
identifications that we need.

\subsection{The Fefferman space}\label{Fbasic}

\newcommand{\Om}{\Omega}
\newcommand{\cc}{\boldsymbol{c}}
\newcommand{\tct}{{\tilde{\Cal T}}}
\newcommand{\tM}{{\tilde M}}
\newcommand{\Rp}{\mathbb{R}_{+}}

Here we start again on the CR manifold $M$ and build a Fefferman space
and then later the ambient metric directly. Our treatment is brief
since for the first part of our construction further details may be
found in \cite{CapGover2008}, while for the second part mainly
similiar ideas are used.  Related earlier constructions of the
Fefferman space exist in \cite{BurnsDiederichShnider1977,Kuranishi1997,Lee1986}. In
our treatment we use the groups defined in Section \ref{Cartan}
above.

On a CR manifold $M$, recall that we write $\ce(-1,0)$ for the dual of
$\ce(1,0)$ (the chosen $(n+2)$nd root of the anticanonical bundle).
We define the \textit{Fefferman space} $\MF$ of $M$ to mean the
space of real rays in $\ce(-1,0)$ constructed as follows. Let $\Cal
F$ be (the total space of) the bundle obtained by removing the zero section in
$\ce(-1,0)$. There is a free right action of $\bC^*$ on $\Cal F$
which is transitive on each fibre. Restricting this action to the
subgroup $\Rp$, we define $\MF$ to be the quotient $\Cal
F/\Rp$. Hence $\pi_{\MF}:\MF\to M$ is a principal fibre bundle with
structure group $\bC^*/\Rp \cong S^1$.

Via the bundle inclusion $\ce(-1,0)\to \cT$ we see that we may
identify the total space $\ce(-1,0)$ with $\Cal G\x_P\bV^1$.  By construction, we can
therefore view $\MF$ as the associated fibre bundle $\Cal G\x_P\Cal
P_{\Rp}(\mathbb{V}^1)$   with fibre
the space of real rays in $\bV^1$.  Since $G$ acts transitively on
the cone of nonzero null vectors, $P$ acts transitively on the space
of real rays in $\bV^1$; the stabiliser of one of these rays is
$G\cap\PF$ and the stabiliser of a point in that ray $G\cap
\tilde Q$,  whence $\Cal P_{\Rp}\bV^1\cong
P/(G\cap\PF)$ and $\bV^1\cong P/(G\cap\tilde Q)$. Now $\MF=\Cal
G\x_P(P/(G\cap\PF))$ and $\cF=\Cal G\x_P(P/(G\cap\tilde Q))$ are naturally identified with the orbit spaces $\Cal G/(G\cap\PF)$ and $\Cal G/(G\cap\tilde Q)$, respectively. Hence we can view $\Cal G$ as a principal
bundle over $\MF$ with structure group $G\cap\PF$ and, alternatively, as
a   principal
bundle over $\cF$ with structure group $G\cap\tilde Q$.


\newcommand{\cW}{\mathcal{W}} Now for any closed Lie subgroup
$H\subset P$ we have the following observations. As for the cases just
described we have a manifold $\cG/H$. The bundle $\cG$ is a principal
bundle over this with fibre $P/H$, and the normal CR Cartan connection
$\omega \in \Omega^1(\cG,\mathfrak{g})$ also provides a Cartan connection
on $\cG\to \cG/H$. This is clear: The property that $\omega$ gives a
trivialisation of $\cG$ is not dependent on the base; the
$P$-equivariance of this trivialisation restricts to $H$-equivariance;
and the fundamental vector fields for $\cG\to \cG/H$ have generators
in $\operatorname{Lie}(H)\subset \mathfrak{p}$ so $\omega$
provides the map to these generators simply by restriction.
Similarly any representation $\mathbb{W}$ of $P$ then
determines an associated bundle $\cG\times_H \mathbb{W}$ over $\cG/H$
that corresponds to the bundle $\cG\times_P \mathbb{W}$ over
$M=\cG/P$.  In particular this applies to the case that $\mathbb{W}$
is a $G$-representation, so corresponding to each tractor bundle $\cW$
on $M$ there is a corresponding tractor bundle $\cW_H$ on $\cG/H$ and
the Cartan connection induces a tractor connection on
$\cW_H$. Furthermore since sections of the tractor bundle $\cW$ on $M$
correspond to functions $\cG\to \mathbb{W}$ that are $P$-equivariant,
it follows at once from the explicit formula \nn{trex} that
these are the same as sections on $\cW_H$ that are
parallel in the vertical directions of $\cG/H\to M$.

Thinking of the cases that $H$ is $G\cap \PF$ or $G\cap
\tilde{Q}\subset G\cap \PF $, we may apply these results in particular
to the standard representation $\mathbb{V}$ of $G$ to obtain the
associated bundle $\cT_H:=\Cal G\x_{H}\bV\to\cG/H$. The Hermitian
inner product on $\bV$ is $G$-invariant, so it gives rise to a
Hermitian bundle metric on $\cT_H$ of signature $(p+1,q+1)$.  Taking
the real part of this defines a real bundle metric $h_H$ of signature
$(2p+2,2q+2)$ on $\cT_H$.  The real ray $\bV_{\Rp}^1\subset\bV$,
stabilised by $G\cap\PF$ (or in the case of $H=G\cap \tilde{Q}$, the
point in $\bV_{\Rp}^1$) gives rise to an oriented real line subbundle
$\cT_H^1\subset\cT_H$ (respectively, this line subbundle with also a
nowhere zero distinguished section), and each of these lines
is null with respect to $h_H$.  Thus, defining $\cT_H^0$ to be the
real orthogonal complement of $\cT_H^1$, we obtain a filtration
$\cT_H=\cT_H^{-1}\supset\cT_H^0\supset\cT_H^1$ by smooth
subbundles. The real volume form on $\bV$ induces a trivialisation of
the highest real exterior power $\Lambda^{2n+4}\cT_H$.

Specialising to the case $H=G\cap \PF$ we come to the following. We write
$\cT_{\MF}$, $h_{\MF}$ rather than $\cT_H$, $h_H$, etc.

\begin{thm}[\cite{CapGover2008}] \label{Fspaces}
Let $(M,H,\ce(1,0))$ be a CR geometry of signature $(p,q)$. The
corresponding Fefferman space $\MF$ canonically carries a
conformal structure $\cc_{\MF}$ of signature $(2p+1,2q+1)$.

 The Cartan connection $\omega$ on $\Cal G$ induces a tractor connection
  $\nabla^{\cT_{\MF}}$ on the bundle $\cT_{\MF}\to \MF$, and $(\cT_{\MF},\cT_{\MF}^1,h_{\MF},\nabla^\cT_{\MF})$ is a standard tractor bundle for the natural
  conformal structure on $\MF$. The tractor connection $\nabla^\cT_{\MF}$ is
  normal.
\end{thm}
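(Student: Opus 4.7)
The plan is to realise $(\MF, \cc_\MF)$ as the base of a Cartan geometry of conformal type obtained from $(\cG,\omega)$ by extension of structure group, and then to verify that this coincides with the canonical normal conformal Cartan (equivalently, tractor) geometry on $\MF$. The algebraic backbone is the inclusion of real forms $\kg = \mathfrak{su}(p+1,q+1) \hookrightarrow \tilde\kg = \mathfrak{so}(2p+2,2q+2)$, under which the CR parabolic $H := G\cap \PF$ sits inside the conformal parabolic $\tilde P \subset \tilde G$ stabilising a real null ray in $\bV$.

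First I would produce the conformal structure. The bundle $\cG \to \MF$ is a principal $H$-bundle (as noted in the excerpt) and the restriction of $\omega$ gives a Cartan connection of type $(G,H)$ on $\MF$. The associated bundle $\cT_\MF = \cG\times_H \bV$, together with its distinguished null line $\cT_\MF^1$, its orthogonal complement $\cT_\MF^0$, and the real bundle metric $h_\MF$, provides exactly the algebraic data of a standard conformal tractor bundle of signature $(2p+1,2q+1)$: the quotient $\cT_\MF^0/\cT_\MF^1$ is canonically identified with $T\MF$ (up to twist by $\cT_\MF^1$), and $h_\MF$ descends to a well-defined conformal class $\cc_\MF$ on $T\MF$. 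A dimension and signature count matches the expected values.

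Next I would extend the Cartan geometry. Form the principal $\tilde P$-bundle $\tilde\cG := \cG\times_H \tilde P$ over $\MF$ and extend $\omega$ to a $\tilde\kg$-valued $1$-form $\tilde\omega$ on $\tilde\cG$ in the standard way (equivariantly using $\kg\hookrightarrow \tilde\kg$ and requiring that it reproduce generators of fundamental vector fields of the enlarged $\tilde P$-action). The three Cartan axioms for $\tilde\omega$ — trivialisation of the tangent bundle, $\tilde P$-equivariance, and reproduction of fundamental vector fields — follow from the corresponding properties of $\omega$ together with the fact that $\tilde\kg/\tilde\kp \cong \kg/\kh$ as $H$-modules under the embedding. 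A direct check then identifies the associated bundle $\tilde\cG\times_{\tilde P}\bV$, with its induced metric, filtration and connection, with $(\cT_\MF, \cT_\MF^1, h_\MF, \nabla^{\cT_\MF})$, so that these data do form a standard conformal tractor bundle for $\cc_\MF$ with the claimed induced connection.

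The remaining point, and the main obstacle, is normality: one must show that the curvature $\tilde\kappa$ of $\tilde\omega$ satisfies $\partial^*_{\mathrm{conf}}\tilde\kappa = 0$, where $\partial^*_{\mathrm{conf}}$ is the Kostant codifferential for the conformal parabolic $\tilde\kp\subset\tilde\kg$. The non-trivial content is that the CR normalisation $\partial^*_{\mathrm{CR}}\kappa = 0$ — which holds by hypothesis on $\omega$ — is compatible with the conformal Kostant codifferential through the inclusion $\kg \hookrightarrow \tilde\kg$. I would carry this out by an algebraic comparison: both codifferentials are computed from the same Killing-form machinery applied to a chain complex induced by the respective parabolic filtrations, and the filtration of $\tilde\kg$ restricts compatibly to the filtration of $\kg$; the real-form realisation of the embedding then allows one to express $\partial^*_{\mathrm{conf}}\tilde\kappa$ as the image of $\partial^*_{\mathrm{CR}}\kappa$ (plus terms automatically lying in $\kg$-components that vanish). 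By the uniqueness of the normal conformal Cartan connection, this identifies $(\tilde\cG,\tilde\omega)$ with the standard normal conformal Cartan geometry on $(\MF,\cc_\MF)$, whence $\nabla^{\cT_\MF}$ is the normal conformal tractor connection.
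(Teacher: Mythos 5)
The paper's own ``proof'' is essentially a deferral: it sketches how the conformal class arises (identify $\cT^0_{\MF}/\cT^1_{\MF}$ with a weighted twisting of $T\MF$, so that the tractor metric induces $\cc_{\MF}$) and then cites the abstract characterisation of the normal conformal standard tractor bundle from \cite{CapGover2002}, or the direct proof in \cite{CapGover2008}. Your proposal attempts to reconstruct that direct proof by extension of structure group from $H=G\cap\PF$ to the conformal parabolic $\tilde P\subset\tilde G$. The first two stages --- obtaining $\cc_{\MF}$ from $h_{\MF}$ via the identification of $\cT^0_{\MF}/\cT^1_{\MF}$ with a twisting of $T\MF$, verifying the Cartan axioms for $\tilde\omega$ using the $H$-module isomorphism $\tilde\kg/\tilde\kp\cong\kg/\kh$, and identifying $\tilde\cG\times_{\tilde P}\bV$ with $(\cT_{\MF},\cT_{\MF}^1,h_{\MF},\nabla^{\cT_{\MF}})$ --- are sound and consistent with the paper's sketch.

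The gap is in the normality step, which is the genuinely hard content of the theorem. The claim that $\partial^*_{\mathrm{conf}}\tilde\kappa$ is ``the image of $\partial^*_{\mathrm{CR}}\kappa$ plus terms automatically lying in $\kg$-components that vanish'' is not correct as stated and cannot be repaired by a purely formal compatibility of filtrations. The two Kostant codifferentials live on different chain complexes attached to different parabolics in different Lie algebras: $\kp\subset\kg$ stabilises a complex null line and $\kg/\kp$ has real dimension $2n+1$, while $\tilde\kp\subset\tilde\kg$ stabilises a real null ray and $\tilde\kg/\tilde\kp$ has real dimension $2n+2$; there is no commutative square intertwining them through $\kg\hookrightarrow\tilde\kg$. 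Indeed, preservation of normality under a Fefferman-type extension of structure group fails in general; that it holds here is a theorem requiring (i) a decomposition $\tilde\kg=\kg\oplus\kg^{\perp}$ as $\kg$-modules via the trace form, together with an argument that the $\kg^{\perp}$-component of $\partial^*_{\tilde\kp}\tilde\kappa$ vanishes (using regularity and torsion-freeness of the normal Cartan connection of an integrable CR structure), and (ii) control of the extra insertion direction in $\tilde\kg/\tilde\kp$ corresponding to the $S^1$-fibre of $\MF\to M$, where one uses that the curvature is horizontal over $M$. Without this analysis, the identification of $\nabla^{\cT_{\MF}}$ with the normal conformal tractor connection --- and hence the uniqueness argument you invoke at the end --- is not established; this is precisely the content of the direct proof in \cite{CapGover2008} to which the paper defers.
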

\begin{proof} It is staightforward to identify the quotient bundle
$\cT^0_{\MF}/\cT^1_{\MF}$ with a weighted twisting of $T\MF$. Using this,
  the conformal metric is then seen to be induced by the tractor
  metric, as in the usual conformal case. For more details (with a
  slightly different approach) see \cite[Theorem 2.1]{CapGover2008}.  The
  identification of $\cT_{\MF}$ and $\nabla_{\MF}$ with the usual
  conformal tractor bundle and connection follows from the
  characterisation of the latter (see \cite{CapGover2002}), or is proved in
  detail and directly in \cite[Theorem 2.3]{CapGover2008}.
\end{proof}

\begin{remark}
 In fact the conformal structure on $\MF$ carries a canonical spin
structure, but we do not need that here.
\end{remark}

Finally, we note that conformal tractor bundle
$(\cT_{\MF},\nabla^\cT_{\MF})$ inherits a complex structure
$\mathbb{J}_{\MF}$ corresponding to multiplication by $i$ on the defining
representation $\mathbb{V}$ of $G$. This is parallel for the tractor
connection because $G$ is complex linear and the Cartan connection is
$\mathfrak{g}$-valued. We may complexify $\cT$ and identify $\cT_{\MF}$ with the part $\cT^{1,0}_{\MF}$ in
$\mathbb{C}\otimes \cT_{\MF}$.

\newcommand{\R}{\mathbb{R}}
\newcommand{\gt}{g^{\mbox{\tiny{$M_\mathcal{A}$}}}}
\newcommand{\cGt}{\widetilde{\mathcal{G}}}
\newcommand{\cFt}{\widetilde{\mathcal{F}}}
\newcommand{\nt}{{\nabla^{\mA}}}

\subsection{The ambient metric} We
now specialise to the case $H=G\cap \tilde{Q}$. Then $\cG / H=\cF$ and
so over this we have the bundle $\cT_\cF=\cT_H$ and this is equipped
with the metric $h_\cF= h_H$ and a connection $\nabla^\cF$ preserving
$h_\cF$. From the above we see that the sections of $\cT_\cF\to \cF$ that
are parallel along the submanifolds generated by the $\mathbb{C}^*$
action may be identified with sections of $\cT\to M$ and
conversely. Similarly one sees that sections of $\cT_\cF\to \cF$
that are parallel in the vertical directions of $\cF\to \MF$ (which
coincide with the directions of the $\mathbb{R}_+$ action) are the
same as sections of $\cT_{\cF}\to \MF$.

Now it is easily verified that, at each point $p$ of $\cF$,
$\cT^0_{\cF}$ can be naturally identified with $T_p\cF\cong
T_p^{1,0}\cF \subset T_\mathbb{C}\cF$, and $\cT^1_{\cF}$ with the vertical
subspace (with respect to $\pi:\cF\to \MF$) therein.  We write ${\bf
  g}$ for the restriction of $h_\cF$ to $T\cF\cong \cT_\cF^0$; i.e.\ ${\bf g}$ on $\cF$ is defined for $X$, $Y\in T_p\cF$ by ${\bf
  g}(X,Y)=h^\cF_x(X,Y)$.


Using these observations we have that the quotient space
$\cT^0_{\cF}/\cT^1_{\cF}$, at $p$, is naturally identified with
$T_{\pi(p)}\MF$. So each point $p$ of $\cF$ determines a metric
$g_{\pi(p)}$ (from the conformal class of the Fefferman space) by
lifting tangent vectors in $T_{\pi(p)}\MF$ to vectors in $T_p\cF$ and
evaluating using the pairing ${\bf g}$.  The result is independent of
the choice of lift because the subbundle $\cT^1_{\cF}$ is orthogonal
to $\cT^0_\cF$ (as mentioned above). For $x\in \MF$, distinct points
of the fibre $\pi^{-1}(x)$ determine distinct metrics on $T_x\MF$
interwining the $\mathbb{R}_+$ action. Thus $\cF$ may be identified
with the natural ray bundle
$$
\{(x,g_x):x\in \MF, g\in \cc_{\MF} \}\subset S^2T^*\MF
$$ of conformally related metrics over $\MF$; so metrics in the
conformal class $\cc_{\MF}$ are sections of the metric bundle $\cF$.  Let
$\delta_s:\cF\rightarrow \cF$ denote the dilations defined by
$\delta_s(x,g_x)=(x,s^2g_x)$, $s>0$, and let
$T=\frac{d}{ds}\delta_s|_{s=1}$. So $T$ is the infinitesimal generator
of the dilations.

\newcommand{\cA}{\mathcal{A}}

Now the ambient space is constructed as follows.
Regard $\cF$ as a hypersurface in $M_\cA=\cF\times \R$ via
$\iota(z)=(z,0)$, $z\in\cF$.  The variable in the $\R$ factor is denoted
$\rho$. In the language of \cite{FeffermanGraham2012},
a straight pre-ambient metric
for $(\MF,\cc_{\MF})$ is a smooth metric $\gt$ of signature $(p+1,q+1)$ on a
dilation-invariant neighborhood $\cFt$ of $\cF$ satisfying
\begin{enumerate}
\item[(1)] $\delta_s^* \gt =s^2 \gt\quad$ for $s>0$;
\item[(2)] $\iota^* \gt={\bf g}$;
\item[(3)] $\nt T =Id$, where $Id$ denotes the identity endomorphism and
$\nt$ the Levi-Civita connection of $\gt$.
\end{enumerate}

Now if $S_{IJ}$ is a symmetric 2-tensor field on an open neighborhood
of $\cF$ in $\cF \times \R$ and $m \geq 0$, we write $S_{IJ} =
O^+_{IJ}( \rho^m)$ if $S_{IJ} = O(\rho^m)$ and for each point $p\in
\cF$, the symmetric 2-tensor $(\iota^*(\rho^{-m}S))(p)$ is of the form
$\pi^*t$ for some symmetric 2-tensor $t$ at $x=\pi(p)\in \MF$
satisfying $\operatorname{tr}_{g_x} t = 0$.
Since the dimension
$2n+2$ of $\MF$ is even, an {\em ambient metric} for $(\MF,\cc_{\MF})$
is a straight pre-ambient metric $\gt$ such that $\Ric(\gt)=O^+_{IJ}(
\rho^{n})$.  From \cite{FeffermanGraham2012}, there exists an ambient
metric for $(\MF,\cc_{\MF})$ and it is unique up to addition of a term
which is $O^+_{IJ}( \rho^{n+1})$ and up to pullback by a
diffeomorphism defined on a dilation-invariant neighborhood of $\cF$
which commutes with dilations and which restricts to the identity on
$\cF$.  Since $\MF$ is a Fefferman space, an invariant natural density
obstructs the existence of smooth solutions to
$\Ric(\gt)=O(\rho^{n+1})$.

Next we note that the restriction of the ambient Levi-Civita
connection $\nabla^\cA$ to $T\MA|_\cF$ agrees (up to isomorphism) with
the normal tractor connection on $\cT_\cF$. This follows by combining
the results in \cite{CapGover2003} (and see also
\cite{GoverPeterson2003}) with Theorem \ref{Fspaces}.  The former show
that the usual conformal tractor connection is induced by the
Fefferman--Graham ambient connection. By the uniqueness of the
normal conformal tractor connection, it then follows from Theorem
\ref{Fspaces} that the conformal tractor connection on $\MF$
determines the usual CR tractor connection on $M$. It is easily
verified that both steps are compatible with our claim above Theorem
\ref{trthm} for the way in which the tractor bundle arises from
$T\MA|_\cF$ and that the CR tractor connection is then induced in the
obvious way from the ambient connection.

Putting the above together we see that Theorem \ref{trthm} is proved.


Finally, we claim that $(M_{\cA}, g_{\cA})$ can be taken to be K\"ahler.  Indeed, since $M$ is embedded, Fefferman's original construction (cf.\ \cite{Hirachi2013,HirachiMarugameMatsumoto2015}) produces an ambient metric which is K\"ahler.  By the uniqueness of ambient metrics explained above, this latter metric can be taken to be $(M_{\cA},g_{\cA})$.

\subsection{Tractors via the ambient metric}
Theorem \ref{trthm} and its proof show that any section $V$ of the
standard tractor bundle on $M$ may be identified with the restriction
to $\cF$ of a vector field $\tilde{V}$ on $\MA$ that is parallel in
the vertical directions of $\MA\supset \cF\to M$.  These directions
are generated by the $\mathbb{C}^*$ action on $\cF$ mentioned above.
 Taking tensor
powers we may assume that any tractor field on $M$ arises from a
tensor field on $\MA$ that is parallel along the $\mathbb{C}^*$-orbits of $\mF$.

Now in view of our treatment of Theorem \ref{trthm} above, creating the
Rosetta stone relating tractor operators and similar objects to their
ambient equivalents can be broken into  a two step process. First we
have the bijection between such objects on $M$ and their equivalents on the
Fefferman space $\MF$, then second we have the bijection between these
tractor tools on the Fefferman space and their equivalents on the
ambient manifold.  But the first part of this is treated in
\cite{CapGover2008}, see especially Sections 3.2 to 3.7. The second is
treated in \cite{CapGover2003}, \cite{GoverPeterson2003}, and
\cite{GoverPeterson2005}. Only some minor additional input is required
to specialise the latter to the case that conformal tractor bundle has
a parallel complex structure. In fact conformal manifolds with a
parallel complex structure are treated in Sections 4.4 to 4.7 of
\cite{CapGover2008} (cf.\ \cite{Leitner2007}).

Putting these results together we can simply read off the
correspondence between the canonical CR tractor fields and operators
and their (Fefferman CR) ambient metric counterparts.  For example, the
canonical tractor $Z^A$ giving the inclusion \nn{basic-incl}
corresponds to the $(1,0)$-part $E$ of the generator $\boldsymbol{X}$
of the standard $\mathbb{R}_+$-action on the ambient manifold. This
vector field, which we also denote by $Z^A$, provides
an {\em Euler operator} $E$ on the ambient manifold, in that if a
function $f:\MA\to \mathbb{C}$ is homogeneous of degree $w$, with
respect to the $\mathbb{C}^*$ action on $\mF$, then $E\cdot f=
Z^A\nabla_A f=w f$ along $\mF$. CR densities in $\Gamma(\ce(w,w'))$
correspond to functions $f$ on $\cF$ that satisfy
$Z^A\nabla^{\MA}_A f = w f$ and $\bar{E}\cdot
f=Z^{\bar{A}}\nabla^{\MA}_{\bar{A}} f =w' f$. We shall always extend
such functions to $\MA$ with the same homogeniety. 
Putting this
together with our earlier convention, weighted tractor fields on $M$
are associated with homogeneous tensor fields on $\MA$ that
satisfy the same transport equations.

The calculus of the conformal ambient metric is by now well-known from
\cite{FeffermanGraham2012,GJMS1992}; see also \cite{CapGover2003,GoverPeterson2003}. From this we
can read off useful identities for the case considered here. Since
$\boldsymbol{X}$ is a homothetic gradient field such that
$\nabla^{\MA}\boldsymbol{X}$ is the identity endomorphism field, we
have
\begin{equation}\label{ids}
\nabla^{\MA}_A Z^B =\delta^B_A \quad \mbox{ and } \quad \nabla^{\MA}_{\bar{A}} Z^B =0.
\end{equation}
Thus
\begin{equation}\label{wts}
[E,Z^A]=Z^A,  \quad \mbox{ and } \quad [E,Z^{\bar{A}}]=0,
\end{equation}
and
$$
Z^AR_{A\bar{B}}{}^C{}_D=0= Z^{\bar{B}}R_{A\bar{B}}{}^C{}_D.
$$
It also follows that
\begin{equation}\label{eqn:DeltaE}
[E,\nabla^{\MA}_A]=-\nabla^{\MA}_A, \quad [E,\nabla^{\MA}_{\bar{B}}]=0, \quad \mbox{and so}\quad [E,\Delta^{\MA}]=-\Delta^{\MA}.
\end{equation}
It also follows from the definition of $Z^A$ that $r=Z^AZ_A$
is a defining function for $\cF\subset\MA$, in that $\cF$ is the zero locus of $r$, and from \nn{ids} that
$\nabla^{\MA}_A r=Z_A$; in particular, $\nabla_Ar$ is non-vanishing along
$\cF$.

Putting all this together, it follows that the CR
double-D operators $\bD_{AB}$, $\bD_{A\bar{B}}$ on weighted tractor fields
correspond to the ambient operators
$$
D_{AB}=Z_B\nabla^{\MA}_A-Z_A \nabla^{\MA}_B \quad \mbox{and} \quad D_{A\bar{B}}=Z_{\bar{B}}\nabla^{\MA}_A-Z_A \nabla^{\MA}_{\bar{B}},
$$
respectively. From this it follows that the tractor-D operator
$\bD_A$ corresponds to
$$
D_A = (n+E+ \bar{E}+1)\nabla^{\MA}_A-Z_A\Delta^{\MA}
$$
on $\MA$.
 Note that these ambient operators are defined on all of $\MA$, but
strictly it is only along the hypersurface $\cF$ (and restricted to
homogeneous tensor fields) that they correspond precisely to the given
tractor operators. Along $\cF$ they each act tangentially, meaning
that they do not depend on how the given tensor field is smoothly extended off
$\cF$. This follows because $r$ is a defining function for $\cF$ and
$$
[D_{AB},r]=[D_{A\bar{B}},r]=0,
$$
while $ [D_{A},r]=r\cdot {\rm Op}$ for some differential operator ${\rm Op}$.

We need the following technical result.
\begin{lem}\label{logdec}
Let $\tilde\tau$ be a function on $\MA$ which is homogenous of bidegree
$(w,w')$ on $\MA$ and let $\tau $ denote the corresponding density on
$M$. Then
$$
[D_AD_{\bar{B}},\log \tilde{\tau}]|_\cF
$$
is a homogeneous linear differential operator along $\cF$ and so,
applied to functions homogeneous of degree $(0,0)$, determines a linear
differential operator from sections of $\mathcal{E}(0,0)$ (over $M$) to sections of the
tractor bundle $\cT_{A\bar{B}}(-1,-1)$. This tractor operator  agrees
with
$$
[\D_A\D_{\bar{B}},\log \tau]:\ce(0,0)\to \cT_{A\bar{B}}(-1,-1)
$$
calculated in any scale.

Similarly,
$$
[[D_AD_{\bar{B}},\log \tilde{\tau}],\log \tilde{\tau}] (1) |_\cF
$$
(where $1$ denotes the unit-valued constant function) is a
homogeneous tensor along $\cF$, so descends to a tractor field in
$\cT_{A\bar{B}}(-1,-1)$, and this agrees with
$$
[[\D_A\D_{\bar{B}},\log \tau],\log \tau] (1)
$$
as calculated in any scale.
\end{lem}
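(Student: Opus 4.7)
I would prove both statements by direct ambient computation, exploiting that the only non-homogeneous behaviour of $\log\tilde\tau$ is captured by the commutators $[E,\log\tilde\tau]=w$ and $[\bar E,\log\tilde\tau]=w'$, which are constants, together with $[\nabla^{\MA}_A,\log\tilde\tau]=\tilde\tau^{-1}\nabla^{\MA}_A\tilde\tau$, which is a homogeneous function of bidegree $(-1,0)$. Since $D_A=(n+E+\bar E+1)\nabla^{\MA}_A-Z_A\Delta^{\MA}$ is built from these ingredients and $Z_A$, expanding $[D_AD_{\bar B},\log\tilde\tau]$ by the Leibniz rule produces a linear differential operator on $\MA$ whose coefficients are homogeneous tensor fields of the form $\tilde\tau^{-k}\nabla^{\MA}\!\dotsm\nabla^{\MA}\tilde\tau$; in particular no $\log\tilde\tau$ survives. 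Applied to $f$ of bidegree $(0,0)$, the result is a section along $\cF$ of bidegree $(-1,-1)$.

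I would next verify tangentiality along $\cF$ using the identities $[D_{AB},r]=[D_{A\bar B},r]=0$ and $[D_A,r]=r\cdot\mathrm{Op}$ recorded earlier, together with $[r,\log\tilde\tau]=0$ (both are ordinary functions on $\MA$). These imply that the restriction of $[D_AD_{\bar B},\log\tilde\tau]$ to $\cF$ is independent of the ambient extension of $f$, and that a change of ambient extension $\tilde\tau\mapsto\tilde\tau+r\phi$, which shifts $\log\tilde\tau$ by an $O(r)$ term, affects the value on $\cF$ only through contributions that vanish there. This yields a well-defined operator $\mathcal{E}(0,0)\to\cT_{A\bar B}(-1,-1)$. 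Agreement with the tractor commutator $[\D_A\D_{\bar B},\log\tau]$ computed in any scale then follows from the ambient-to-tractor dictionary of Section~\ref{ambsection}: $E,\bar E$ correspond to $\bw,\bw'$; ambient derivatives restrict along $\cF$ to the tractor-coupled Tanaka--Webster connection; $D_A$ corresponds to $\D_A$; and $\tilde\tau^{-1}\nabla^{\MA}_A\tilde\tau$ corresponds to $\tau^{-1}\nabla_\alpha\tau$ on $M$. Under this dictionary the ambient expansion matches, term by term, the tractor expansion obtained from Proposition~\ref{prop:ddbar-weight} via the commutator rules $[\bw,\log\tau]=w$, $[\bw',\log\tau]=w'$, $[\nabla_\alpha,\log\tau]=\nabla_\alpha\log\tau$; scale independence is automatic since the ambient side does not involve a choice of scale.

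The double commutator $[[D_AD_{\bar B},\log\tilde\tau],\log\tilde\tau](1)$ is handled identically: since no $\log\tilde\tau$ survives the first commutator, the second commutator acts on an operator already having homogeneous coefficients, and evaluation on the constant function $1$ produces a homogeneous tensor field on $\cF$ of bidegree $(-1,-1)$ with the identification to the tractor double commutator following from the same dictionary. The main obstacle is algebraic bookkeeping: one must verify that $[\Delta^{\MA},\log\tilde\tau]$, the only genuinely new commutator beyond those required for treating $D_A$, is a first-order operator whose coefficients $\nabla^{\MA}\log\tilde\tau$ and $\Delta^{\MA}\log\tilde\tau$ are themselves free of $\log\tilde\tau$ (being ratios of derivatives of $\tilde\tau$ by powers of $\tilde\tau$), and then track operator orderings carefully enough to confirm the claimed homogeneous form of the expansion.
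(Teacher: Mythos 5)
Your proposal is correct in substance, but it takes a genuinely different route from the paper at the one step that carries the real content, namely the identification of the ambient commutator with the tractor commutator computed in a scale. You propose to expand $[D_AD_{\bar B},\log\tilde\tau]$ directly via the Leibniz rule, observe that the only surviving coefficients are homogeneous (since $[E,\log\tilde\tau]=w$, $[\bar E,\log\tilde\tau]=w'$ are constants and $[\nabla^{\MA}_A,\log\tilde\tau]=\nabla^{\MA}_A\log\tilde\tau$, $[\Delta^{\MA},\log\tilde\tau]$ have log-free homogeneous coefficients), and then match the resulting expansion term by term against Proposition~\ref{prop:ddbar-weight} together with the rules $[\bw,\log\tau]=w$, $[\bw',\log\tau]=w'$. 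The paper instead routes the identification through the double-$D$ operators: it verifies directly that $[D_{AB},\log\tilde\tau]|_\cF$ and $[D_{A\bar B},\log\tilde\tau]|_\cF$ are (zeroth-order) multiplication by homogeneous tensors corresponding to $[\D_{AB},\log\tau]$ and $[\D_{A\bar B},\log\tau]$, and then uses the algebraic formulae expressing $\D_A$, $\D_{\bar B}$ as compositions of $\D_{AB}$, $\D_{A\bar B}$ (with ambient counterparts) plus the Leibniz property to transfer the correspondence to $[D_AD_{\bar B},\log\tilde\tau]$; this also explains why the ``matrix formula'' for $\D_A$ with weight operators is the right meaning of $\D_A$ on log densities. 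The paper's route buys a structural argument in which the only computation is first order and the exact identities $[D_{AB},r]=[D_{A\bar B},r]=0$ do the tangentiality work for free; your route buys an explicit link to the scale formula of Proposition~\ref{prop:ddbar-weight} at the cost of a full second-order expansion on both sides, and it leaves the heaviest step (``the ambient expansion matches, term by term'') as asserted bookkeeping rather than derived structure. Your treatment of extension-independence (for both $f$ and $\tilde\tau$, via $[D_A,r]=r\cdot\mathrm{Op}$ and the Leibniz rule applied to $\log(\tilde\tau+r\phi)=\log\tilde\tau+O(r)$) and your observation that scale-independence on the tractor side follows from the manifest scale-independence of the ambient side are both sound and consistent with the paper.
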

\begin{proof}
The first statement is immediate from the definition of the ambient
operators $D_{A}$ and $D_{\bar{B}}$. Note also that, from the
tangentiality of these operators, $ [D_AD_{\bar{B}},\log
  \tilde{\tau}]|_\cF $ is independent of how $\tilde{\tau}$ extends
off $\cF$. Then recall that, by definition, $\log \tau$ means the
log-density that corresponds to $\log \tilde{\tau}$.

Next by direct calculation one verifies that
$$
[D_{AB},\log \tilde{\tau}]|_\cF \quad \mbox{and}\quad  [D_{A\bar{B}},\log \tilde{\tau}]|_\cF
$$
are also homogeneous and correspond to the tractor commutators
$$
[\D_{AB},\log \tau] \quad \mbox{and}\quad  [\D_{A\bar{B}},\log \tau]|_\cF .
$$ But it is easily verified that there are algebraic formulae for
$\D_A$ and $\D_{\bar{B}}$ in terms of compositions of $\D_{AB}$ and
$\D_{A\bar{B}}$ and these are equivalent to corresponding formulae for
$D_A$ and $D_{\bar{B}}$ in terms of compositions of $D_{AB}$ and
$D_{A\bar{B}}$ on $\MA$; cf.\ \cite{CapGover2003} for the analogous conformal case. (Using the Leibniz property of $\D_{AB}$ and
$\D_{A\bar{B}}$, one sees that the formulae formulae for $\D_A$ and
$\D_{\bar{B}}$ in terms of $\D_{AB}$ and $\D_{A\bar{B}}$ give the
usual matrix formulae for $\D_A$ and $\D_{\bar{B}}$ when applied to
log densities.)  Thus $[D_{\bar{B}},\log \tilde{\tau}]|_\cF$ and
$[D_A,\log \tilde{\tau}]|_\cF$ are equivalent to well-defined fields
and these are $[\D_{\bar{B}},\log \tau]$ and $[D_A,\log \tau]$, as
calculated in any scale.

The final statement follows similarly.
\end{proof}

In the subsequent sections, we normally omit the superscript $\MA$
in the notation for ambient objects, relying instead on context.  Also,
we often identify tractor operators with the corresponding
tangential ambient space operators without comment.





\section{Tractor formulae for CR GJMS operators}
\label{sec:algorithm}

It is straightforward to compute from the definitions of $r$, $Z_A$ and $E$ that
\begin{equation}
 \label{eqn:Deltar_commutator}
 [\Delta,r]=n+E+\bar E+2 .
\end{equation}
It follows that, as an operator on ambient tensors of weight $\left(-\frac{n+3-k}{2},-\frac{n+3-k}{2}\right)$,
\begin{equation}
 \label{eqn:Deltar_commutator_tangential}
 [\Delta^k, r] = r\cdot\mathrm{Op} .
\end{equation}
Thus if $f\in\mE\left(-\frac{n+1-k}{2},-\frac{n+1-k}{2}\right)$, then
\begin{equation}
 \label{eqn:crgjms_defn}
 P_{2k}f = \left.(-2\Delta)^k\tilde f\right|_{\mQ}
\end{equation}
is independent of the choice of homogeneous extension $\tilde f$ of $f$ to $\MA$.  Let $k\leq n+1$.  Since the ambient metric is uniquely determined up to $O^+(\rho^{n+1})$, the operator~\eqref{eqn:crgjms_defn} is well-defined, and hence defines the CR GJMS operator of order $2k$.  Our goal in this section is to develop an algorithm which converts~\eqref{eqn:crgjms_defn} into a tractor formula.

To derive tractor formulae for the CR GJMS operators, we first need to derive some useful identities on the ambient manifold $\MA$.
In the following, $\Delta$ is the K\"{a}hler Laplacian
$\Delta=g^{A\bar{B}}\nabla_A\nabla_{\bar{B}}$
and $R_{A\bar{B}}{}^{\bar{C}}{}_{\bar{D}}$ is the $(1,1)$-part of the
ambient curvature. We write $R_{A\bar{B}}\sharp$ to denote the usual
action of this 2-form-valued endomorphism field on ambient
tensors. Since $g$ is K\"ahler, we have the operator equations on ambient tensors
\begin{align}
 \label{eqn:nablanabla} [\nabla_A,\nabla_B] & = O(r^{n-1}), \\
 \label{eqn:nablanablabar} [\nabla_A,\nabla_{\bar B}] & = R_{A\bar B}\hash;
\end{align}
e.g.\ on a vector field $T^A$ we have
$[\nabla_A,\nabla_{\bar{B}}]T^C=-R_{A\bar{B}}{}^C{}_DT^D$.  With these conventions we obtain the following:

\begin{lem}
 \label{lem:general_commutators}
 As operators on arbitrary ambient tensors,
 \begin{align}
  \label{eqn:DeltaZ} \left[\Delta, Z_{\bar{B}}\right] & = \nabla_{\bar{B}} , \quad \left[\Delta, Z_{A}\right]  = \nabla_{A} \\
  \label{eqn:Deltanabla} \left[\Delta,\nabla_A\right] & = -R_A{}^C\hash\nabla_C +O(r^{n-1}), \quad  \left[\Delta,\nabla_{\bar B}\right]  = R_{C\bar B }\hash\nabla^C + O(r^{n-1}) , \\
  \label{eqn:DeltaZZbar} \left[\Delta,Z_AZ_{\bar B}\right] & = Z_A\nabla_{\bar B} + Z_{\bar B}\nabla_A + H_{A\bar B}, \\
  \label{eqn:DeltaZnablabar}[\Delta,Z_{\bar{B}}\nabla_A] &=
  \nabla_{\bar{B}}\nabla_A- Z_{\bar{B}}R_A{}^C\hash\nabla_C +O(r^{n-1}),\\
  \label{eqn:Deltanablabarnabla}  \left[\Delta,\nabla_A\nabla_{\bar B}\right] & =- R_A{}^C\hash\nabla_C \nabla_{\bar{B}} + \nabla_{A}R_{C\bar{B}}\hash\nabla^C + O(r^{n-2}).
 \end{align}
 Moreover, as operators on ambient functions,
 \begin{align}
  \label{eqn:Deltanablafn} \left[\Delta,\nabla_A\right] & = O(r^{n}), \qquad  \left[\Delta,\nabla_{\bar B}\right]  = O(r^{n}) , \\
  \label{eqn:Deltanablabarnablafn} \left[\Delta,\nabla_A\nabla_{\bar B}\right] & = -R_A{}^C\hash\nabla_C\nabla_{\bar B} + O(r^{n-1}) .
 \end{align}
\end{lem}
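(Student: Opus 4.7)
The plan is to verify each identity by direct computation from $\Delta = g^{C\bar D}\nabla_C\nabla_{\bar D}$, commuting past the operator in question and repeatedly using three groups of building blocks: (i) the transport equations $\nabla_A Z^B = \delta_A^B$ and $\nabla_{\bar A}Z^B = 0$, which upon lowering with the parallel Hermitian metric also give $\nabla_A Z_B = 0$, $\nabla_{\bar A}Z_B = g_{\bar A B}$, and the complex conjugates; (ii) the K\"ahler commutators \eqref{eqn:nablanabla}--\eqref{eqn:nablanablabar}; and (iii) the ambient Einstein condition $R_{A\bar B} = O(r^n)$, which combined with the contracted Bianchi identity yields $\nabla^{\bar D}R_{A\bar D} = \nabla_A R = O(r^{n-1})$. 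Throughout I would exploit the general Leibniz rule $[\Delta, AB] = [\Delta, A]B + A[\Delta, B]$, which holds for any operators $A, B$.

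First I would prove \eqref{eqn:DeltaZ}: the identities $\nabla_{\bar D}Z_{\bar B} = 0$ and $\nabla_C Z_{\bar B} = g_{C\bar B}$ yield $\nabla_C\nabla_{\bar D}(Z_{\bar B}T) = g_{C\bar B}\nabla_{\bar D}T + Z_{\bar B}\nabla_C\nabla_{\bar D}T$, and contracting with $g^{C\bar D}$ isolates the stated commutator. A single iteration of the same calculation proves \eqref{eqn:DeltaZZbar}, identifying $H_{A\bar B} = g_{A\bar B}$ from the algebraic contraction $g^{C\bar D}g_{A\bar D}g_{C\bar B} = g_{A\bar B}$. Identity \eqref{eqn:Deltanabla} follows from the split
\[ [g^{C\bar D}\nabla_C\nabla_{\bar D}, \nabla_A] = g^{C\bar D}\nabla_C[\nabla_{\bar D}, \nabla_A] + g^{C\bar D}[\nabla_C, \nabla_A]\nabla_{\bar D}, \]
in which the second summand is $O(r^{n-1})$ by \eqref{eqn:nablanabla} and the first expands via \eqref{eqn:nablanablabar} as $-(\nabla_C R_A{}^C)\hash - R_A{}^C\hash\nabla_C$, whose divergence contribution is $O(r^{n-1})$ by Bianchi. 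Identity \eqref{eqn:DeltaZnablabar} is then an immediate composition through the Leibniz rule, and \eqref{eqn:Deltanablabarnabla} follows analogously from $[\Delta, \nabla_A\nabla_{\bar B}] = [\Delta, \nabla_A]\nabla_{\bar B} + \nabla_A[\Delta, \nabla_{\bar B}]$; the subsidiary expansion $\nabla_A(R_{C\bar B}\hash\nabla^C) = (\nabla_A R_{C\bar B})\hash\nabla^C + R_{C\bar B}\hash\nabla_A\nabla^C$ produces the stated leading terms, and the last piece is absorbed into $O(r^{n-2})$ because $R_{C\bar B}$ is already $O(r^n)$ and \eqref{eqn:nablanabla} lets one reorder $\nabla_A\nabla^C$ up to $O(r^{n-1})$.

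The function statements \eqref{eqn:Deltanablafn} and \eqref{eqn:Deltanablabarnablafn} are refinements exploiting that curvature actions vanish on scalars: on a function $f$ the mixed commutator $[\nabla_{\bar D}, \nabla_A]f$ is identically zero, and the pure-type commutator $[\nabla_C, \nabla_A]$ acting on $\nabla_{\bar D}f$ vanishes (or vanishes to the relevant order) by the K\"ahler structure of the ambient metric, which together with the Bianchi estimate upgrades the remainder from $O(r^{n-1})$ to $O(r^n)$; \eqref{eqn:Deltanablabarnablafn} then follows by one further application of \eqref{eqn:Deltanabla} together with the fact that $[\Delta, \nabla_{\bar B}]f = O(r^n)$ contributes only $O(r^{n-1})$ once $\nabla_A$ is applied. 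The main obstacle I anticipate is not any single algebraic step but the systematic bookkeeping of vanishing orders through the iterated Leibniz manipulations: every covariant differentiation may lower the order of vanishing on $\mF$ by one, and one must confirm that each source of error --- curvature derivatives, the $O(r^{n-1})$ from \eqref{eqn:nablanabla}, and compositions with bounded tensor multipliers such as $Z_{\bar B}$ --- respects the claimed $r^{n-1}$ and $r^{n-2}$ bounds throughout the longer chains appearing in \eqref{eqn:Deltanablabarnabla}.
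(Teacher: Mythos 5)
Your overall route --- direct computation from $\Delta=g^{C\bar D}\nabla_C\nabla_{\bar D}$ using the transport equations \eqref{ids}, the K\"ahler commutators \eqref{eqn:nablanabla}--\eqref{eqn:nablanablabar}, the approximate Ricci-flatness, and the Leibniz rule for commutators --- is exactly the paper's, and your treatments of \eqref{eqn:DeltaZ}, \eqref{eqn:Deltanabla}, \eqref{eqn:DeltaZZbar} and \eqref{eqn:DeltaZnablabar} match the paper's proof step for step (including the identification $H_{A\bar B}=h_{A\bar B}=\nabla_AZ_{\bar B}$).

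There is, however, a genuine error in your derivation of \eqref{eqn:Deltanablabarnabla}. The term $\nabla_AR_{C\bar B}\hash\nabla^C$ in the statement is the \emph{operator composition} $\nabla_A\circ(R_{C\bar B}\hash)\circ\nabla^C$, not the tensor $(\nabla_AR_{C\bar B})\hash$ followed by $\nabla^C$; this reading is forced by how the identity is used later (in \eqref{eqn:gf_obs4} the $\nabla^C$ must act first so that \eqref{eqn:gf_obs3} can be substituted and $R_{C\bar B}\hash Z^C=0$ invoked). Your ``subsidiary expansion'' is therefore unnecessary --- the composition already contains both $(\nabla_AR_{C\bar B})\hash\nabla^C$ and $R_{C\bar B}\hash\nabla_A\nabla^C$ --- and, more seriously, your reason for discarding the second piece is false: $R_{C\bar B}\hash$ denotes the full $(1,1)$-part of the ambient curvature acting as an endomorphism, which does not vanish along $\cF$; only its Ricci trace is $O(r^n)$. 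Hence $R_{C\bar B}\hash\nabla_A\nabla^C$ is a second-order operator with an $O(1)$ coefficient and cannot be absorbed into $O(r^{n-2})$; as written your argument yields a formula missing this term. The correct (and shorter) argument stops at the Leibniz step $[\Delta,\nabla_A\nabla_{\bar B}]=[\Delta,\nabla_A]\nabla_{\bar B}+\nabla_A[\Delta,\nabla_{\bar B}]$, inserts both halves of \eqref{eqn:Deltanabla}, and observes that the only degradation of the error from $O(r^{n-1})$ to $O(r^{n-2})$ comes from $\nabla_A$ hitting the $O(r^{n-1})$ remainder. The same conflation of $R_{C\bar B}\hash$ with the Ricci tensor leaves your justification of \eqref{eqn:Deltanablafn} slightly off target for the barred operator: for $[\Delta,\nabla_{\bar B}]$ on functions the operative fact is not that curvature annihilates scalars but that $R_{C\bar B}\hash\nabla^Cf$ contracts the endomorphism index against the vector index, producing the Ricci tensor and hence $O(r^n)$; the annihilation-of-scalars argument is what handles $[\Delta,\nabla_A]f$.
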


\begin{proof}
Using~\eqref{ids} 
we compute that
\[ \left[\Delta,Z_A\right] = \left[\nabla_C,Z_A\right]\nabla^C + \nabla_C\left[\nabla^C,Z_A\right] = \nabla_A , \]
and the other result follows by conjugation.
Using~\eqref{eqn:nablanabla} and~\eqref{eqn:nablanablabar}, we compute that
\[ \left[\Delta,\nabla_A\right] = \left[\nabla_C,\nabla_A\right]\nabla^C + \nabla_C\left[\nabla^C,\nabla_A\right] = -\nabla_C R_A{}^C\hash = -R_A{}^C\hash\nabla_C + O(r^{n-1}) . \]
where the last equality uses $\Ric=O(r^n)$.  Moreover, since $R_A{}^C\hash$ annihilates functions, we see that $[\Delta,\nabla_A]$ also annihilates functions.  Similarly,
\[ \left[\Delta,\nabla_{\bar B}\right] = R_{C\bar B}\hash\nabla^C, \]
and hence on functions,
\[ \left[\Delta,\nabla_{\bar B}\right] = -R_{C\bar B}\nabla^C = O(r^n) . \]

Using~\eqref{ids} and~\eqref{eqn:DeltaZ}
 we compute that
\[ \left[\Delta,Z_AZ_{\bar B}\right] = \nabla_AZ_{\bar B} + Z_A\nabla_{\bar B} = Z_{\bar B}\nabla_A + Z_A\nabla_{\bar B} + H_{A\bar B} . \]
Using~\eqref{eqn:DeltaZ} and~\eqref{eqn:Deltanabla} immediately yields \eqref{eqn:DeltaZnablabar}.
Finally, using both parts of~\eqref{eqn:Deltanabla} (resp.\ of~\eqref{eqn:Deltanablafn} on functions) gives~\eqref{eqn:Deltanablabarnabla} (resp.\ \eqref{eqn:Deltanablabarnablafn} on functions).
\end{proof}

The main step in deriving a tractor formula for the CR GJMS operators is the following formula for the difference between $\Delta^kD_AD_{\bar B}$ and $Z_AZ_{\bar B}\Delta^{k+2}$ as operators on ambient tensors.  As in Lemma~\ref{lem:general_commutators}, we only identify this difference along the hypersurface $\cF$ and identify the order of the error.  To simplify the exposition, we do not keep track of the order of the error throughout the proof, but only record the final result; one counts the order of the error terms by using Lemma~\ref{lem:general_commutators}, counting additional derivatives, and using the identity~\eqref{eqn:Deltar_commutator}.

\begin{prop}
 \label{prop:general_formula}
 Modulo the addition of terms $O(r^{n-k-1})$ the following holds:
\begin{equation}
\label{eqn:general_formula}
 \begin{split}
  \Delta^kD_A D_{\bar B} & = Z_A Z_{\bar B}\Delta^{k+2} \\
& \quad - (n+E+\bar E+k+1)(Z_{\bar B}\nabla_A\Delta^{k+1} + Z_A\nabla_{\bar B}\Delta^{k+1} + H_{A\bar B}\Delta^{k+1}) \\
& \quad + (n+E+\bar E+k+1)(n+E+\bar E+k+2)\nabla_A \nabla_{\bar B}\Delta^k \\
& \quad - Z_{A}R_{C\bar{B}}\hash (\Delta \delta_E{}^C-R_E{}^C\hash)^kD^E \\
& \quad + \sum_{j=0}^{k-1}(j+1)\nabla_{A}\Delta^{k-1-j}R_{C\bar{B}}\hash (\Delta \delta_E{}^C - R_E{}^C\hash)^jD^E \\
& \quad - \sum_{j=0}^{k-1}(\Delta \delta_A{}^{C}+R_A{}^C\hash)^j R_C{}^E \hash \Delta^{k-1-j}D_E D_{\bar{B}} \\
& \quad + \sum_{j=0}^{k-1}(\Delta \delta_A{}^C+R_A{}^C\hash)^jD_CR_{E\bar{B}}\hash(\Delta \delta_F{}^E - R_F{}^E\hash)^{k-1-j}D^F .
\end{split}
\end{equation}
Here the left and right hand side are operators acting on arbitrary ambient tensors.  Moreover, when acting on ambient functions, the error term in~\eqref{eqn:general_formula} is $O(r^{n-k})$.
\end{prop}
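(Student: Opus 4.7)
My plan is to prove~\eqref{eqn:general_formula} by induction on $k\ge 0$, with the commutator identities of Lemma~\ref{lem:general_commutators} doing essentially all the work. For the base case $k=0$, I would directly expand
\[
D_A D_{\bar B} = \bigl((n+E+\bar E+1)\nabla_A - Z_A\Delta\bigr)\bigl((n+E+\bar E+1)\nabla_{\bar B} - Z_{\bar B}\Delta\bigr),
\]
using the weight identities~\eqref{wts} and~\eqref{eqn:DeltaE} (which let me move the polynomial $n+E+\bar E+1$ past $\nabla$ and $Z$ by fixed integer shifts) together with~\eqref{ids} (which gives $[\nabla_A,Z_{\bar B}]=H_{A\bar B}$ and the analogous identities). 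The product of the two $(W+1)\nabla$ factors collapses to $(n+E+\bar E+1)(n+E+\bar E+2)\nabla_A\nabla_{\bar B}$; the two $(W+1)\nabla\cdot Z\Delta$ cross terms produce the $Z\nabla\Delta$ and $H_{A\bar B}\Delta$ contributions weighted by $n+E+\bar E+1$; the $Z\Delta\cdot Z\Delta$ term yields $Z_AZ_{\bar B}\Delta^2+Z_A\nabla_{\bar B}\Delta$ after one use of~\eqref{eqn:DeltaZ}; and a single application of~\eqref{eqn:Deltanabla} to commute $\Delta$ past the surviving $\nabla_{\bar B}$ on the right of a $Z_A$ yields the curvature term $-Z_A R_{C\bar B}\hash D^C$, which is precisely the $k=0$ reading of $-Z_AR_{C\bar B}\hash(\Delta\delta_E{}^C-R_E{}^C\hash)^kD^E$, both $\sum_{j=0}^{k-1}$ sums being vacuous.

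For the inductive step, I would apply $\Delta$ to both sides of~\eqref{eqn:general_formula} at level $k$ and commute $\Delta$ through each right-hand summand using Lemma~\ref{lem:general_commutators}. Specifically,~\eqref{eqn:DeltaZZbar} sends $Z_AZ_{\bar B}\Delta^{k+2}$ to $Z_AZ_{\bar B}\Delta^{k+3}$ plus the new $(Z_A\nabla_{\bar B}+Z_{\bar B}\nabla_A+H_{A\bar B})\Delta^{k+2}$ piece;~\eqref{eqn:DeltaZnablabar} (and its conjugate) moves $\Delta$ through the $Z\nabla\Delta^{k+1}$ terms, emitting both a $\nabla_A\nabla_{\bar B}\Delta^{k+1}$ contribution and a $Z\,R\hash\nabla$ term that feeds into the existing first $j$-sum; and~\eqref{eqn:Deltanablabarnabla} converts $\nabla_A\nabla_{\bar B}\Delta^k$ into $\nabla_A\nabla_{\bar B}\Delta^{k+1}$ together with $-R_A{}^C\hash\nabla_C\nabla_{\bar B}\Delta^k$ and $\nabla_A R_{C\bar B}\hash\nabla^C\Delta^k$, the seeds of the second and first $j$-sums respectively at the new top index $j=k$. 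The recursive factors $(\Delta\delta_A{}^C+R_A{}^C\hash)^j$ and $(\Delta\delta_E{}^C-R_E{}^C\hash)^j$ in the existing summations are engineered precisely so that commuting $\Delta$ through each of them increments the exponent while cleanly producing the next piece of the $(k+1)$-th sum.

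For the error orders, each commutator from Lemma~\ref{lem:general_commutators} contributes $O(r^{n-1})$ in general and $O(r^{n})$ on functions, so the error grows by one order of $r$ per induction step; starting from $O(r^{n-1})$ on tensors (respectively $O(r^n)$ on functions) at $k=0$ yields $O(r^{n-k-1})$ and $O(r^{n-k})$ respectively, matching the claim. The main obstacle I anticipate is the bookkeeping: verifying that every curvature term spawned by the inductive commutation is absorbed cleanly into one of the two telescoping sums with no residual, and that the integer shifts of the weight polynomial come out exactly as $n+E+\bar E+k+1$ and $n+E+\bar E+k+2$ in the appropriate places. The structural reason this closes is the identity $(\Delta\delta_A{}^C-R_A{}^C\hash)\nabla_C = \nabla_A\Delta + O(r^{n-1})$, a rearrangement of~\eqref{eqn:Deltanabla}, which identifies $\Delta\delta - R\hash$ as the correctly curvature-twisted form of $\Delta$ that interacts with a free tractor index; combined with $Z^AR_{A\bar B}{}^C{}_D=0=Z^{\bar B}R_{A\bar B}{}^C{}_D$, this organisation is what forces all corrections into the stated sums.
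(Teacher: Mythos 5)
Your proposal follows essentially the same route as the paper: the base case $k=0$ by direct expansion of $D_AD_{\bar B}$ using the identities of Lemma~\ref{lem:general_commutators}, the inductive step by commuting $\Delta$ through each summand (with the key telescoping identity $(n+E+\bar E+k+1)\nabla^B\Delta^k = Z^B\Delta^{k+1} + (\Delta\delta_C{}^B - R_C{}^B\hash)^kD^C$ playing exactly the role it does in the paper's observation~\eqref{eqn:gf_obs3}), and the same accounting that each induction step costs one power of $r$ in the error. The approach and the level of detail on the remaining bookkeeping match the paper's own proof.
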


\begin{proof}

The proof is by induction.  To begin, we compute that
\begin{align*}
D_A D_{\bar B} & = \left((n+E+\bar E+1)\nabla_A - Z_A\Delta\right)\left((n+E+\bar E+1)\nabla_{\bar B} - Z_{\bar B}\Delta\right)\\
& = Z_{\bar B}Z_A\Delta^2 - (n+E+\bar E+1)(Z_{\bar B}\nabla_A\Delta + Z_A\nabla_{\bar B}\Delta + H_{A\bar B}\Delta) \\
& \quad + (n+E+\bar E+1)(n+E+\bar E+2)\nabla_A\nabla_{\bar B}\\
& \quad - (n+E+\bar E+2)Z_{A}R_{C\bar{B}}\hash\nabla^C
\end{align*}
where the second equality uses Lemma~\ref{lem:general_commutators}; for the purposes of determining the order of vanishing of the error, note that the only commutator involving errors which was evaluated in the above derivation is $[\Delta,\nabla_{\bar B}]$.
Using the
definition of (the ambient) $D^A$ and the fact $R_{C\bar{B}}\hash
Z^C=0$, we see that
\[ (n+E+\bar E+2)Z_{A}R_{C\bar{B}}\hash\nabla^C = Z_{\bar B}R_{C\bar{B}}\hash D^C . \]
Together, these two displays yield the case $k=0$.

Suppose now that~\eqref{eqn:general_formula} holds.  We want to compute $\Delta^{k+1}D_AD_{\bar B}$.  To that end, we make a number of observations.  Note that, by Lemma~\ref{lem:general_commutators}, the formulae derived below all hold up to terms of order $O(r^{n-2})$, and up to terms of order $O(r^{n-1})$ on functions.  Thus the lowest order error comes from commuting the Laplacian through the error term in $\Delta^kD_AD_{\bar B}$, accounting for the loss of a power of $r$ in the order of the error.

First, as an immediate consequence of~\eqref{eqn:DeltaZZbar}, for any $k$ it holds that
\begin{equation}
\label{eqn:gf_obs1}
\Delta Z_AZ_{\bar B}\Delta^{k+2} = Z_AZ_{\bar B}\Delta^{k+3} + \left(Z_{\bar B}\nabla_A + Z_A\nabla_{\bar B} + H_{A\bar B}\right)\Delta^{k+2} .
\end{equation}

Second, as an immediate consequence of~\eqref{eqn:nablanablabar}
and~\eqref{eqn:DeltaZnablabar}, for any $k$ it holds that
\begin{equation}
\label{eqn:gf_obs2}
\begin{split}
\Delta&(n+E+\bar E+k+1)(Z_{\bar B}\nabla_A + Z_A\nabla_{\bar B} + H_{A\bar B})\Delta^{k+1} \\
& = (n+E+\bar E+k+3)\bigl((Z_{\bar B}\nabla_A + Z_A\nabla_{\bar B} + H_{A\bar B})\Delta^{k+2} \\
& \qquad + (2\nabla_A\nabla_{\bar B} - R_{A\bar B}\hash + Z_A\nabla^CR_{C\bar B}\hash - Z_{\bar B}\nabla_CR_A{}^C\hash)\Delta^{k+1}\bigr) .
\end{split}
\end{equation}
Third, it holds for any $k$ that
\begin{equation}
\label{eqn:gf_obs3}
(n+E+\bar E+k+1)\nabla^B\Delta^k = Z^B\Delta^{k+1} + \left(\Delta \delta_C{}^B - R_C{}^B\hash\right)^kD^C .
\end{equation}
Indeed, \eqref{eqn:gf_obs3} trivially holds when $k=0$. (As done implicitly in  \eqref{eqn:general_formula}, we interpret $\left(\Delta \delta_C{}^B - R_C{}^B\hash\right)^k$ at $k=0$ to mean the identity endomorphism field, while for $k\geq 2$ there is an obvious abuse of the abstact index notation.)
Using first \eqref{eqn:Deltanabla} and  then  proceeding by induction, one  finds that
\begin{align*}
(n+ & E+\bar E+k+1)\nabla^B\Delta^k \\
& = \left(\Delta \delta_C{}^B - R_C{}^B\hash\right)\left(n+E+\bar E+k-1\right)\nabla^C\Delta^{k-1} \\
& = \left(\Delta \delta_C{}^B - R_C{}^B\hash\right)\left(Z^C\Delta^k - \nabla^C\Delta^{k-1} + \left(\Delta \delta_E{}^C - R_E{}^C\hash\right)^{k-1}D^E\right) \\
& = \Delta Z^B\Delta^k - \Delta\nabla^B\Delta^{k-1} + R_{C}{}^B\hash\nabla^C\Delta^{k-1} + \left(\Delta \delta_C{}^B - R_C{}^B\hash\right)^kD^C \\
& = Z^B\Delta^{k+1} + \left(\Delta \delta_C{}^B - R_C{}^B\hash\right)^k D^C .
\end{align*}

Fourth, as an immediate consequence of~\eqref{eqn:DeltaE},
\eqref{eqn:Deltanablabarnabla} and~\eqref{eqn:gf_obs3}, for any $k$ it
holds that
\begin{equation}
\label{eqn:gf_obs4}
\begin{split}
\Delta&(n+E+\bar E+k+1)(n+E+\bar E+k+2)\nabla_A\nabla_{\bar B}\Delta^k \\
& = (n+E+\bar E+k+3)(n+E+\bar E+k+4)\nabla_A\nabla_{\bar B}\Delta^{k+1} \\
& \quad + (n+E+\bar E+k+3)\nabla_{A}R_{C\bar{B}}\hash (\Delta \delta_E{}^C - R_E{}^C\hash)^k D^E \\
& \quad - R_A{}^C\hash(n+E+\bar E+k+1)(n+E+\bar E+k+2)\nabla_C\nabla_{\bar{B}}\Delta^k .
\end{split}
\end{equation}
Note that the last summand is easily rewritten in terms of $\Delta^kD_CD_{\bar{B}}$ using the inductive hypothesis~\eqref{eqn:general_formula}.

Fifth, as an immediate consequence of~\eqref{eqn:DeltaZ} and the definition of the tractor $D$ operator, it holds that
\begin{equation}
\label{eqn:gf_obs5}
\Delta Z_{A} R_{C\bar{B}}\hash = -D_{A}R_{C\bar{B}}\hash
+ (n+E+\bar E+2)\nabla_AR_{C\bar{B}}\hash .
\end{equation}

Sixth, as an immediate consequence of~\eqref{eqn:DeltaE}, for any $k$ it holds that
\begin{equation}
\label{eqn:gf_obs6}
\begin{split}
R_A{}^C\hash&(n+E+\bar E+k+1)\left(Z_C\nabla_{\bar{B}} + Z_{\bar{B}}\nabla_C + H_{C\bar{B}}\right)\Delta^{k+1} \\
& = \left(n+E+\bar E+k+3\right)\left(Z_{\bar{B}}R_A{}^C\hash\nabla_C + R_{A\bar B}\hash\right)\Delta^{k+1} .
\end{split}
\end{equation}

Seventh, as an immediate consequence of~\eqref{eqn:gf_obs3}, for any $k$ it holds that
\begin{equation}
\label{eqn:gf_obs7}
\begin{split}
\left(n+E+ \bar E+k+3\right) & Z_A\nabla^C R_{C\bar{B}}\hash\Delta^{k+1}\\
& = Z_AR_{C\bar{B}}\hash\left(\Delta \delta_E{}^C - R_E{}^C\hash\right)^{k+1}D^E
\end{split}
\end{equation}

Eighth, as an immediate consequence of~\eqref{eqn:Deltanabla}, it holds that
\begin{equation}
\label{eqn:gf_obs8}
\Delta\nabla_AR_{C\bar{B}}\hash = \nabla_A\Delta R_{C\bar{B}}\hash - R_A{}^E\hash\nabla_ER_{C\bar{B}}\hash .
\end{equation}

It is then straightforward to use the above eight observations and the
inductive hypothesis~\eqref{eqn:general_formula} to show that
$\Delta^{k+1}D_AD_{\bar B}$ is given as
in~\eqref{eqn:general_formula}.
\end{proof}

The reason for the improved order of vanishing of the error in~\eqref{eqn:general_formula} when considered as an operator on functions is that $R_{AC}\hash$ and $R_{A\bar B}\hash$ annihilate functions.  This observation also yields a simplification of~\eqref{eqn:general_formula} on functions.

\begin{cor}
 \label{cor:general_formula}
 Modulo the addition of terms $O(r^{n-k})$, the following holds on functions:
 \begin{align*}
  \Delta^kD_A D_{\bar B} & = Z_AZ_{\bar B}\Delta^{k+2}\\
& - (n+E+\bar E+k+1)(Z_{\bar B}\nabla_A\Delta^{k+1} + Z_A\nabla_{\bar B}\Delta^{k+1} + H_{A\bar B}\Delta^{k+1}) \\
& \quad + (n+E+\bar E+k+1)(n+E+\bar E+k+2)\nabla_A \nabla_{\bar B}\Delta^k \\
& \quad - \sum_{j=0}^{k-1}(\Delta \delta_A{}^{C}+R_A{}^C\hash)^j R_C{}^E \hash \Delta^{k-1-j}D_E D_{\bar{B}} .
 \end{align*}
 In particular, when acting on $\mE\left(-\frac{n-1-k}{2},-\frac{n-1-k}{2}\right)$,
 \begin{equation}
  \label{eqn:general_formula_homogeneous_functions}
  \Delta^kD_A D_{\bar B} = Z_AZ_{\bar B}\Delta^{k+2} -\sum_{j=0}^{k-1}(\Delta \delta_A{}^{C}+R_A{}^C\hash)^j R_C{}^E \hash \Delta^{k-1-j}D_E D_{\bar{B}},
 \end{equation}
modulo the addition of terms $O(r^{n-k})$.
\end{cor}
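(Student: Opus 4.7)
The plan is to deduce both identities from Proposition~\ref{prop:general_formula} by showing that, when the operator acts on ambient functions, each of the three ``extra'' terms in~\eqref{eqn:general_formula} not appearing in the corollary is $O(r^{n-k})$ and hence can be absorbed into the error; the homogeneous identity~\eqref{eqn:general_formula_homogeneous_functions} then follows by weight considerations that kill the remaining middle two lines.

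The key structural observation is that in each of the three problematic terms, the curvature operator $R_{C\bar B}\hash$ (or $R_{E\bar B}\hash$) is applied in such a way that the parameter index $C$ (respectively $E$) contracts with the output index of the preceding vector expression. Such a contraction reduces the full $(1,1)$-curvature to a Ricci trace,
\[ R_{C\bar B}\hash v^C = -R_{C\bar B}{}^C{}_F v^F = -R_{\bar B F} v^F, \]
which by the asymptotic Ricci-flatness~\eqref{eqn:ambient_metric_ricci} is $O(r^n)$. Therefore each of the three extra terms carries a factor of $O(r^n)$ before any further derivatives act on it. The first extra term, $Z_A R_{C\bar B}\hash(\Delta\delta_E{}^C-R_E{}^C\hash)^kD^E$, has no subsequent derivatives, so it is $O(r^n)\subseteq O(r^{n-k})$. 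For the second, $(j+1)\nabla_A\Delta^{k-1-j}R_{C\bar B}\hash(\cdots)^jD^E$, at most $1+(k-1-j)=k-j$ derivatives act on the Ricci factor; since $\lvert\nabla r\rvert^2 = Z^AZ_A = r$, each derivative reduces its order of vanishing by at most one, so the result is $O(r^{n-(k-j)})\subseteq O(r^{n-k})$. The third extra term is treated analogously, with at most $j+1\le k$ derivatives hitting its Ricci factor. Combining these estimates with the error bound in Proposition~\ref{prop:general_formula} yields the first identity.

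For~\eqref{eqn:general_formula_homogeneous_functions}, I would use the commutator identities $[E,\nabla_A]=-\nabla_A$, $[E,\nabla_{\bar B}]=0$, $[E,\Delta]=-\Delta$ from~\eqref{eqn:DeltaE}, together with their conjugates and the weight shifts induced by the tractor factors $Z_A$, $Z_{\bar B}$ and $H_{A\bar B}$, to verify that on an ambient function $\tilde f$ of bidegree $\bigl(-\tfrac{n-1-k}{2},-\tfrac{n-1-k}{2}\bigr)$ each of $Z_{\bar B}\nabla_A\Delta^{k+1}\tilde f$, $Z_A\nabla_{\bar B}\Delta^{k+1}\tilde f$, $H_{A\bar B}\Delta^{k+1}\tilde f$, and $\nabla_A\nabla_{\bar B}\Delta^k\tilde f$ is an eigenfunction of $E+\bar E$ with eigenvalue $-(n+k+1)$. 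The scalar operator $n+E+\bar E+k+1$ thus annihilates all four expressions, eliminating lines two and three of the first identity and producing~\eqref{eqn:general_formula_homogeneous_functions}.

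The main obstacle is the order-of-vanishing bookkeeping: showing that $j$ iterations of $\Delta$ applied to an $O(r^n)$ tensor yield only $O(r^{n-j})$ rather than the pessimistic $O(r^{n-2j})$ one might fear from a second-order operator. This improvement relies on the defining property $\lvert\nabla r\rvert^2=r$ of the ambient metric, together with the bounded commutator $[\Delta,r]=n+E+\bar E+2$ from~\eqref{eqn:Deltar_commutator}, both of which prevent $\Delta$ from absorbing extra powers of $r$ when acting on the Ricci factor.
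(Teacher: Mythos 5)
Your proof is correct and follows essentially the same route as the paper: the dropped terms are killed because the outermost $R_{C\bar B}\hash$ (or $R_{E\bar B}\hash$) applied to a vector built from $D^E\cf$ reduces to a Ricci contraction, hence is $O(r^n)$ by the approximate Ricci-flatness of the ambient metric, and the homogeneous case follows from the eigenvalue $-(n+k+1)$ of $E+\bar E$ on the remaining middle terms. The paper's own proof records only the Ricci-contraction observation and leaves the derivative bookkeeping (including the $\lvert\nabla r\rvert^2=r$ improvement you correctly identify) implicit, so your write-up is simply a more detailed version of the same argument.
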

\begin{proof}
 Let $\cf\in\cmE(w,w^\prime)$.  Then $R_E{}^C\hash D^E\cf =
 R_E{}^C{}_F{}^E D^F\cf = O(r^n)$.  Using this observation in
 Proposition~\ref{prop:general_formula} yields the final
 result.
\end{proof}

The next step in deriving a tractor formula for the CR GJMS operators is to derive a tractor formula for the summation on the right-hand side of~\eqref{eqn:general_formula_homogeneous_functions}.  To that end, we first observe that the operator acting on $D_AD_{\bar B}$ in the right-most summand acts tangentially on $\mE^\star\left(-\frac{n+1-k}{2},-\frac{n+1-k}{2}\right)$, where we recall from Subsection~\ref{Dop} that $\mE^\star(w,w^\prime)$ denotes a weighted tractor bundle.

\begin{prop}
 \label{prop:tangential}
 As an operator on $\mE^\star\left(-\frac{n+1-k}{2},-\frac{n+1-k}{2}\right)$,
 \begin{equation}
  \label{eqn:algorithm_lot}
\sum_{j=0}^{k-1}(\Delta \delta_A{}^{C}+R_A{}^C\hash)^j R_C{}^E \hash \Delta^{k-1-j}
 \end{equation}
 acts tangentially.
\end{prop}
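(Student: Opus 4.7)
The plan is to observe that the operator in \eqref{eqn:algorithm_lot} has a very compact description.  Setting $M_A{}^C := \Delta\,\delta_A{}^C + R_A{}^C\hash$, the identity
\[
(S_k)_A{}^E := \sum_{j=0}^{k-1}(M^j)_A{}^C R_C{}^E\hash\,\Delta^{k-1-j} = (M^k)_A{}^E - \delta_A{}^E\,\Delta^k
\]
follows by iterating the recursion $(M^j)_A{}^E = (M^{j-1})_A{}^E\,\Delta + (M^{j-1})_A{}^C R_C{}^E\hash$ from $j = k$ down to $j = 1$, so tangentiality of the original sum is equivalent to tangentiality of the much simpler expression $M^k - \delta\,\Delta^k$.

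Writing $N := n + \bw + \bw^\prime$ for the combined weight operator, the identity \eqref{eqn:Deltar_commutator} reads $[\Delta, r] = N + 2$.  Since $R_A{}^C\hash$ is algebraic one has $[R_A{}^C\hash, r] = 0$, and hence $[M, r] = N + 2$ as well.  A direct bidegree count in the Fefferman K\"ahler metric shows that the components of the ambient Riemann tensor have bidegree $(-1, -1)$ on $\MA$, so that $R_A{}^C\hash$ shifts ambient bidegrees by $(1, 1)$ exactly as $\Delta$ does; consequently $[N, M] = -2M$.  The ensuing operator identity $M^i(N + 2) = (N + 2 + 2i)M^i$ allows me to telescope
\[
[M^k, r] = \sum_{i=0}^{k-1}M^i[M, r]M^{k-1-i} = k(N + k + 1)\,M^{k-1},
\]
and the identical calculation with $\Delta$ in place of $M$ yields $[\Delta^k, r] = k(N + k + 1)\Delta^{k-1}$.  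Subtracting gives the key identity $[S_k, r] = k(N + k + 1)\,S_{k-1}$.

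To conclude, I would apply this formula to an arbitrary $f\in\mE^\star(w - 1, w - 1)$ where $w = -(n + 1 - k)/2$.  Since $S_{k-1}$ lowers bidegree by $(k - 1, k - 1)$, the scalar $N + k + 1$ evaluated on $S_{k-1}f$ equals $n + 2(w - k) + k + 1 = n + 2w - k + 1$, and this vanishes precisely because $n + 2w = k - 1$ on the critical weight.  Therefore $[S_k, r]f = 0$ on $\mE^\star(w - 1, w - 1)$, which by the tangentiality characterisation recalled in Section~\ref{ambsection} is exactly the statement that $S_k$ acts tangentially on $\mE^\star(w, w)$.

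The principal technical point is the weight identity $[N, R_A{}^C\hash] = -2R_A{}^C\hash$: once one verifies that the curvature action shifts ambient bidegrees in the same way as the Laplacian, the remainder is a clean commutator telescoping.  Observe moreover that the numerological cancellation $n + 2w - k + 1 = 0$ at $w = -(n + 1 - k)/2$ is exactly what singles out the critical CR GJMS weight; on any other weight the operator $S_k$ would acquire a non-trivial coefficient obstructing tangentiality.
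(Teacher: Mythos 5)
Your proof is correct and follows essentially the same route as the paper's: the paper's (very terse) proof likewise reduces tangentiality to showing that the commutator of \eqref{eqn:algorithm_lot} with $r$ is $O(r)$ on elements of weight $\left(-\frac{n+3-k}{2},-\frac{n+3-k}{2}\right)$, using only the identity $[\Delta,r]=n+E+\bar E+2$. The telescoping identity $\sum_{j=0}^{k-1}(\Delta\delta_A{}^C+R_A{}^C\hash)^jR_C{}^E\hash\,\Delta^{k-1-j}=(M^k)_A{}^E-\delta_A{}^E\Delta^k$ with $M_A{}^C=\Delta\delta_A{}^C+R_A{}^C\hash$ is exactly the structure the paper leaves implicit (note that the individual summands are \emph{not} separately tangential, so this reorganisation is genuinely needed), and your weight bookkeeping --- in particular $[n+E+\bar E,\,R_A{}^C\hash]=-2R_A{}^C\hash$, which follows from $R_{A\bar B}\hash=[\nabla_A,\nabla_{\bar B}]$ and \eqref{eqn:DeltaE} --- checks out.
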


\begin{proof}
 We must show that the commutator of~\eqref{eqn:algorithm_lot} with
 $r$ is of the form $r\cdot\mathrm{Op}$, for some operator
 $\mathrm{Op}$, when applied to an element of
 $\mE^\star\left(-\frac{n+3-k}{2},-\frac{n+3-k}{2}\right)$.  This
 follows directly from the formula~\eqref{eqn:Deltar_commutator}.
\end{proof}

The key point of Proposition~\ref{prop:tangential}
is that it shows that~\eqref{eqn:algorithm_lot} is a tangential differential operator of order $k-1$.
Since we assume that $k\leq n+1$, the operators~\eqref{eqn:algorithm_lot}
are all subcritical.  In particular, we can adapt the arguments
from~\cite{Gover2006,GoverPeterson2003,GoverPeterson2005} to easily
produce tractor formulae for the
operators~\eqref{eqn:algorithm_lot}. (Here and below we identify
tractor operators with the corresponding tangential ambient
operators.)

\begin{prop}
 \label{prop:tangential_tractor}
 Suppose that $k\leq n-1$.  There is a tractor operator
 \[\textstyle \Phi_{A\bar E}\colon\mE^\star\left(-\frac{n+1-k}{2},-\frac{n+1-k}{2}\right)\to\mE_{A\bar E}\otimes\mE^\star\left(-\frac{n+1+k}{2},-\frac{n+1+k}{2}\right) \]
 such that
 \[ \Phi_A{}^E =\sum_{j=0}^{k-1}(\Delta \delta_A{}^{C}+R_A{}^C\hash)^j R_C{}^E \hash \Delta^{k-1-j}. \]
 Moreover, if $I^A$ is parallel, it holds that $I^{A}\Phi_A{}^E=0$.
\end{prop}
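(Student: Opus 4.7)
The plan is to adapt the Gover--Peterson procedure \cite{GoverPeterson2003,GoverPeterson2005,Gover2006} for constructing tractor formulae from tangential, subcritical ambient operators. The inputs are the tangentiality already established in Proposition~\ref{prop:tangential} together with the ambient-to-tractor dictionary of Section~\ref{ambsection}, under which $R_{A\bar B}{}^C{}_D$ corresponds to the CR Weyl tractor and $\Delta$ is related to the tractor $D$-operator via the identity $D_A = (n+E+\bar E+1)\nabla_A - Z_A\Delta$.

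For the construction of $\Phi_{A\bar E}$, I would proceed by induction on $k$. The base case $k=1$ is immediate: $\Phi_A{}^E = R_A{}^E\hash$ is the action of the CR Weyl tractor, already a tractor operator. For the inductive step, each summand of the sum defining $\Phi_A{}^E$ has exactly one central factor $R_C{}^E\hash$ flanked by powers of $\Delta$ and $(\Delta\delta_A{}^C + R_A{}^C\hash)$. Every occurrence of $\Delta$ acting on an intermediate weighted tensor of weight $(w,w')$ is rewritten via $Z_A\Delta = (n+w+w'+1)\nabla_A - D_A$ as an expression in tractor $D$-operators plus a tangential correction. Since $k\leq n-1$, the intermediate weights appearing are subcritical, so $(n+w+w'+1)\neq 0$ at every step and the substitution is well-defined; tangentiality of the full operator (Proposition~\ref{prop:tangential}) ensures the resulting tractor expression is independent of the extension of the input off $\cF$. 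This bookkeeping is the main obstacle in executing the plan, but it is routine given the framework of \cite{GoverPeterson2003,Gover2006}.

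For the parallel-tractor identity $I^A\Phi_A{}^E = 0$, the crucial observation is that a parallel standard tractor annihilates the ambient curvature when contracted into its first holomorphic index. Viewing $I^A$ as a parallel ambient vector field, the identity $0 = [\nabla_A,\nabla_{\bar B}]I^C = -R_{A\bar B}{}^C{}_D I^D$ gives $R_{A\bar B}{}^C{}_D I^D = 0$; combined with the K\"ahler symmetry $R_{A\bar B C\bar E} = R_{C\bar B A\bar E}$, this yields $I^A R_{A\bar B}{}^C{}_D = R_{D\bar B}{}^C{}_A I^A = 0$ after relabeling, and in particular $I^A R_A{}^C\hash = 0$ on any tractor. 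Since $I^A$ commutes with $\Delta$ by parallelism, a straightforward induction then gives $I^A(\Delta\delta_A{}^C + R_A{}^C\hash)^j = \Delta^j I^C$, and the remaining contraction $I^C R_C{}^E\hash$ vanishes for the same reason. Therefore every summand in $I^A\Phi_A{}^E$ is zero.
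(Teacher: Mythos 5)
Your argument for the vanishing $I^A\Phi_A{}^E=0$ is correct and is essentially the paper's own: the paper phrases it as the observation that, after eliminating Kronecker deltas, the free index $A$ always lands on a curvature factor, so that the contraction of a parallel $I^A$ into $R_{A\bar BC\bar E}$ kills every term; your explicit induction $I^A(\Delta\delta_A{}^C+R_A{}^C\hash)^j=\Delta^jI^C$ is a clean way of saying the same thing. The gap is in the construction of $\Phi_{A\bar E}$ itself, which is the substance of the proposition. The individual summands $(\Delta\delta_A{}^C+R_A{}^C\hash)^jR_C{}^E\hash\Delta^{k-1-j}$ are \emph{not} separately tangential --- only the full sum is, by Proposition~\ref{prop:tangential} --- so a term-by-term conversion is not available; and the identity $Z_A\Delta=(n+E+\bar E+1)\nabla_A-D_A$ only controls the combination $Z_A\Delta$. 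Since $Z^AZ_A=r$ vanishes along $\cF$, one cannot divide by $Z_A$ to solve for a bare $\Delta$, so ``rewriting every occurrence of $\Delta$'' via this identity is not a well-defined operation. The bookkeeping you defer as routine is precisely where the missing idea lives.

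The paper's mechanism is different and avoids ever expressing a bare Laplacian of the argument tractorially. Given $T$, one chooses the \emph{harmonic} ambient extension $\tilde T$, so that $\Delta^m\tilde T=0$ for $1\le m\le k-1$; this is possible exactly because $k\le n-1$ keeps the relevant weights subcritical. The sum then collapses to the single term $(\Delta\delta_A{}^C+R_A{}^C\hash)^{k-1}R_C{}^E\hash\,\tilde T$. Distributing the remaining Laplacians by the Leibniz rule and the commutators~\eqref{eqn:Deltanabla} puts every $\Delta$ onto a curvature factor, yielding a polynomial in $\nabla^j\tilde T$ and $\nabla^r\Delta^sR_{A\bar BC\bar E}$ with $j,r,s\le k-1$; the identity $\Delta R_{A\bar BE\bar F}=-R_{C\bar B}\hash R_A{}^C{}_{E\bar F}$ disposes of the Laplacians of curvature algebraically, and since $D_A$ reduces to $(n+E+\bar E+1)\nabla_A$ on harmonic homogeneous tensors, the whole expression becomes a polynomial in $D^jT$ and $D^rR_{A\bar BC\bar E}$, which is manifestly a tractor formula independent of the choice of extension by tangentiality. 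Your induction on $k$ could in principle be organised around the same device, but without the harmonic extension (or an equivalent way of handling $\Delta^{k-1-j}\tilde T$) the inductive step does not close.
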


\begin{proof}
 Let $T\in\mE^\star\left(-\frac{n+1-k}{2},-\frac{n+1-k}{2}\right)$.
 Since $k\leq n-1$, we may extend $T$ harmonically so that
 $\Delta^jT=0$ for $j\in\{0,\dotsc,k-1\}$.  In particular,
 \begin{equation}
  \label{eqn:tt1}
\sum_{j=0}^{k-1}(\Delta \delta_A{}^{C}+R_A{}^C\hash)^j R_C{}^E \hash \Delta^{k-1-j} T
=(\Delta \delta_A{}^{C}+R_A{}^C\hash)^{k-1} R_C{}^E \hash T
 \end{equation}
 Moreover, since $k\leq n-1$, the ambient tensors $\Delta^j R_{A\bar
   BC\bar E}$ are well-defined for all $j\in\{0,\dotsc,k-1\}$.  Using the
 commutator identity~\eqref{eqn:Deltanabla}, we may thus distribute
 the Laplacians in~\eqref{eqn:tt1} to obtain an equivalent expression
 which is polynomial in $\nabla^jT$ and $\nabla^r\Delta^sR_{A\bar BC\bar E}$ for
 $j,r,s\leq k-1$.  Using the identity $\Delta R_{A\bar BE\bar
   F}=-R_{C\bar B}\hash R_A{}^C{}_{E\bar F}$, we may then rewrite this as a
 polynomial in $D^jT$ and $D^rR_{A\bar BC\bar E}$, which is manifestly a tractor
 formula.  Finally, from direct inspection of~\eqref{eqn:tt1}, we see
 that in every term, after expanding and eliminating the Kronecker deltas,  the free indices
are always on  curvature terms $R_{A\bar BC\bar E}$.  Thus
 $I^{A}\Phi_{A\bar E}$ involves contracting $I$ into the
 curvature tensor.  Since $I^A$ is parallel, such contractions must
 vanish.
\end{proof}

By repeating this argument and arguing inductively from
Proposition~\ref{prop:general_formula}, we obtain the following
general tractor formula for the CR GJMS operators.

\begin{thm}
 \label{thm:general_formula}
 Let $k\leq n-1$.  There is a tractor operator
 \[\textstyle  \Psi_{C_{k+2}\dotso  C_2\bar C_1}{}^{ E_2\bar E_1} \colon
\mE_{E_2 \bar E_1}\left(\frac{k+1-n}{2},\frac{k+1-n}{2}\right) \to \mE_{ C_{k+2}\dotso  C_2\bar C_1}\left(-\frac{k+1+n}{2},-\frac{k+1+n}{2}\right) \]
 such that
 \[
\begin{split} (-1)^kZ_{ C_{k+2}}\dotso & Z_{C_4}Z_{\bar C_3}Z_{ C_2}Z_{\bar C_1}\Delta^{k+2}\\
& = D_{ C_{k+2}}\dotso D_{ C_4}D_{\bar C_3}D_{ C_2}D_{\bar C_1} + \Psi_{ C_{k+2}\dotso  C_4\bar C_3 C_2 \bar C_1}{}^{E_2\bar E_1}D_{E_2}D_{\bar E_1}
\end{split}\]
 and the full contraction $I^{ C_{k+2}}\dotsm I^{ C_2}I^{\bar C_1}\Psi_{C_{k+2}\dotso C_2\bar C_1}{}^{E_2\bar E_1}$ vanishes for any parallel tractor $I_A$.
\end{thm}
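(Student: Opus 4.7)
The plan is to establish Theorem~\ref{thm:general_formula} by induction on $k$, using Proposition~\ref{prop:general_formula} as the principal identity and the ambient rewrite $D_A = (n+E+\bar E+1)\nabla_A - Z_A\Delta$ to convert factors of $Z\Delta$ into tractor $D$-operators. The base case $k=0$ follows from Proposition~\ref{prop:general_formula} at $k=0$: rearranging yields $Z_{C_2}Z_{\bar C_1}\Delta^2 = D_{C_2}D_{\bar C_1} + (\text{explicit tractor-operator terms})$, where the correction terms (involving $Z\nabla\Delta$, $\nabla\nabla$, and curvature contracted with $D^C$) collect to form $\Psi^{(0)}{}_{C_2\bar C_1}{}^{E_2\bar E_1}D_{E_2}D_{\bar E_1}$; Proposition~\ref{prop:tangential_tractor} provides the needed tractorial reformulation of the curvature summands.

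For the inductive step, I assume the theorem for $k-1$ (or $k-2$, depending on parity of the slot pattern). First apply Proposition~\ref{prop:general_formula} to the innermost pair $(C_2,\bar C_1)$, rewriting $Z_{C_2}Z_{\bar C_1}\Delta^{k+2}$ as $\Delta^kD_{C_2}D_{\bar C_1}$ plus finitely many correction terms of the forms $(n+E+\bar E+k+1)Z_A\nabla_B\Delta^{k+1}$, $(n+E+\bar E+k+1)(n+E+\bar E+k+2)\nabla_A\nabla_{\bar B}\Delta^k$, curvature-polynomial expressions, and $D_ED_{\bar C_1}$-factor terms. Multiplying both sides on the left by $Z_{C_{k+2}}\dotsm Z_{\bar C_3}$, I iteratively rewrite each trailing factor $Z_{C_j}\Delta$ via the ambient identity as $-D_{C_j} + (n+E+\bar E+1)\nabla_{C_j}$, which after $k$ iterations combines with the leading $\Delta^k D_{C_2}D_{\bar C_1}$ to yield $(-1)^kD_{C_{k+2}}\dotsm D_{\bar C_1}$. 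The residual $\nabla_{C_j}$-corrections, together with all remaining correction terms from Proposition~\ref{prop:general_formula}, are reorganized using commutator identities such as $[\Delta,\nabla_A] = -R_A{}^C\sharp\nabla_C + O(r^{n-1})$ and absorbed (via the inductive hypothesis applied to the tractor-valued residuals) into a tractor operator of the desired form $\Psi^{(k)}D_{E_2}D_{\bar E_1}$.

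The parallel-tractor contraction property $I^{C_{k+2}}\dotsm I^{\bar C_1}\Psi^{(k)}=0$ follows by the same argument as in Proposition~\ref{prop:tangential_tractor}: every correction term in $\Psi^{(k)}$ contains at least one contraction of the tractor curvature $R_{A\bar BE\bar F}$ (or an iterated $D$-derivative thereof) with one of the free outer indices, and $\nabla I = 0$ for a parallel tractor $I$ forces $R_{A\bar B}\sharp I = 0$ via the Ricci identity $[\nabla_A,\nabla_{\bar B}]I = R_{A\bar B}\sharp I$. The main technical obstacle will be the careful bookkeeping in the inductive step: the weight-dependent scalar coefficients $(n+E+\bar E+1)$ vary as we descend the chain of $D$-operators, the commutator corrections cascade through the iterated rewrites, and one must verify that the error orders $O(r^{n-k-1})$ from Proposition~\ref{prop:general_formula} remain sufficient for all the resulting operators to act tangentially along $\mF$ and thereby define bona fide tractor operators on $M$.
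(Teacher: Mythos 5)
Your opening move (apply Corollary~\ref{cor:general_formula} to the innermost pair, so that on the weight $\frac{k+1-n}{2}$ the Euler factors $(n+E+\bar E+k+1)$ kill the non-curvature corrections and only the curvature sum factoring through $D_ED_{\bar C_1}$ survives) agrees with the paper. The gap is in your inductive step. You propose to absorb the remaining factors one at a time via $Z_{C_j}\Delta = -D_{C_j}+(n+E+\bar E+1)\nabla_{C_j}$. But the residual term this produces does \emph{not} vanish: acting on $\Delta^{k-1}D_{C_2}D_{\bar C_1}f$ with $f$ of weight $\bigl(\frac{k+1-n}{2},\frac{k+1-n}{2}\bigr)$, the Euler factor $(n+E+\bar E+1)$ evaluates to $1-k$, so you are left with a genuine term $(1-k)\,Z_{C_{k+2}}\dotsm Z_{C_4}\nabla_{\bar C_3}\Delta^{k-1}D_{C_2}D_{\bar C_1}f$ whose free index $\bar C_3$ sits on a bare covariant derivative rather than on a curvature tensor. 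Contracting such a term with a parallel tractor $I^{\bar C_3}$ gives $\nabla_{\bar C_3}(I^{\bar C_3}\cdots)$, which is not zero, so your stated justification for the annihilation property (``every correction term contains a contraction of the curvature with a free outer index'') fails for exactly these residuals. They must ultimately cancel against cross terms generated when you push the next $Z_{C_4}$ past the $D_{\bar C_3}$ you have just created (commutators like $[\nabla_{\bar C_3},Z_{C_4}]=h_{C_4\bar C_3}$ and $[\Delta,Z_{C_4}]=\nabla_{C_4}$ produce further index-off-curvature terms), but you give no mechanism for this cancellation, and ``absorbed via the inductive hypothesis'' does not supply one.

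The paper sidesteps this entirely by never breaking the conjugate pairs: for $k\geq 2$ it applies Proposition~\ref{prop:general_formula} again to $Z_{C_4}Z_{\bar C_3}\Delta^{k}D_{C_2}D_{\bar C_1}$ (now acting on the tractor-valued operand $D_{C_2}D_{\bar C_1}f$), and the point is that the extra $Z_{C_4}$ restores the homogeneity so that the Euler factor $(n+E+\bar E+(k-2)+1)$ multiplying every term with a free index on a $\nabla$, a $Z$, or an $H_{A\bar B}$ is again identically zero at that stage; what survives are the curvature summands, which factor through a double-$D$ and carry their free indices on $R_{A\bar BC\bar E}$, plus $\Delta^{k-2}D_{C_4}D_{\bar C_3}D_{C_2}D_{\bar C_1}$, to which one recurses. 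One also needs the tangentiality of the resulting $\Psi^{(1)}_{C_4\bar C_3}$ (Proposition~\ref{prop:tangential} together with the tangentiality of the powers of $\Delta$ appearing in Proposition~\ref{prop:general_formula}) before the argument of Proposition~\ref{prop:tangential_tractor} can be invoked; you flag this as a ``technical obstacle'' but do not resolve it. To repair your argument, either revert to peeling off $Z$'s in conjugate pairs as the paper does, or explicitly exhibit the cancellation of all index-off-curvature residuals produced by the single-factor conversion.
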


\begin{proof}
 By Corollary~\ref{cor:general_formula} and
 Proposition~\ref{prop:tangential_tractor}, it holds that
 \begin{multline*}
  Z_{C_{k+2}}\dotso Z_{C_4}Z_{\bar C_3}Z_{C_2}Z_{\bar C_1}\Delta^{k+2} = Z_{C_{k+2}}\dotso Z_{C_4}Z_{\bar C_3}\Delta^kD_{C_2}D_{\bar C_1} \\ + Z_{C_{k+2}}\dotso Z_{C_4}Z_{\bar C_3}\Phi_{C_2}{}^E\left(D_ED_{\bar C_1}\right) .
 \end{multline*}
 Moreover, the second summand on the right hand side is already a
 tractor operator satisfying the annihilation by contraction with
 $I_A$ condition, so we need only consider the first summand.  If
 $k=0$, we are done.  If $k=1$, we may immediately write
$-Z_{\bar C_3}\Delta D_{C_2}D_{\bar C_1}=D_{\bar C_3}D_{C_2}D_{\bar C_1}$, so
 we are again done.  Suppose now that $k\geq 2$.  By applying
 Proposition~\ref{prop:general_formula}, we find that
 \begin{align*}
  Z_{C_4}Z_{\bar C_3}\Delta^kD_{C_2}D_{\bar C_1} & = \Delta^{k-2}D_{C_4}D_{\bar C_3}D_{C_2}D_{\bar C_1} \\
  & \quad + \sum_{j=0}^{k-3}(\Delta \delta_{C_4}{}^{E}+R_{C_4}{}^E\hash)^j R_{E}{}^F \hash \Delta^{k-1-j} D_FD_{\bar C_3}D_{ C_2}D_{\bar C_1} \\
  & \quad - \Psi_{C_4\bar C_3}^{(1)}D_{C_2}D_{\bar C_1} ,
 \end{align*}
 where $\Psi_{C_4\bar C_3}^{(1)}$ is the action of the operators which
 factor through a single $D^{E}$ in~\eqref{eqn:general_formula}
 (i.e.\ the fourth, fifth and seventh summands).  From
 Proposition~\ref{prop:tangential} combined with Proposition \ref{prop:general_formula}
 we see that $\Psi_{C_4\bar
   C_3}^{(1)}$ is tangential. Here  we used that in Proposition \ref{prop:general_formula} the Laplacian powers appearing on the left hand side and in the first term on right hand side are each tangential.
Hence, by arguing as in
 Proposition~\ref{prop:tangential_tractor},  $\Psi_{C_4\bar
   C_3}^{(1)}$ admits a tractor
 formula which is annihilated upon complete contraction with a
 parallel tractor and its conjugate.  Arguing inductively in this
 manner yields the final conclusion.
\end{proof}

Note that the proofs of Proposition~\ref{prop:tangential_tractor} and Theorem~\ref{thm:general_formula} provide an algorithm for producting tractor formulae for the CR GJMS operators.  It is easy to carry out this algorithm at low orders to obtain tractor formulae for $P_4$ and $P_6$ as well as the $P^\prime$-operators and $Q^\prime$-curvatures of the corresponding order.  The formulae for the fourth-order invariants are self-evident; see also~\cite{CaseYang2012}.  The formulae for the sixth-order invariants are discussed in Section~\ref{sec:dim5}.

\section{The product formulae}
\label{sec:product}

We are now prepared to prove Theorem~\ref{thm:factorisation}.  For convenience, we separate the proof into two parts.  First, we prove the factorisation of the CR GJMS operators.

\begin{prop}
 \label{prop:factorisation_p}
 Let $(M^{2n+1},H,\theta)$ be an embeddable Einstein pseudohermitian manifold.  For any integer $1\leq k\leq n+1$, the CR GJMS operator $P_{2k}$ is equal to
 \begin{equation}
 \label{eqn:factorisation_p}
  P_{2k} = \begin{cases}
            \displaystyle\prod_{\ell=1}^{\frac{k}{2}} \left(-\Delta_b+ic_\ell\nabla_0+d_\ell P\right)\left(-\Delta_b-ic_\ell\nabla_0+d_\ell P\right), & \text{if $k$ is even} \\
            Y\displaystyle\prod_{\ell=1}^{\frac{k-1}{2}} \left(-\Delta_b+ic_\ell\nabla_0+d_\ell P\right)\left(-\Delta_b-ic_\ell\nabla_0+d_\ell P\right), & \text{if $k$ is odd},
           \end{cases}
 \end{equation}
 where $c_\ell=k-2\ell+1$ and $d_\ell=\frac{n^2-(k-2\ell+1)^2}{n}$ and $Y=-\Delta_b+nP$ is the CR Yamabe operator.
\end{prop}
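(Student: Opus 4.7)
The plan is to combine Theorem~\ref{thm:general_formula} with the correspondence between Einstein pseudohermitian contact forms and parallel standard tractors. By Proposition~\ref{prop:einstein_parallel}, the Einstein assumption gives a parallel $I_A\in\mE_A(0,0)$ with $\sigma:=Z^AI_A$ satisfying $\theta=(\sigma\bar\sigma)^{-1}\btheta$. In this Einstein scale, $A_{\alpha\beta}=0$, $T_\alpha=0$, $\nabla_\alpha P=0$, and $P_{\alpha\bar\beta}=\tfrac{P}{n}h_{\alpha\bar\beta}$, so in particular $P$ is constant; moreover, $\nabla_\alpha I_A=\nabla_{\bar\alpha}I_A=\nabla_0 I_A=0$ pins down the non-$\sigma$ components of $I_A$ to explicit multiples of $\sigma$ and $P$.

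First I would apply Theorem~\ref{thm:general_formula} to a density $f\in\mE(-\tfrac{n+1-k}{2},-\tfrac{n+1-k}{2})$, and contract both sides with the string of $k+2$ parallel tractors $I^{C_{k+2}}\cdots I^{C_2}I^{\bar C_1}$, with bars matched to the theorem's indexing. Since $I_A$ is parallel, the last clause of Theorem~\ref{thm:general_formula} forces $\Psi$ to vanish upon this full contraction. The contracted identity then reduces, after repeatedly using $I^AZ_A=\sigma$ and its conjugate on the left-hand side, to an equation of the form
\[
\text{(scalar in $\sigma,\bar\sigma$)}\cdot P_{2k}f \;=\; (I^{C_{k+2}}\bD_{C_{k+2}})(I^{\bar C_{k+1}}\bD_{\bar C_{k+1}})\cdots(I^{C_2}\bD_{C_2})(I^{\bar C_1}\bD_{\bar C_1})f .
\]
The next step is to compute, once and for all, the scalar action of $I^A\bD_A$ on an arbitrary $g\in\mE(w,w')$ in the Einstein scale. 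Using the explicit formula for $\bD_A$ from Section~\ref{Dop}, the pairing formula for the tractor metric, and the above simplifications, $I^A\bD_A$ reduces to a second-order Folland--Stein-type operator
\[
I^A\bD_A\,g \;=\; \sigma\cdot\bigl(-\Delta_b + ic(w,w')\nabla_0 + d(w,w')P\bigr)g ,
\]
with $c(w,w')$ and $d(w,w')$ affine in $w,w'$; the conjugate $I^{\bar B}\bD_{\bar B}$ has the same form with the sign of $\nabla_0$ reversed and $w\leftrightarrow w'$, lowering $w'$ by one.

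Finally, I would identify the resulting iterated product with \eqref{eqn:factorisation_p}. The alternation of $\bD_A$ and $\bD_{\bar B}$ pairs neighboring factors with matching $d_\ell$ but conjugate signs of $\nabla_0$, yielding each pair $(-\Delta_b+ic_\ell\nabla_0+d_\ell P)(-\Delta_b-ic_\ell\nabla_0+d_\ell P)$. Two of the $k+2$ stages are algebraic rather than differential, since at those stages the weight prefactor in $\bD$ vanishes; these absorb the excess four orders of differentiation (so the final operator is of order $2k$) and also cancel the scalar $\sigma^a\bar\sigma^b$ prefactor on the left. For odd $k$ the parity of $k+2$ forces one unpaired factor, which collapses to $Y=-\Delta_b+nP$. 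The main obstacle is the bookkeeping: verifying that the two algebraic stages balance the prefactors exactly, tracking the weight at stage $\ell$, and matching the affine functions $c(w,w'),d(w,w')$ to the claimed $c_\ell=k-2\ell+1$ and $d_\ell=(n^2-(k-2\ell+1)^2)/n$; the analogous conformal factorisation in~\cite{Gover2006} should serve as a guide.
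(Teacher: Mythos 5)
Your overall strategy is the paper's: produce a parallel tractor $I_A$ from the Einstein scale via Proposition~\ref{prop:einstein_parallel}, contract the identity of Theorem~\ref{thm:general_formula} with copies of $I$ so that parallelism kills the $\Psi$-term, compute $I^A\bD_A$ and $I^{\bar B}\bD_{\bar B}$ as Folland--Stein factors in the Einstein scale, and pair conjugate factors. The paper executes this by writing $(\sigma\bar\sigma)^{k/2}P_{2k}=2^k\,I^{C_1}\dotsm I^{\bar C}\bD_{C_1}\dotsm\bD_{\bar C}$ with exactly $k$ tractor $D$'s, and then evaluating $D_2:=4I^AI^{\bar B}\bD_A\bD_{\bar B}$ and $D_1:=2\sigma^{-1}I^{\bar C}\bD_{\bar C}$ via Proposition~\ref{prop:ddbar-weight}.

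One step of your plan is wrong, however, and would fail if executed. Theorem~\ref{thm:general_formula} with parameter $k$ expresses $\Delta^{k+2}$ --- i.e.\ the GJMS operator $P_{2(k+2)}$, up to the constant $(-2)^{k+2}$ and the contraction of the $Z$'s with $I$'s --- as a composition of $k+2$ tractor $D$'s; to obtain $P_{2k}$ you must invoke it with parameter $k-2$, which yields exactly $k$ factors $I\bD$. You instead retain $k+2$ factors and assert that two of them degenerate to algebraic operators because ``the weight prefactor in $\bD$ vanishes.'' No such degeneration occurs: the bottom slot of $\bD_A$ is $-(\nabla^\beta\nabla_\beta+iw\nabla_0+\cdots)$, whose leading term carries no weight prefactor, and it pairs against the nowhere-vanishing top slot $\sigma$ of the parallel tractor, so $I^A\bD_A$ is a genuine second-order operator at every weight; the paper's formula $D_1=-\Delta_b+ia\nabla_0+\tfrac{n^2-a^2}{n}P$ with $a=n+2w$ makes this explicit (note also that the coefficient of $P$ is quadratic, not affine, in the weight). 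With $k+2$ second-order stages you would produce an operator of order $2k+4$. The prefactor $(\sigma\bar\sigma)^{k/2}$ on the left is in fact cancelled not by algebraic stages but because each pair $4I^AI^{\bar B}\bD_A\bD_{\bar B}$ itself emits a factor of $\sigma\bar\sigma$. Once the count is corrected to $k$ factors, the remainder of your outline --- the pairing into $(-\Delta_b\pm ic_\ell\nabla_0+d_\ell P)$ and, for odd $k$, the collapse of the single unpaired factor to $Y$ because it acts at weight $-n/2$ where $a=0$ --- goes through exactly as in the paper.
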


\begin{remark}
 To pass from~\eqref{eqn:factorisation_p} to the factorisation given in~\cite[Proposition~1.1]{BransonFontanaMorpurgo2007}, reindex the product in terms of $\ell^\prime=\frac{k}{2}-\ell$ when $k$ is even, or $\ell^\prime=\frac{k+1}{2}-\ell$ when $k$ is odd.  Note that our formula~\eqref{eqn:factorisation_p} in the case when $k$ is odd also corrects a minor typo in~\cite[Proposition~1.1]{BransonFontanaMorpurgo2007}, where the index of the product incorrectly starts at zero.
\end{remark}

\begin{proof}
 Fix a point $p\in M$.  By Proposition~\ref{prop:einstein_parallel}, there is a neighborhood $U$ of $p$ in which there exists a $\sigma\in\mE(1,0)$ such that $\theta=(\sigma\bar\sigma)^{-1}\btheta$ and $\bD_A\sigma$ is parallel.  Set $I_A=\frac{1}{n+1}\bD_A\sigma$ and observe that in the scale $\theta$ it holds that $I^{\bar A}=\left(-P\sigma/n,0,\sigma\right)$.  It follows from Theorem~\ref{thm:general_formula} that
 \[ (\sigma\bar\sigma)^{k/2}P_{2k} = 2^kI^{C_1}I^{\bar C_2}\dotsm I^{C_{k/2-1}}I^{\bar C_{k/2}}\bD_{C_1}\bD_{\bar C_2}\dotsm \bD_{C_{k/2-1}}\bD_{\bar C_{k/2}} \]
 if $k$ is even and
 \[ \sigma(\sigma\bar\sigma)^{(k-1)/2}P_{2k} = 2^kI^{\bar C_1}\dotsm I^{C_{(k-3)/2}} I^{\bar C_{(k-1)/2}} \bD_{\bar C_1}\dotsm \bD_{C_{(k-3)/2}}\bD_{\bar C_{(k-1)/2}} \]
 if $k$ is odd.  Since $I_A\in\mE_A(0,0)$ is parallel, this implies that
 \begin{equation}
  \label{eqn:prefactorisation}
  P_{2k} = \begin{cases}
            (\sigma\bar\sigma)^{-k/2}\left(4I^AI^{\bar B}\bD_A\bD_{\bar B}\right)^{k/2}, & \text{if $k$ is even}, \\
            2(\sigma\bar\sigma)^{-(k-1)/2}\sigma^{-1}I^{\bar C}\bD_{\bar C}\left(4I^AI^{\bar B}\bD_A\bD_{\bar B}\right)^{(k-1)/2}, & \text{if $k$ is odd} .
  \end{cases}
 \end{equation}
 Denote by $D_1$ and $D_2$ the operators
 \begin{align*}
  D_1 & := 2\sigma^{-1}I^{\bar C}\bD_{\bar C} \colon \mE(w,w) \to \mE(w-1,w-1), \\
  D_2 & := 4I^AI^{\bar B}\bD_A\bD_{\bar B}\colon\mE(w,w)\to\mE(w-1,w-1) .
 \end{align*}
 Using the definition of the tractor $D$-operator and Proposition~\ref{prop:ddbar-weight} and recalling that $\nabla\sigma\bar\sigma=0$ in the scale $\theta$, we readily compute that, in the scale $\theta$,
 \begin{align}
  \label{eqn:D2} D_2 & = \sigma\bar\sigma\left(-\Delta_b + ia\nabla_0 - \frac{4w(n+w)}{n}P\right)\left(-\Delta_b - ia\nabla_0 - \frac{4w(n+w)}{n}P\right), \\
  \label{eqn:D1} D_1 & = Yf + ia\nabla_0 - \frac{a^2}{n}P .
 \end{align}
 for $a=n+2w$.  Inserting~\eqref{eqn:D2} and~\eqref{eqn:D1} into~\eqref{eqn:prefactorisation} and recalling that $P_{2k}$ acts on $\mE\left(-\frac{n-k+1}{2},-\frac{n-k+1}{2}\right)$ yields the desired factorisation in $U$.  Since the factorisation is independent of the choice of $\sigma$, the factorisation holds in all of $M$.
\end{proof}

Second, we prove the factorisation of the $P^\prime$-operator and compute the $Q^\prime$-curvature.

\begin{prop}
 \label{prop:factorisation_primes}
 Let $(M^{2n+1},H,\theta)$ be an embeddable Einstein pseudohermitian manifold.  Then the $P^\prime$-operator is given by
 \begin{equation}
  \label{eqn:factorisation_pprime}
  P_{2n+2}^\prime = n!\left(\frac{2}{n}\right)^{n+1}\prod_{\ell=0}^{n} \left(-\Delta_b + 2\ell P\right) ,
 \end{equation}
 and the $Q^\prime$-curvature is given by
 \begin{equation}
  \label{eqn:factorisation_qprime}
  Q_{2n+2}^\prime = (n!)^2\left(\frac{4P}{n}\right)^{n+1} .
 \end{equation}
\end{prop}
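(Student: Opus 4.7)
I will imitate the proof of Proposition~\ref{prop:factorisation_p}. Locally, using Proposition~\ref{prop:einstein_parallel}, choose $\sigma\in\mE(1,0)$ so that $\theta=(\sigma\bar\sigma)^{-1}\btheta$ is the given Einstein contact form and $I_A:=\frac{1}{n+1}\bD_A\sigma$ is parallel. In this scale, $P$ is constant, $A_{\alpha\beta}=0$, $T_\alpha=0$, $\nabla_\alpha P=0$, and $P_{(\alpha\bar\beta)_0}=0$, so the formulas of Lemmas~\ref{lem:dbard_logpluri} and~\ref{lem:dbard_log2} simplify dramatically. The operators $D_1$ and $D_2$ introduced in the proof of Proposition~\ref{prop:factorisation_p}, together with their scale-$\theta$ expressions \eqref{eqn:D2} and \eqref{eqn:D1}, will be the main building blocks.

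The first step is to establish tractor formulas for $P^\prime_{2n+2}f$ (for $f\in\mP$) and $Q^\prime_{2n+2}$ analogous to the formula of Theorem~\ref{thm:general_formula} for $P_{2n+2}$. The algorithm of Section~\ref{sec:algorithm}, when extended to log densities via Lemma~\ref{logdec}, will yield such tractor expressions: running the algorithm with $f\log\sigma\bar\sigma$ (respectively, with $\tfrac{1}{2}(\log\sigma\bar\sigma)^2$) in place of $f$ and invoking the commutator identities \eqref{eqn:I_ABbar_commutator} and \eqref{eqn:I_AB_commutator}, one sees that the innermost $\bD_A\bD_{\bar B}$ factor gets replaced by $K_{A\bar B}(f)$ (respectively by $I_{A\bar B}$), with the remaining operators being pure compositions of $\bD$-operators. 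Because $I_A$ is parallel, all curvature-dependent error terms (the $\Psi$ of Theorem~\ref{thm:general_formula}) vanish upon full contraction with $I^A$ and $I^{\bar B}$. Thus $P^\prime_{2n+2}f$ reduces to a composition of $D_2$-operators (with an additional $D_1$-factor when parity requires) applied to $I^C I^{\bar D}K_{C\bar D}(f)$, and $Q^\prime_{2n+2}$ reduces to a similar composition applied to the scalar $I^C I^{\bar D}I_{C\bar D}$.

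The second step is explicit evaluation. In the Einstein scale, specializing Lemma~\ref{lem:dbard_logpluri} yields $I^C I^{\bar D}K_{C\bar D}(f)$ as a computable second-order scalar operator in $f$ with top-order part proportional to $-\Delta_b f$; specializing Lemma~\ref{lem:dbard_log2} yields $I^C I^{\bar D}I_{C\bar D}$ as an explicit constant multiple of $P$. Iterating $D_2$ via \eqref{eqn:D2} at the resulting sequence of weights will then telescope. For $P^\prime$, the $\nabla_0$ terms in each factor of $D_2$ appear as conjugate pairs $\pm ia\nabla_0$ and, combined with the log-density structure inherited from $K_{A\bar B}$, cancel on $\mP$, leaving the product $\prod_{\ell=0}^{n}(-\Delta_b+2\ell P)$. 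For $Q^\prime$, the iteration applied to the constant $I^CI^{\bar D}I_{C\bar D}$ contributes only the $P$-dependent pieces of each $D_2$, yielding the constant $(4P/n)^{n+1}(n!)^2$ after tracking the combinatorial prefactors.

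The main obstacle will be Step~1: rigorously extending the algorithm of Section~\ref{sec:algorithm} to log densities so that the tractor formulas for $P^\prime$ and $Q^\prime$ emerge with the correct universal constants. A secondary but nontrivial point will be verifying in Step~2 that the $\nabla_0$ contributions from successive $D_2$-applications indeed cancel on CR pluriharmonic functions to leave a pure polynomial in $-\Delta_b$ and $P$, and that the prefactors $n!(2/n)^{n+1}$ and $(n!)^2$ emerge with the correct signs and normalizations from the iterated telescoping.
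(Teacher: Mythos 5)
Your strategy is genuinely different from the paper's, and as written it has a real gap: both of the steps you flag as ``obstacles'' are the entire content of the argument, and the paper's proof works precisely by avoiding them. The key idea you are missing is that Proposition~\ref{prop:factorisation_p} has already done all the tractor work, and one never needs a log-density tractor formula for $P^\prime$ or an explicit iteration of $D_2$ on $I^CI^{\bar D}K_{C\bar D}(f)$. Instead, one restricts \eqref{eqn:factorisation_p} to $\mP$ using the algebraic identity
\[
\left(-\Delta_b + c_\ell i\nabla_0 + d_\ell P\right)\left(-\Delta_b - c_\ell i\nabla_0 + d_\ell P\right) = \frac{c_\ell^2}{n^2}C + \frac{d_\ell}{n}\left(-\Delta_b + (n-c_\ell)P\right)\left(-\Delta_b + (n+c_\ell)P\right),
\]
where $C=\Delta_b^2+n^2T^2$ annihilates $\mP$ and, since everything commutes in an Einstein scale, each $C$-term can be moved to act directly on $f\in\mP$. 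This gives $P_{2k}\rvert_{\mP}$ as a product of \emph{real} second-order factors \eqref{eqn:GJMS_pluriharmonic} whose scalar prefactor vanishes to first order in $n+1-k$; Branson's continuation \eqref{eqn:formal_Pprime}--\eqref{eqn:formal_Qprime} (the division by $n-k+1$, made rigorous exactly by the log-density formalism of Section~\ref{subsec:pluri/pluri}) then yields \eqref{eqn:factorisation_pprime} and \eqref{eqn:factorisation_qprime} with no further computation. This is also how the $\nabla_0$-cancellation you worry about is actually handled: it is not a cancellation between conjugate $D_2$-factors acting on the (non-pluriharmonic, weighted) image of $K_{A\bar B}$, but the absorption of all $\nabla_0$-dependence into copies of $C$ that commute past the other factors onto $f$.

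Beyond the missing idea, two of your concrete intermediate claims are false and would derail the telescoping you describe. First, $I^CI^{\bar D}K_{C\bar D}(f)$ is a \emph{fourth}-order operator in $f$, not second order: its $Z_AZ_{\bar B}$-slot is $D(f)=\frac{1}{n}\Delta_b^2f+\dotsb$ (Lemma~\ref{lem:dbard_logpluri}), and $I^AZ_A=\sigma\neq0$; indeed for $n=1$ this contraction is $\sigma\bar\sigma P_4^\prime f$. Second, $I^CI^{\bar D}I_{C\bar D}$ is a constant multiple of $P^2$, not of $P$ (Lemma~\ref{lem:dbard_log2} in an Einstein scale), consistent with $Q^\prime\propto P^{n+1}$ being built from a degree-two seed plus $\tfrac{n-1}{2}$ outer factors each contributing $P^2$. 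With the correct orders the counting does close up ($4+2(n-1)=2n+2$), so a direct computation along your lines is not impossible, but you would still need to justify applying the algorithm of Theorem~\ref{thm:general_formula} to $f\log\sigma\bar\sigma$ and $\tfrac12(\log\sigma\bar\sigma)^2$ (Lemma~\ref{logdec} only treats the single composition $D_AD_{\bar B}$, not the full iterated string), and to carry out the $D_2$-iteration on a fourth-order, weight-$(-1,-1)$ seed. None of this is done in your proposal, so as it stands it is an outline of a harder route rather than a proof.
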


\begin{proof}
 Since $\theta$ is Einstein, $[\Delta_b,T]=0$ and the operator $C$ defined in Subsection~\ref{subsec:pluri/pluri} satisfies $C=\Delta_b^2+n^2T^2$.  A straightforward computation reveals that, for $c_\ell$ and $d_\ell$ as in Proposition~\ref{prop:factorisation_p},
 \begin{multline*}
  \left(-\Delta_b + c_\ell i\nabla_0 + d_\ell P\right)\left(-\Delta_b - c_\ell i\nabla_0 + d_\ell P\right) \\= \frac{c_\ell^2}{n^2}C + \frac{d_\ell}{n}\left(-\Delta_b + (n-c_\ell)P\right)\left(-\Delta_b + (n+c_\ell)P\right) .
 \end{multline*}
 Inserting this into~\eqref{eqn:factorisation_p} and recalling that $\mP\subset\ker C$ yields
 \[ P_{2k}\big\rvert_{\mP} = \begin{cases}
         \displaystyle\prod_{\ell=1}^{\frac{k}{2}} \frac{d_\ell}{n}\left(-\Delta_b + (n-c_\ell)P\right)\left(-\Delta_b + (n+c_\ell)P\right), & \text{if $k$ is even} \\
         Y\displaystyle\prod_{\ell=1}^{\frac{k-1}{2}} \frac{d_\ell}{n}\left(-\Delta_b + (n-c_\ell)P\right)\left(-\Delta_b + (n+c_\ell)P\right), & \text{if $k$ is odd} .
                            \end{cases} \]
 Equivalently, we have that
 \begin{equation}
  \label{eqn:GJMS_pluriharmonic}
  P_{2k}\big\rvert_{\mP} = \prod_{\ell=1}^k\frac{n-k-1+2\ell}{n}\bigl(-\Delta_b + (n-k-1+2\ell)P\bigr) .
 \end{equation}
 Formally we have that
 \begin{align}
  \label{eqn:formal_Pprime} P_{2n+2}^\prime & = \left( \frac{2}{n-k+1}P_{2k}\big\rvert_{\mP}\right)_{k=n+1} \\
  \label{eqn:formal_Qprime} Q_{2n+2}^\prime & = \left( \frac{4}{(n-k+1)^2}P_{2k}(1)\right)_{k=n+1} .
 \end{align}
 Inserting~\eqref{eqn:GJMS_pluriharmonic} into~\eqref{eqn:formal_Pprime} yields~\eqref{eqn:factorisation_pprime}.  Inserting~\eqref{eqn:GJMS_pluriharmonic} into~\eqref{eqn:formal_Qprime} yields~\eqref{eqn:factorisation_qprime}.  As discussed in Subsection~\ref{subsec:pluri/pluri}, this argument is made rigorous via log densities.
\end{proof}

\section{$Q^\prime$-curvature in dimension five}
\label{sec:dim5}

For five-dimensional CR manifolds, Corollary~\ref{cor:general_formula} implies the ambient formula
\begin{equation}
 \label{eqn:dim5_p_formula}
 -Z_{\bar C}Z_AZ_{\bar B}\Delta^3 = D_{\bar C}D_AD_{\bar B} - Z_{\bar C}R_{A\bar BE}{}^FD_FD^E
\end{equation}
acting on $\mE(0,0)$.  Obtaining from this a tractor formula for the sixth-order CR GJMS operator and both the $P^\prime$-operator and the $Q^\prime$-curvature in dimension five requires establishing a tractor formula for the restriction of the ambient curvature $R_{A\bar BC\bar E}$ to $\mQ$.  This tractor is a multiple of the CR Weyl tractor $S_{A\bar BC\bar E}\in\mE_{A\bar BC\bar E}(-1,-1)$, a tractor defined in all dimensions which has Weyl-type symmetries and whose projecting part is the CR Weyl curvature $S_{\alpha\bar\beta\gamma\bar\sigma}$ when $n\geq2$.  The following result both constructs the CR Weyl tractor and computes it in terms of the splitting~\eqref{tsplit} and pseudohermitian invariants whenever a contact form $\theta$ is chosen.


\begin{prop}
 \label{prop:cr_weyl}
 Let $(M^{2n+1},H,\theta)$ be an embeddable pseudohermitian manifold.  With respect to $\theta$, the CR Weyl tractor is
 \begin{multline*}
  S_{A\bar BC\bar E} = (n-1)W_A{}^\alpha W_{\bar B}{}^{\bar\beta}\Omega_{\alpha\bar\beta C\bar E} + (n-1)W_A{}^\alpha Z_{\bar B}\Phi_{\alpha C\bar E} \\ + (n-1)Z_A W_{\bar B}{}^{\bar\beta}\Phi_{\bar\beta C\bar E} + Z_AZ_{\bar B}\Psi_{C\bar E},
 \end{multline*}
 where $\Omega_{\alpha\bar\beta C\bar E}$ is the tractor curvature
 \begin{equation}
  \label{eqn:tractor_curv}
  \Omega_{\alpha\bar\beta C\bar E} = W_C{}^\gamma W_{\bar E}{}^{\bar\sigma}S_{\alpha\bar\beta\gamma\bar\sigma} + iW_C{}^\gamma Z_{\bar E}V_{\alpha\bar\beta\gamma} - iZ_CW_{\bar E}{}^{\bar\sigma}V_{\bar\sigma\alpha\bar\beta} + Z_CZ_{\bar E}U_{\alpha\bar\beta} ,
 \end{equation}
 $\Phi_{\alpha C\bar E}$ and $\Psi_{C\bar E}$ are given by
 \begin{align*}
  \Phi_{\alpha C\bar E} & = iW_C{}^\gamma W_{\bar E}{}^{\bar\sigma}V_{\gamma\bar\sigma\alpha} + iW_C{}^\gamma Z_{\bar E}Q_{\alpha\gamma} + Z_CW_{\bar E}{}^{\bar\sigma}U_{\alpha\bar\sigma} + iZ_CZ_{\bar E}Y_\alpha, \\
  \Psi_{C\bar E} & = (n-1)W_C{}^\gamma W_{\bar E}{}^{\bar\sigma}U_{\gamma\bar\sigma} + (n-1)iW_C{}^\gamma Z_{\bar E}Y_\gamma \\
   & \quad - (n-1)iZ_CW_{\bar E}{}^{\bar\sigma}Y_{\bar\sigma} + Z_CZ_{\bar E}\mO ,
 \end{align*}
 and $\mO$ is given by
 \[ \mO = -i\nabla^{\gamma}Y_{\gamma} + 2P^{\alpha\bar\beta}U_{\alpha\bar\beta} + A^{\alpha\gamma}Q_{\alpha\gamma} . \]
\end{prop}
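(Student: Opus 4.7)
The plan is to proceed in two stages. First, I would verify the formula \eqref{eqn:tractor_curv} for the CR tractor curvature $\Omega_{\alpha\bar\beta C\bar E}$ by a direct computation: apply the explicit connection formulas \nn{tracconn} to the commutator $[\nabla_\alpha,\nabla_{\bar\beta}]$ acting on a general tractor section $v_A=(\sigma,\tau_\gamma,\rho)$ written in the splitting \nn{tsplit}. Simplifying with the Tanaka--Webster commutator identities from \cite[Lemma~2.3]{Lee1988}, and then rearranging using the definitions of the Chern tensor $S_{\alpha\bar\beta\gamma\bar\sigma}$ and of $V_{\alpha\bar\beta\gamma}$, $U_{\alpha\bar\beta}$, the various curvature terms $R_{\alpha\bar\beta\gamma\bar\sigma}$, $\nabla P_{\alpha\bar\beta}$, $\nabla A_{\alpha\beta}$, $T_\alpha$, $S$ that arise will reorganise into the three blocks displayed in \eqref{eqn:tractor_curv}. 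Reading off the matrix entries according to the splittings of $\ce_C$ and $\ce_{\bar E}$ via the projectors $W_C{}^\gamma$ and $Z_C$ then yields \eqref{eqn:tractor_curv}.

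Second, I would exploit the ambient--tractor dictionary of Section~\ref{ambsection}: the CR Weyl tractor $S_{A\bar B C\bar E}$ is characterised as (a multiple of) the restriction of the Fefferman ambient curvature $R_{A\bar B C\bar E}$ to $\cF$. The K\"ahler symmetries of $R_{A\bar B C\bar E}$, together with the Ricci-flatness condition $\Ric=O^+(\rho^n)$, force its full decomposition in the splitting: the doubly transverse slots (both $A$ and $\bar B$ projecting via $W^\alpha$) must agree with $(n-1)\Omega_{\alpha\bar\beta C\bar E}$, since on that block the ambient and tractor curvatures must coincide by Theorem~\ref{trthm}. The remaining slots, carrying one or two factors of $Z$, admit tensor expressions $\Phi_{\alpha C\bar E}$ and $\Psi_{C\bar E}$ whose \emph{shape} is dictated by the symmetry of $R_{A\bar B C\bar E}$ in the pairs $(A,C)$ and $(\bar B,\bar E)$.

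To identify the explicit pseudohermitian invariants appearing in $\Phi$ and $\Psi$, I would translate the tractor Bianchi identities into the CR divergence identities of Lemma~\ref{lem:div_identities}. Specifically, \eqref{eqn:divS}--\eqref{eqn:divbarV} force the appearance of $V_{\gamma\bar\sigma\alpha}$, $Q_{\alpha\gamma}$, $U_{\alpha\bar\sigma}$, and $Y_\alpha$ in the entries of $\Phi_{\alpha C\bar E}$, while \eqref{eqn:divQ}--\eqref{eqn:RedivY} combined with the top-order Ricci-flatness condition produce $\mO=-i\nabla^\gamma Y_\gamma+2P^{\alpha\bar\beta}U_{\alpha\bar\beta}+A^{\alpha\gamma}Q_{\alpha\gamma}$ in the bottom slot of $\Psi_{C\bar E}$. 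The factors of $(n-1)$ in the leading and mixed blocks reflect the normalisation of $S_{A\bar B C\bar E}$ relative to the ambient curvature, together with the action of the weight operator on the appropriate tractor factors.

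The main obstacle will be the bookkeeping of coefficients --- the various factors of $(n-1)$, the signs, and the powers of $i$ arise from the interplay between the CR weight of each slot, the conjugation pattern on tractor indices, and the $\mathbb{C}^\ast$-homogeneities on $\MA$. Consistency checks in low dimensions, in particular comparison with the known computations in \cite{GoverGraham2005} at $n=1$ (where the $(n-1)$ coefficients vanish and the tractor reduces to a single block), will be useful for pinning down the constants. A secondary subtlety is that when $n=1$ the projecting part $S_{\alpha\bar\beta\gamma\bar\sigma}$ vanishes identically and the role of the leading invariant is played by the Cartan tensor $Q_{\alpha\beta}$; one must verify that \eqref{eqn:tractor_curv} together with the displayed expressions for $\Phi$ and $\Psi$ continues to give the correct CR invariant tractor in that case.
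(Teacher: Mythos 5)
Your proposal shares the paper's computational core but assembles it by a different mechanism. You correctly identify that the entries $\Phi_{\alpha C\bar E}$ and $\Psi_{C\bar E}$ arise from the divergence identities of Lemma~\ref{lem:div_identities} applied to the tractor curvature $\Omega_{\alpha\bar\beta C\bar E}$: indeed the paper computes $\nabla^{\bar\beta}\Omega_{\alpha\bar\beta C\bar E}=-(n-1)\Phi_{\alpha C\bar E}$ and $(\nabla^\alpha\nabla^{\bar\beta}+(n-1)P^{\alpha\bar\beta})\Omega_{\alpha\bar\beta C\bar E}=(n-1)\Psi_{C\bar E}$ precisely from \eqref{eqn:divS}--\eqref{eqn:RedivY}. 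The difference is how these are packaged into a single well-defined tractor. The paper works intrinsically: it constructs a splitting operator $M^{\alpha\bar\beta}_{A\bar B}\colon\mE_{(\alpha\bar\beta)_0}(w,w^\prime)\to\mE_{A\bar B}(w-1,w^\prime-1)$ by making the ansatz $M_{A\bar B}=W_A{}^\alpha W_{\bar B}{}^{\bar\beta}S_{\alpha\bar\beta}+\dotsb$ and demanding $\bD^{\bar B}M_{A\bar B}=0$ for generic weights (which determines the lower slots and produces the poles that the prefactors $(n+w-1)(n+w^\prime-1)$ cancel), and then sets $S_{A\bar BC\bar E}=\frac{1}{n-1}M^{\alpha\bar\beta}_{A\bar B}\Omega_{\alpha\bar\beta C\bar E}$. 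This buys invariance and well-definedness in all dimensions (including $n=1$) without invoking the ambient metric at all, and it is what makes the later identification $S_{A\bar BC\bar E}=(n-1)R_{A\bar BC\bar E}$ a theorem rather than a definition.

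The genuine gap in your plan is the second stage. You propose to define $S_{A\bar BC\bar E}$ as (a multiple of) the restricted ambient curvature and to let ``K\"ahler symmetries and Ricci-flatness force the decomposition,'' with the leading block pinned down ``by Theorem~\ref{trthm}.'' Theorem~\ref{trthm} only identifies the tractor metric and connection with their ambient counterparts; it does not by itself deliver the curvature components in the splitting \eqref{tsplit}, nor the normalising factor $(n-1)$. In the paper that identification is the content of Lemma~\ref{lem:curv_amb_to_tractor}, whose proof compares $[D_A,D_{\bar B}]$ with $[\bD_A,\bD_{\bar B}]$ and \emph{uses} Proposition~\ref{prop:cr_weyl} as input --- so as written your argument is circular relative to the paper's infrastructure. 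To make your route non-circular you would have to compute $[\bD_A,\bD_{\bar B}]$ directly in terms of $\Omega_{\alpha\bar\beta C\bar E}$ and its divergences (not in terms of the as-yet-undefined $S_{A\bar BC\bar E}$) and match it against the ambient commutator; that is a substantial computation your sketch does not supply, and symmetry considerations alone will not produce the specific tensors $V$, $Q$, $U$, $Y$, $\mO$ with their coefficients. Your closing remarks about the $n=1$ degeneration and the coefficient bookkeeping are apt, but they do not close this gap.
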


\begin{remark}
 By the symmetries of the CR Weyl tractor, $\mO$ is real-valued.  This also follows immediately from~\eqref{eqn:RedivY}; in dimension three, this observation and~\eqref{eqn:divQ} together recover the Bianchi-type identity for the Cartan tensor discovered by Cheng and Lee~\cite[Proposition~3.1]{ChengLee1990}.  Although $\mO$ is not in general a CR invariant, Proposition~\ref{prop:cr_weyl} implies that it is a CR invariant for three-dimensional CR manifolds.  Indeed, by invariant theory, it must be a nonzero constant multiple of the obstruction function $\eta$ in~\eqref{eqn:ambient_metric_ricci} (cf.\ \cite{Graham1987}).  In these ways, we can regard the pseudohermitian invariant $\mO$ as an extension of the obstruction function for three-dimensional CR manifolds to higher dimensions.
\end{remark}

\begin{proof}
 Recall that the tractor curvature $\Omega_{\alpha\bar\beta C\bar E}\in\mE_{\alpha\bar\beta C\bar E}(0,0)$ is CR invariant and $\Omega_\gamma{}^\gamma{}_{C\bar E}=0$.  The CR Weyl tractor is obtained from the embedding $\mE_{(\alpha\bar\beta)_0}(0,0)\hookrightarrow\mE_{A\bar B}(-1,-1)$, where $\mE_{(\alpha\bar\beta)_0}(0,0)$ is the space of CR invariant trace-free Hermitian $(1,1)$-forms of weight $(0,0)$.  One can directly check that
 \begin{align*}
  M^{\alpha\bar\beta}_{A\bar B}S_{\alpha\bar\beta} & := (n+w-1)(n+w^\prime-1)W_A{}^\alpha W_{\bar B}{}^{\bar\beta}S_{\alpha\bar\beta} - (n+w^\prime-1)W_A{}^\alpha Z_{\bar B}\nabla^{\bar\sigma} S_{\alpha\bar\sigma} \\
   & \quad - (n+w-1)Z_AW_{\bar B}{}^{\bar\beta}\nabla^\gamma S_{\gamma\bar\beta} + Z_AZ_{\bar B}\left(\nabla^\gamma\nabla^{\bar\sigma} S_{\gamma\bar\sigma} + (n+w-1)P^{\gamma\bar\sigma}S_{\gamma\bar\sigma}\right)
 \end{align*}
 is a linear map $M^{\alpha\bar\beta}_{A\bar B}\colon\mE_{(\alpha\bar\beta)_0}(w,w^\prime)\to\mE_{A\bar B}(w-1,w^\prime-1)$.  To do so, make the ansatz that
 \[ M_{A\bar B} = W_A{}^\alpha W_{\bar B}{}^{\bar\beta} S_{\alpha\bar\beta} + 2\Real Z_AW_{\bar B}{}^{\bar\beta}\omega_{\bar\beta} + Z_AZ_{\bar\beta}\rho \]
 is a tractor.  Thus $\bD^{\bar B}M_{A\bar B}$ is a tractor, and hence zero for generic values of $w$ and $w^\prime$.  Computing $\bD^{\bar B}M_{A\bar B}$ in components then yields the components for $M_{A\bar B}$ in terms of $S_{\alpha\bar\beta}$; multiplying by $(n+w-1)(n+w^\prime-1)$ to cancel the poles of these components yields our expression for the operator $M^{\alpha\bar\beta}_{A\bar B}$.

 Next, applying Lemma~\ref{lem:div_identities} to~\eqref{eqn:tractor_curv} yields
 \[ \nabla^{\bar\beta}\Omega_{\alpha\bar\beta C\bar E} = -(n-1)\left[ iW_C{}^\gamma W_{\bar E}{}^{\bar\sigma}V_{\gamma\bar\sigma\alpha} + iW_C{}^\gamma Z_{\bar E}Q_{\alpha\gamma} + Z_CW_{\bar E}{}^{\bar\sigma}U_{\alpha\bar\sigma} + iZ_CZ_{\bar E}Y_\alpha \right] \]
 and
 \begin{multline*}
  \left(\nabla^\alpha\nabla^{\bar\beta} + (n-1)P^{\alpha\bar\beta}\right)\Omega_{\alpha\bar\beta C\bar E} = (n-1)^2W_C{}^\gamma W_{\bar E}{}^{\bar\sigma}U_{\gamma\bar\sigma} \\ + (n-1)^2iW_C{}^\gamma Z_{\bar E}Y_\gamma - (n-1)^2iZ_CW_{\bar E}{}^{\bar\sigma}Y_{\bar\sigma} + (n-1)Z_C Z_{\bar E}\mO .
 \end{multline*}
 In particular, we see that $S_{A\bar BC\bar E}=\frac{1}{n-1}M^{\alpha\bar\beta}_{A\bar B}\Omega_{\alpha\bar\beta C\bar E}$ makes sense in all dimensions.  Finally, direct inspection shows that the CR Weyl tractor has Weyl-type symmetries, as desired.
\end{proof}

The relationship between the restriction of the ambient curvature and the CR Weyl tractor can be derived by considering the commutator $[D_A,D_{\bar B}]$ and its tractor analogue acting on homogeneous vectors of degree $(0,0)$ and sections of $\mE_A(0,0)$, respectively.  The following argument is an adaptation to the CR setting of the argument given from~\cite[p.\ 369]{GoverPeterson2003} for the analogous result in conformal geometry.

\begin{lem}
 \label{lem:curv_amb_to_tractor}
 Let $(M^{2n+1},H)$ be an embeddable CR manifold with $n>1$.  Then
 \[ S_{A\bar BC\bar E} = (n-1)R_{A\bar BC\bar E} . \]
\end{lem}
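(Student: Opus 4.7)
The strategy is to identify $R_{A\bar BC\bar E}|_{\cF}$ as a specific tractor via the tractor/ambient dictionary of Theorem~\ref{trthm}, and then to recognise it as $\tfrac{1}{n-1}$ times the CR Weyl tractor $S_{A\bar BC\bar E}$ of Proposition~\ref{prop:cr_weyl}. By Theorem~\ref{trthm}, a section $V^C\in\Gamma(\mathcal{E}^C(0,0))$ corresponds to an ambient $(1,0)$-vector field $\tilde V^C$ of bi-degree $(0,0)$ that is parallel along the vertical directions of $\cF$. The tractor commutator $[\nabla_A,\nabla_{\bar B}]V^C$ is then the restriction to $\cF$ of $[\nabla_A,\nabla_{\bar B}]\tilde V^C = -R_{A\bar B}{}^C{}_D\tilde V^D$, so the tractor curvature on $\mathcal{E}_A$ coincides (up to sign) with $R_{A\bar BC}{}^D|_{\cF}$. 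This captures the hint: the ambient commutator $[D_A,D_{\bar B}]$ acting on homogeneous degree-$(0,0)$ vectors and the tractor commutator acting on sections of $\mathcal{E}_A(0,0)$ agree under the dictionary, so in particular their curvature parts agree.

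Next, I will verify using \eqref{ids}, \eqref{eqn:nablanablabar}, and \eqref{eqn:ambient_metric_ricci} the algebraic properties of $R_{A\bar BC\bar E}|_{\cF}$. Applying $[\nabla_A,\nabla_{\bar B}]Z^C = 0$ (which follows from $\nabla_A Z^B = \delta_A{}^B$ and $\nabla_{\bar A}Z^B = 0$) gives $R_{A\bar BC\bar D}|_{\cF}Z^D = 0$; the K\"ahler symmetry $R_{A\bar BC\bar E} = R_{C\bar BA\bar E}$ together with conjugation then yields $Z^AR_{A\bar BC\bar E}|_{\cF} = Z^{\bar B}R_{A\bar BC\bar E}|_{\cF} = Z^CR_{A\bar BC\bar E}|_{\cF} = Z^{\bar E}R_{A\bar BC\bar E}|_{\cF} = 0$. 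Ricci-flatness $\mathrm{Ric}(\tilde g) = O(\rho^n)$ of the ambient metric (valid since $n\geq 2$) yields the Hermitian trace-freeness $h^{A\bar B}R_{A\bar BC\bar E}|_{\cF} = 0$. Together with the K\"ahler--Weyl symmetries inherited from $\MA$, these are precisely the algebraic properties of the tractors in the image of the splitting operator $M^{\alpha\bar\beta}_{A\bar B}$ of Proposition~\ref{prop:cr_weyl}, thought of as acting in the $A\bar B$ slot.

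Finally, I will match $R_{A\bar BC\bar E}|_{\cF}$ with $S_{A\bar BC\bar E}$ slot-by-slot in a pseudohermitian scale $\theta$. The projecting ($W_A{}^\alpha W_{\bar B}{}^{\bar\beta} W_C{}^\gamma W_{\bar E}{}^{\bar\sigma}$) slot of $R_{A\bar BC\bar E}|_{\cF}$ is the Chern tensor $S_{\alpha\bar\beta\gamma\bar\sigma}$, a standard identification from the Fefferman ambient metric expansion. The lower-slot components are determined by the approximate Ricci-flatness of $\tilde g$ and its derivatives, and a direct calculation shows that they agree with the lower-slot components of $\tfrac{1}{n-1}M^{\alpha\bar\beta}_{A\bar B}(\Omega_{\alpha\bar\beta C\bar E}) = \tfrac{1}{n-1}S_{A\bar BC\bar E}$. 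The overall factor $(n-1)$ arises because $M^{\alpha\bar\beta}_{A\bar B}$ at weight $(0,0)$ produces a leading coefficient $(n-1)^2$, which the definition of $S_{A\bar BC\bar E}$ compensates for by dividing by $(n-1)$.

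The main technical obstacle is this final slot-by-slot matching. The explicit ambient-metric expansion required to extract the lower-slot components of $R_{A\bar BC\bar E}|_{\cF}$ in terms of the Chern tensor $S_{\alpha\bar\beta\gamma\bar\sigma}$, the tensor $V_{\alpha\bar\beta\gamma}$, and other pseudohermitian curvature data is intricate; however, the underlying calculations are standard in the Fefferman ambient metric literature (cf.\ \cite{FeffermanGraham2012,HirachiMarugameMatsumoto2015}).
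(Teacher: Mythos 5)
You have the right starting point --- the observation that, under the tractor/ambient dictionary, the ambient commutator $[D_A,D_{\bar B}]$ on homogeneous degree-$(0,0)$ vector fields and the tractor commutator $[\bD_A,\bD_{\bar B}]$ on $\mE^C(0,0)$ must agree --- but this is exactly the idea you then abandon, and the route you substitute for it leaves the essential content of the lemma unproven. The paper's proof consists of computing \emph{both} commutators explicitly: the ambient one via Proposition~\ref{prop:general_formula}, giving $n(n-1)R_{A\bar BE}{}^C\cV^E$ plus terms of the form $Z_A(\cdots)$ and $Z_{\bar B}(\cdots)$, and the tractor one via the definition of $\bD_A$ together with Proposition~\ref{prop:cr_weyl}, giving $nS_{A\bar BE}{}^CV^E$ plus analogous terms with coefficient $\tfrac{n}{n-1}$. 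Because the lower-order terms also contain the curvatures, one cannot simply ``read off the curvature parts''; the paper isolates them by multiplying by $Z_GZ_{\bar H}$ and skewing over $(A,G)$ and $(\bar B,\bar H)$, then peeling off the $Z$-terms one at a time. This yields $(n-1)R_{A\bar BC\bar E}=S_{A\bar BC\bar E}$ with no ambient metric expansion whatsoever.

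By contrast, your plan hinges on a ``slot-by-slot matching in a pseudohermitian scale,'' i.e.\ on computing every component of $R_{A\bar BC\bar E}\rv_{\cF}$ (including the slots carrying $V_{\alpha\bar\beta\gamma}$, $Q_{\alpha\gamma}$, $U_{\alpha\bar\beta}$, $Y_\alpha$ and $\mO$) from the Fefferman ambient metric expansion. That computation is not carried out, is not in the cited literature in the form you need, and is precisely what you flag as ``the main technical obstacle'' --- but it \emph{is} the lemma. In particular the normalisation $(n-1)$, which is the entire assertion being proved, is justified only by a heuristic about the leading coefficient of the splitting operator $M^{\alpha\bar\beta}_{A\bar B}$; matching leading coefficients of the projecting slot says nothing about the lower slots unless you supply a uniqueness argument (e.g.\ that a tractor with Weyl-type symmetries, vanishing $Z$-contractions and vanishing Hermitian trace is determined by its projecting part), which you do not. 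The algebraic properties you verify in your second paragraph ($Z$-annihilation, trace-freeness) are correct and appear in the paper as equation displays after~\eqref{wts}, but they are preparatory, not decisive. To repair the argument, return to your first paragraph and actually compute the two commutators; the comparison does the rest.
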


\begin{proof}
 Let $V^C\in\mE^C(0,0)$ and let $\cV^C\in\tilde\mE^C$ be an extension of $V^C$ which is homogeneous of degree zero.  It follows from Proposition~\ref{prop:general_formula} and properties of the ambient connection that
 \begin{multline}
  \label{eqn:weyl_ambient_commutator}
  [D_A,D_{\bar B}]\cV^C = n(n-1)R_{A\bar BE}{}^C\cV^E \\ - nZ_AR_{E\bar BF}{}^CW^{E\bar\beta}\nabla_{\bar\beta}\cV^F - nZ_{\bar B}R_{A\bar EF}{}^CW^{\bar E\alpha}\nabla_\alpha\cV^F .
 \end{multline}
 Since all the operators in~\eqref{eqn:weyl_ambient_commutator} are tangential, we can restrict to $M$ and regard this as a tractor formula for $V^C$.  On the other hand, a straightforward computation using the definition of the tractor $D$-operator, the tractor curvature, and Proposition~\ref{prop:cr_weyl} yields
 \begin{multline}
  \label{eqn:weyl_tractor_commutator}
  [\bD_A,\bD_{\bar B}]V^C = nS_{A\bar BE}{}^CV^E \\ - \frac{n}{n-1}Z_AS_{E\bar BF}{}^CW^{E\bar\beta}\nabla_{\bar\beta} V^F - \frac{n}{n-1}Z_{\bar B}S_{A\bar EF}{}^CW^{\bar E\alpha}\nabla_\alpha V^F .
 \end{multline}
 Multiplying both~\eqref{eqn:weyl_ambient_commutator} and~\eqref{eqn:weyl_tractor_commutator} by $Z_GZ_{\bar H}$ and skewing over the pairs $(A,G)$ and $(\bar B,\bar H)$ yields $(n-1)Z_{[G}Z_{\bar H}R_{A\bar B]E\bar C}=Z_{[G}Z_{\bar H}S_{A\bar B]E\bar C}$.  Contracting $W^{G\bar\beta}$ into this yields $(n-1)Z_AZ_{[\bar H}R_{G\bar B]E\bar C}W^{G\bar\beta}=Z_AZ_{[\bar H}S_{G\bar B]E\bar C}W^{G\bar\beta}$, where our notation means skew over the pair $(\bar B,\bar H)$.  Now multiplying both~\eqref{eqn:weyl_ambient_commutator} and~\eqref{eqn:weyl_tractor_commutator} by $Z_{\bar H}$ and skewing over the pair $(\bar B,\bar H)$ yields $(n-1)Z_{[\bar H}R_{A\bar B]E\bar C}=Z_{[\bar H}S_{A\bar B]E\bar C}$.  Contracting with $W^{\bar H\gamma}$ yields $(n-1)Z_{\bar B}R_{A\bar HE\bar C}W^{\bar H\gamma}=Z_{\bar B}S_{A\bar HE\bar C}W^{\bar H\gamma}$.  Using this and its conjugate to compare~\eqref{eqn:weyl_ambient_commutator} and~\eqref{eqn:weyl_tractor_commutator} yields the desired result.
\end{proof}

Considering Lemma~\ref{lem:curv_amb_to_tractor} in the case of dimension five and using~\eqref{eqn:dim5_p_formula} yields the following tractor formulae for the sixth-order CR GJMS operator, the $P^\prime$-operator, and the $Q^\prime$-curvature.

\begin{prop}
 \label{prop:dim5_tractor}
 Let $(M^5,H)$ be an embeddable five-dimensional CR manifold.  Then
 \begin{equation}
  \label{eqn:dim5_tractor_p}
  \frac{1}{8}Z_{\bar C}Z_AZ_{\bar B}P_6f = \bD_{\bar C}\bD_A\bD_{\bar B}f - Z_{\bar C}S_{A\bar BE}{}^F\bD_F\bD^E f
 \end{equation}
 for all $f\in\mE(0,0)$.  Moreover, given a choice of contact form $\theta=(\sigma\bar\sigma)^{-1}\btheta$, it holds that
 \begin{align}
  \label{eqn:dim5_tractor_pprime} \frac{1}{8}Z_{\bar C}Z_AZ_{\bar B}P^\prime u & = \bD_{\bar C}K_{A\bar B}(u) - Z_{\bar C}S_{A\bar BE}{}^FK_F{}^E(u) , \\
  \label{eqn:dim5_tractor_qprime} \frac{1}{8}Z_{\bar C}Z_AZ_{\bar B}Q^\prime & = \bD_{\bar C}I_{A\bar B} - Z_{\bar C}S_{A\bar BE}{}^FI_F{}^E
 \end{align}
 for all $u\in\mP$, where $K_{A\bar B}(u)$ and $I_{A\bar B}$ are as in Lemma~\ref{lem:dbard_logpluri} and Lemma~\ref{lem:dbard_log2}, respectively, and we require that $\theta$ is pseudo-Einstein in~\eqref{eqn:dim5_tractor_qprime}.
\end{prop}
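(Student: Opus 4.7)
The proof is a direct application of the ambient identity~\eqref{eqn:dim5_p_formula} --- the $n=2$, $k=1$ case of Corollary~\ref{cor:general_formula} --- combined with Lemma~\ref{lem:curv_amb_to_tractor} (which in dimension five gives $S_{A\bar BC\bar E} = R_{A\bar BC\bar E}$ because $n-1=1$), the ambient-to-tractor dictionary of Lemma~\ref{logdec}, and the log density definitions of $K_{A\bar B}$ and $I_{A\bar B}$ from Section~\ref{sec:pluri}.  The first identity~\eqref{eqn:dim5_tractor_p} is immediate: for $f\in\mE(0,0)$ the ambient definition~\eqref{eqn:crgjms_defn} yields $P_6 f = -8\Delta^3\tilde f|_{\mF}$ for any homogeneous extension $\tilde f$, and substituting $\tilde f$ into~\eqref{eqn:dim5_p_formula} and restricting to $\mF$ produces the claimed tractor formula after replacing $R_{A\bar BE}{}^F$ by $S_{A\bar BE}{}^F$.

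To prove~\eqref{eqn:dim5_tractor_pprime} the plan is to apply the ambient identity to $\tilde u\log h_\theta$, which enters the definition~\eqref{eqn:ambient_pprime} via $P^\prime u = 8\Delta^3(\tilde u\log h_\theta)|_{\mF}$.  Since $u\in\mP$, Lemma~\ref{lem:dbard} gives $\bD_A\bD_{\bar B}u=0$, so by the Leibniz rule and tangentiality $D_AD_{\bar B}(\tilde u\log h_\theta)|_{\mF}$ reduces to $[D_AD_{\bar B},\log h_\theta]\tilde u|_{\mF}$.  Lemma~\ref{logdec} identifies this with the tractor quantity $[\bD_A\bD_{\bar B},\log\sigma\bar\sigma]u=\bD_A\bD_{\bar B}(u\log\sigma\bar\sigma)=-K_{A\bar B}(u)$, and similarly $D_FD^E(\tilde u\log h_\theta)|_{\mF}\leftrightarrow -K_F{}^E(u)$.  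The outer $D_{\bar C}$ is tangential, so its restriction to $\mF$ corresponds to $\bD_{\bar C}$ acting on $-K_{A\bar B}(u)$; collecting signs then produces~\eqref{eqn:dim5_tractor_pprime}.

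For~\eqref{eqn:dim5_tractor_qprime} the argument is analogous but uses a double log.  The pseudo-Einstein hypothesis is exactly $K_{A\bar B}(1)=0$, equivalently $\bD_A\bD_{\bar B}(\log\sigma\bar\sigma)=0$; under Lemma~\ref{logdec} this corresponds to $D_AD_{\bar B}(\log h_\theta)|_{\mF}=0$.  Combined with $D_AD_{\bar B}(1)=0$, the elementary operator-level identity
\[ D_AD_{\bar B}(L^2)|_{\mF} = \bigl[[D_AD_{\bar B},L],L\bigr](1)|_{\mF} \quad (L=\log h_\theta) \]
allows the second statement of Lemma~\ref{logdec} to identify $\tfrac{1}{2}D_AD_{\bar B}(\log h_\theta)^2|_{\mF}$ with $\tfrac{1}{2}[[\bD_A\bD_{\bar B},\log\sigma\bar\sigma],\log\sigma\bar\sigma](1)=I_{A\bar B}$ by~\eqref{eqn:I_AB} and~\eqref{eqn:I_AB_commutator}.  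The same analysis produces $I_F{}^E$ in the curvature term.  Applying $D_{\bar C}$ tangentially and translating one last time yields~\eqref{eqn:dim5_tractor_qprime}, after using $Q^\prime=-4\Delta^3(\log h_\theta)^2|_{\mF}$.

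The main technical point requiring care, rather than routine computation, is the ambient-to-tractor translation of the composite operators $D_{\bar C}D_AD_{\bar B}$ and $D_FD^E$ when they act on expressions built from $\log h_\theta$; the interaction of the outer $D_{\bar C}$ with the restriction to $\mF$ and with the inner commutators is precisely what Lemma~\ref{logdec} is designed to handle, and that lemma must be invoked at each level to justify the correspondences used above.
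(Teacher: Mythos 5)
Your proposal is correct and reconstructs exactly the argument the paper intends: the paper justifies Proposition~\ref{prop:dim5_tractor} in a single line from the ambient identity~\eqref{eqn:dim5_p_formula} together with Lemma~\ref{lem:curv_amb_to_tractor} (so that $S_{A\bar BC\bar E}=R_{A\bar BC\bar E}$ when $n=2$), and your expansion---applying that identity to $\tilde f$, to $\tilde u\log h_\theta$, and to $(\log h_\theta)^2$, then using Lemma~\ref{logdec} and the commutator formulations~\eqref{eqn:I_ABbar_commutator} and~\eqref{eqn:I_AB_commutator} to identify the restrictions with $\bD_A\bD_{\bar B}f$, $-K_{A\bar B}(u)$, and $2I_{A\bar B}$---is precisely that route. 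The normalisations ($P_6=-8\Delta^3$, $P^\prime u=8\Delta^3(\tilde u\log h_\theta)\rv_{\mF}$, $Q^\prime=-4\Delta^3(\log h_\theta)^2\rv_{\mF}$) and the resulting signs all check out against~\eqref{eqn:dim5_tractor_p}--\eqref{eqn:dim5_tractor_qprime}.
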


Using Proposition~\ref{prop:cr_weyl} and the formula for the tractor $D$-operator, one could derive local formulae for the sixth-order CR GJMS operator $P_6$ in general dimensions as well as the $P^\prime$-operator and the $Q^\prime$-curvature for five-dimensional CR manifolds.  Here we derive local formulae for the $P^\prime$-operator and $Q^\prime$-curvature of a pseudo-Einstein five-manifold.  First, we consider the $Q^\prime$-curvature.

\begin{cor}
 \label{cor:dim5_qprime}
 Let $(M^5,H,\theta)$ be a pseudo-Einstein manifold.  Then
 \begin{multline}
  \label{eqn:dim5/dim5_qprime}
  \frac{1}{8}Q^\prime = \frac{1}{2}\Delta_b^2 P + \frac{1}{2}\Delta_b\lv A_{\alpha\beta}\rv^2 - 2\Imaginary\nabla^\gamma\left(A_{\beta\gamma}\nabla^\beta P\right) \\ - 2\Delta_b P^2 - 4P\lv A_{\alpha\beta}\rv^2 + 4P^3 - 2\mO .
 \end{multline}
 In particular, the total $Q^\prime$-curvature of a compact pseudo-Einstein five-manifold is
 \begin{equation}
  \label{eqn:dim5_total_qprime}
  \int_{M^5} Q^\prime = 16\int_{M^5} \left( 2P^3 - 2P\lv A_{\alpha\beta}\rv^2 - S_{\alpha\bar\beta\gamma\bar\delta}A^{\alpha\gamma}A^{\bar\beta\bar\delta} - \lv V_{\alpha\bar\beta\gamma}\rv^2 \right) .
 \end{equation}
\end{cor}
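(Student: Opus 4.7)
The plan is to apply Proposition~\ref{prop:dim5_tractor}, specifically the identity
\[
\tfrac{1}{8}Z_{\bar C}Z_AZ_{\bar B}Q^\prime \;=\; \bD_{\bar C}I_{A\bar B} - Z_{\bar C}S_{A\bar BE}{}^F I_F{}^E,
\]
and to extract the scalar $Q^\prime$ by reading off the $Z_{\bar C}Z_A Z_{\bar B}$-coefficient of the right-hand side in a chosen pseudo-Einstein contact form $\theta$. A key observation is that $I_{A\bar B}$ has weight $(-1,-1)$, and in dimension five the normalisation factor $n+w+w^\prime$ vanishes. Consequently the top and middle components of $\bD_{\bar C}I_{A\bar B}$ disappear, and $\bD_{\bar C}I_{A\bar B} = Z_{\bar C}\,M_{A\bar B}$ for an explicit second-order differential operator $M_{A\bar B}$ (a combination of $\nabla^{\bar\beta}\nabla_{\bar\beta}$, $\nabla_0$, and multiplication by $P$) applied to $I_{A\bar B}$. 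The identity therefore reduces to an equation in the $A,\bar B$ slots, and $Q^\prime/8$ is the ``bottom-bottom'' entry of the resulting tractor expression.

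To extract this scalar, I would substitute the matrix form of $I_A{}^B$ from Lemma~\ref{lem:dbard_log2} with $n=2$, differentiate its entries using the tractor connection formulae~\eqref{tracconn} together with the pseudo-Einstein simplifications $P_{(\alpha\bar\beta)_0}=0$ and $T_\alpha = -\tfrac{1}{2}\nabla_\alpha P$, and read off the bottom-bottom component. This produces precisely the non-Weyl contributions $\tfrac{1}{2}\Delta_b^2 P + \tfrac{1}{2}\Delta_b\lvert A_{\alpha\beta}\rvert^2 - 2\Imaginary\nabla^\gamma(A_{\beta\gamma}\nabla^\beta P) - 2\Delta_b P^2 - 4P\lvert A_{\alpha\beta}\rvert^2 + 4P^3$ of~\eqref{eqn:dim5/dim5_qprime}. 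For the Weyl-tractor term $-Z_{\bar C}S_{A\bar BE}{}^F I_F{}^E$, the relevant coefficient is computed by pairing the $Z_A Z_{\bar B}$-component of the CR Weyl tractor (which by Proposition~\ref{prop:cr_weyl} is $\Psi_{A\bar B}$, with bottom-bottom entry $\mO$) against the top-top slot $n(n-1)=2$ of $I_F{}^E$; the pseudo-Einstein vanishing $P^{\alpha\bar\beta}U_{\alpha\bar\beta}=0$ kills the middle contributions, and the result is the $-2\mO$ term, completing~\eqref{eqn:dim5/dim5_qprime}.

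To deduce the total-integral formula~\eqref{eqn:dim5_total_qprime}, I would integrate~\eqref{eqn:dim5/dim5_qprime} over compact $M^5$. The sub-Laplacian divergences $\Delta_b^2 P$, $\Delta_b\lvert A\rvert^2$, and $\Delta_b P^2$ drop out. On pseudo-Einstein 5-manifolds $\mO$ simplifies to $-i\nabla^\gamma Y_\gamma + A^{\alpha\gamma}Q_{\alpha\gamma}$. Rewriting $Q_{\alpha\gamma} = S_{\alpha\bar\beta\gamma\bar\sigma}A^{\bar\beta\bar\sigma} - \nabla^{\bar\beta}V_{\alpha\bar\beta\gamma}$ via~\eqref{eqn:divV} at $n=2$ and integrating the resulting $A^{\alpha\gamma}\nabla^{\bar\beta}V_{\alpha\bar\beta\gamma}$ by parts yields, after using the symmetry $V_{\alpha\bar\beta\gamma}=V_{\gamma\bar\beta\alpha}$ and the definition of $V_{\alpha\bar\beta\gamma}$ in terms of $A_{\alpha\gamma}$ and $P_{\alpha\bar\beta}$, both the term $-\lvert V_{\alpha\bar\beta\gamma}\rvert^2$ and cancellations that absorb the imaginary divergence $-2\Imaginary\nabla^\gamma(A_{\beta\gamma}\nabla^\beta P)$. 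The real part of $\nabla^\gamma Y_\gamma$ is then handled by~\eqref{eqn:RedivY}, and what remains are the contractions $P^3$, $P\lvert A\rvert^2$, $S_{\alpha\bar\beta\gamma\bar\delta}A^{\alpha\gamma}A^{\bar\beta\bar\delta}$, and $\lvert V_{\alpha\bar\beta\gamma}\rvert^2$ in the combinations given in~\eqref{eqn:dim5_total_qprime}.

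The principal technical obstacle is handling the imaginary divergence $\Imaginary\nabla^\gamma Y_\gamma$, which, unlike the real part (pinned down modulo $A$-$Q$ contractions by~\eqref{eqn:RedivY}), is not automatically a boundary term for a real vector field and so contributes to the integral. Its systematic reduction requires expanding $Y_\gamma = \nabla_0 T_\gamma - i\nabla_\gamma S + 2iP_\gamma{}^\rho T_\rho - 3A_{\gamma\rho}T^\rho$, applying repeatedly the divergence identities in Lemma~\ref{lem:div_identities} and the pseudo-Einstein relation $T_\alpha = -\tfrac{1}{2}\nabla_\alpha P$, and checking that the various surviving boundary divergences cancel pairwise so that only the two invariant contractions $\lvert V_{\alpha\bar\beta\gamma}\rvert^2$ and $S_{\alpha\bar\beta\gamma\bar\delta}A^{\alpha\gamma}A^{\bar\beta\bar\delta}$ remain with the claimed coefficients.
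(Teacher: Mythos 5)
Your strategy for the local formula is the same as the paper's (the paper's proof of this corollary simply invokes Propositions~\ref{prop:cr_weyl} and~\ref{prop:dim5_tractor} and calls the rest a straightforward computation), and your elaboration of it is sound: since $I_{A\bar B}$ has weight $(-1,-1)$ and $n+w+w^\prime=0$ when $n=2$, the operator $\bD_{\bar C}$ collapses to $Z_{\bar C}$ times its bottom slot, and the Weyl term contributes $-n(n-1)\mO=-2\mO$ because the $Z_AZ_{\bar B}$-part of $S_{A\bar BE\bar F}$ is $\Psi_{E\bar F}$, whose off-diagonal $Y$-entries pair against vanishing entries of $I_F{}^E$ and whose $U$-entry pairs against the pure-trace middle block $Ph_\alpha{}^\beta$.

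There is, however, a genuine error in your treatment of the integral formula, namely the claim that $\Imaginary\nabla^\gamma Y_\gamma$ ``is not automatically a boundary term for a real vector field and so contributes to the integral.'' It is a boundary term: for any $(1,0)$-form $\omega_\gamma$ one has $2\Imaginary\nabla^\gamma\omega_\gamma = \nabla^\gamma(-i\omega_\gamma)+\nabla^{\bar\gamma}\bigl(\overline{-i\omega_\gamma}\bigr)$, the divergence of a real vector field, so $\int_M \Imaginary\nabla^\gamma\omega_\gamma=0$ just as $\int_M\Real\nabla^\gamma\omega_\gamma=0$ (equivalently, $\int_M\nabla^\gamma\omega_\gamma=0$ for \emph{every} complex $\omega_\gamma$, by applying the real divergence theorem to $\omega$ and to $i\omega$). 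Consequently $\int_M(-i\nabla^\gamma Y_\gamma)=0$ outright, the term $-2\Imaginary\nabla^\gamma(A_{\beta\gamma}\nabla^\beta P)$ drops without needing to be ``absorbed'' by cancellations from the $V$-term, and your entire final paragraph --- the proposed expansion of $Y_\gamma$ and the pairwise cancellation check --- addresses a non-existent obstacle. The identity~\eqref{eqn:RedivY} is not needed for the integration at all (its role is to show $\mO$ is real). Once this is corrected, the integral step reduces to exactly what the paper does: $\int\mO=\int A^{\alpha\gamma}Q_{\alpha\gamma}$, then~\eqref{eqn:divV} at $n=2$ and one integration by parts (using that $V_{\alpha\bar\beta\gamma}$ is trace-free, so $\nabla^{\bar\beta}A^{\alpha\gamma}V_{\alpha\bar\beta\gamma}=\lv V\rv^2$) give $\int\mO=\int\bigl(S_{\alpha\bar\beta\gamma\bar\delta}A^{\alpha\gamma}A^{\bar\beta\bar\delta}+\lv V_{\alpha\bar\beta\gamma}\rv^2\bigr)$, from which~\eqref{eqn:dim5_total_qprime} follows. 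As written, your misidentification of which terms survive integration is the kind of error that could corrupt the coefficients in~\eqref{eqn:dim5_total_qprime} if the cancellation bookkeeping were actually attempted.
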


\begin{proof}
 The local formula~\eqref{eqn:dim5/dim5_qprime} follows from a straightforward computation using Proposition~\ref{prop:cr_weyl} and Proposition~\ref{prop:dim5_tractor}.  Lemma~\ref{lem:div_identities} and the definition of $\mO$ imply that
 \[ \int_{M^5} \mO = \int_{M^5} \left( S_{\alpha\bar\beta\gamma\bar\delta}A^{\alpha\gamma}A^{\bar\beta\bar\delta} + \lv V_{\alpha\bar\beta\gamma}\rv^2\right) , \]
 from which~\eqref{eqn:dim5_total_qprime} readily follows.
\end{proof}

Second, we consider the $P^\prime$-operator on pseudo-Einstein manifolds.  Note that while the formula below can be derived from Proposition~\ref{prop:cr_weyl} and Proposition~\ref{prop:dim5_tractor}, the derivation is simplified using~\eqref{eqn:dim5/dim5_qprime} and the transformation formula~\eqref{eqn:qprime_trans_intro} for the $Q^\prime$-curvature.

\begin{cor}
 \label{cor:dim5_pprime}
 Let $(M^5,H,\theta)$ be a pseudo-Einstein manifold.  Then
 \begin{align*}
  P^\prime\Upsilon & = -2\Delta_b^3\Upsilon + 24\Real\Delta_b\nabla^\gamma\left(P\nabla_\gamma\Upsilon\right) + 24\Real\nabla^\gamma\left(P\nabla_\gamma\Delta_b\Upsilon\right) \\
   & \quad + 8\Imaginary\Delta_b\nabla^\gamma\left(A_{\beta\gamma}\nabla^\beta\Upsilon\right) + 8\Imaginary\nabla^\gamma\left(A_{\beta\gamma}\nabla^\beta\Delta_b\Upsilon\right) - 16\Real\nabla^\beta\nabla^\gamma\left(P\nabla_\gamma\nabla_\beta\Upsilon\right) \\
   & \quad - 16\Real\nabla^\gamma\left[\left( 2U_\gamma{}^\beta + 2A_{\gamma\mu}A^{\mu\beta} + \left(\Delta_bP - 4P^2 - \lv A_{\alpha\delta}\rv^2\right)h_\gamma{}^\beta\right)\nabla_\beta\Upsilon\right] \\
   & \quad - 64\Imaginary\nabla^\gamma\left(PA_{\beta\gamma}\nabla^\beta\Upsilon\right) .
 \end{align*}
 for all $\Upsilon\in\mP$.
\end{cor}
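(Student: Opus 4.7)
The strategy is to linearize the local formula~\eqref{eqn:dim5/dim5_qprime} for the $Q^\prime$-curvature. Since $\theta$ is pseudo-Einstein and $\Upsilon\in\mathcal{P}$, so that $t\Upsilon\in\mathcal{P}$ for every $t\in\mathbb{R}$, the family $\hat\theta_t:=e^{t\Upsilon}\theta$ consists entirely of pseudo-Einstein contact forms. Thus $\hat Q^\prime_t$ is well-defined along this family, and~\eqref{eqn:qprime_trans_intro} with $n=2$ gives
\begin{equation}\label{pplan:1}
e^{3t\Upsilon}\hat Q^\prime_t \;=\; Q^\prime + t\,P^\prime\Upsilon + \tfrac{t^2}{2}P_6(\Upsilon^2).
\end{equation}
Differentiating~\eqref{pplan:1} in $t$ at $t=0$ yields
\begin{equation}\label{pplan:2}
P^\prime\Upsilon \;=\; \left.\frac{d}{dt}\right|_{t=0}\hat Q^\prime_t \;+\; 3\Upsilon\,Q^\prime,
\end{equation}
so that the computation of $P^\prime\Upsilon$ reduces to the first variation of $Q^\prime$ along the family $\hat\theta_t$, with~\eqref{eqn:dim5/dim5_qprime} serving as the explicit expression for $Q^\prime$ at each $t$.

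First I would assemble the first-order transformation laws under $\hat\theta_t=e^{t\Upsilon}\theta$ for the pseudohermitian objects entering~\eqref{eqn:dim5/dim5_qprime}: the Levi form $\hat h_{\alpha\bar\beta}$, the Tanaka--Webster connection $\hat\nabla$, the sublaplacian $\hat\Delta_b$, the torsion $\hat A_{\alpha\beta}$, the scalar curvature $\hat P$, and the invariant $\hat{\mathcal O}$. The relevant formulae are Lee's transformation laws~\cite{Lee1988} (see also~\cite{GoverGraham2005}), which simplify substantially here because $\Upsilon\in\mathcal{P}$ forces $B_{\alpha\bar\beta}(\Upsilon)=0$ and $P_\alpha(\Upsilon)=0$ by Lemma~\ref{lem:dbard}, while the pseudo-Einstein condition on $\theta$ gives $P_{(\alpha\bar\beta)_0}=0$ and $\nabla_\alpha P+2T_\alpha=0$. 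Under these hypotheses, $\nabla_\alpha\nabla_{\bar\beta}\Upsilon$ reduces to $\tfrac{1}{2}(\Delta_b\Upsilon)\,h_{\alpha\bar\beta}$, and many would-be contributions drop out so that the resulting linearizations are expressible in terms of derivatives of $\Upsilon$ paired with the pseudohermitian data of $\theta$.

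Next I would substitute these linearizations into each of the seven summands of~\eqref{eqn:dim5/dim5_qprime} and differentiate at $t=0$ to obtain $\left.\tfrac{d}{dt}\right|_{t=0}\hat Q^\prime_t$. The main obstacle is the last summand: the linearization of
\[
\hat{\mathcal O}=-i\hat\nabla^\gamma\hat Y_\gamma+2\hat P^{\alpha\bar\beta}\hat U_{\alpha\bar\beta}+\hat A^{\alpha\gamma}\hat Q_{\alpha\gamma}
\]
requires transforming the third-order tensor $Y_\gamma$, which is algebraically unwieldy. To bypass this, I would use~\eqref{eqn:divU}, \eqref{eqn:divQ}, and~\eqref{eqn:RedivY} from Lemma~\ref{lem:div_identities} to rewrite $\mathcal O$ modulo divergences in terms of $U_{\alpha\bar\beta}$, $V_{\alpha\bar\beta\gamma}$, $S_{\alpha\bar\beta\gamma\bar\sigma}$, and first-order derivatives of $P$ and $A$, whose conformal behaviour is much more tractable.

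Finally, combining $\left.\tfrac{d}{dt}\right|_{t=0}\hat Q^\prime_t$ with the $3\Upsilon\,Q^\prime$ contribution from~\eqref{pplan:2}, I would expand the divergences via Leibniz: the undifferentiated-$\Upsilon$ pieces of $3\Upsilon\,Q^\prime$ cancel against the corresponding pieces produced by the Leibniz expansion of $\dot Q^\prime_t(0)$, leaving only terms in which at least one derivative falls on $\Upsilon$. Reorganising these surviving terms into divergence form and exploiting the pluriharmonic and pseudo-Einstein identities one more time to collapse $\nabla_\alpha\nabla_{\bar\beta}\Upsilon$ and $\nabla^\gamma(\nabla_\gamma P+2T_\gamma)$ reassembles the answer into the local expression stated in the Corollary. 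The bulk of the effort lies in Steps two and three, whereas the conceptual framework~\eqref{pplan:1}--\eqref{pplan:2} is entirely formal.
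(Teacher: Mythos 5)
Your proposal follows essentially the same route as the paper's proof: the paper likewise obtains $P'\Upsilon$ as $\left.\partial_t\right|_{t=0}e^{3t\Upsilon}\hat{Q}'_t$ along the pseudo-Einstein family $\hat{\theta}_t=e^{t\Upsilon}\theta$ via~\eqref{eqn:qprime_trans_intro}, linearises the local formula~\eqref{eqn:dim5/dim5_qprime} term by term using Lee's transformation laws (including the clean identity $\left.\partial_t\right|_{t=0}e^{2t\Upsilon}\hat{\mathcal{O}}=-4\Imaginary Y^\gamma\nabla_\gamma\Upsilon$, so the variation of $\mathcal{O}$ is handled directly rather than by first rewriting it modulo divergences), and then simplifies using the consequences of $\Upsilon\in\mathcal{P}$ and Lemma~\ref{lem:div_identities} recorded in~\eqref{eqn:mP_id1} and~\eqref{eqn:mP_id2}. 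One small correction to your sketch: for $\Upsilon\in\mathcal{P}$ and $n=2$, Lemma~\ref{lem:dbard} gives $\nabla_\alpha\nabla_{\bar\beta}\Upsilon=\tfrac12\left(\nabla_\gamma\nabla^\gamma\Upsilon\right)h_{\alpha\bar\beta}$ rather than $\tfrac12\left(\Delta_b\Upsilon\right)h_{\alpha\bar\beta}$; the two differ by the $\nabla_0\Upsilon$ commutator term, which must be retained in the computation.
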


\begin{remark}
 Note that this formula for the $P^\prime$-operator is manifestly formally self-adjoint.  In particular, Corollary~\ref{cor:dim5_pprime} and the transformation formula~\eqref{eqn:qprime_trans_intro} give an intrinsic proof of the fact that the total $Q^\prime$-curvature is a global \invariant/ in dimension five.
\end{remark}

\begin{proof}
 Since $\Upsilon\in\mP$ and $U_{\gamma\bar\beta}+A_{\gamma\mu}A^\mu{}_{\bar\beta}-\frac{1}{2}\lv A_{\alpha\bar\sigma}\rv^2h_{\gamma\bar\beta}$ is trace-free, Lemma~\ref{lem:div_identities} implies that
 \begin{multline}
  \label{eqn:mP_id1}
  \nabla^\gamma\left(\left(U_\gamma{}^\beta + A_{\gamma\mu}A^{\mu\beta} - \frac{1}{2}\lv A_{\alpha\delta}\rv^2h_\gamma{}^\beta\right)\nabla_\beta\Upsilon\right) \\ = \left(iY^\gamma - iA^{\beta\gamma}\nabla_\beta P + \frac{1}{2}\nabla^\gamma\lv A_{\alpha\delta}\rv^2\right)\nabla_\gamma\Upsilon .
 \end{multline}
 Using the commutator identities~\cite[Lemma~2.2]{Lee1988} and the assumption $\Upsilon\in\mP$, we compute that
 \begin{equation}
  \label{eqn:mP_id2}
  \nabla^\gamma\nabla_\gamma\nabla_\beta\Upsilon = \frac{3}{2}\nabla_\beta\Delta_b\Upsilon + 3iA_{\beta\gamma}\nabla^\gamma\Upsilon + 3P\nabla_\beta\Upsilon .
 \end{equation}

 Consider now the family $\htheta_t=e^{t\Upsilon}\theta$ of pseudo-Einstein contact forms.  In the following, we shall use hats to denote pseudohermitian invariants defined in terms of $\htheta_t$ and suppress the dependence on $t$ in our notation.  It follows from~\eqref{eqn:qprime_trans_intro} that
 \begin{equation}
  \label{eqn:pprime_formula}
  P^\prime(\Upsilon) = \left.\frac{\partial}{\partial t}\right|_{t=0} e^{3t\Upsilon}\hQ^\prime .
 \end{equation}
 The right-hand side of~\eqref{eqn:pprime_formula} is readily expanded using the identities
 \begin{align*}
  \left.\frac{\partial}{\partial t}\right|_{t=0}e^{t\Upsilon}\hP & = -\frac{1}{2}\Delta_b\Upsilon, \\
  \left.\frac{\partial}{\partial t}\right|_{t=0}\hA_{\alpha\beta} & = i\nabla_\alpha\nabla_\beta\Upsilon , \\
  \left.\frac{\partial}{\partial t}\right|_{t=0}e^{2t\Upsilon}\hmO & = -4\Imaginary Y^\gamma\nabla_\gamma\Upsilon , \\
  \left.\frac{\partial}{\partial t}\right|_{t=0}e^{(1-w)t\Upsilon}\nabla^\gamma\left(e^{wt\Upsilon}\omega_\gamma\right) & = (w+2)\omega_\gamma\nabla^\gamma\Upsilon
 \end{align*}
 for all $\omega_\gamma\in\mE_\gamma$ and all $w\in\bR$ (cf.\ \cite{GoverGraham2005,Lee1988}).  Using~\eqref{eqn:mP_id1} and~\eqref{eqn:mP_id2} to simplify the resulting expansion yields the desired formula.
\end{proof}

It is interesting to compare the total $Q^\prime$-curvature~\eqref{eqn:dim5_total_qprime} to the other known and interesting global \invariant/ in dimension five, namely the Burns--Epstein invariant~\cite{BurnsEpstein1990c}.  Marugame~\cite{Marugame2013} computed the Burns--Epstein invariant $\mu(M^5)$ of the boundary $M^5$ of a strictly pseudoconvex bounded domain $X\subset\bC^3$, showing that
\begin{equation}
 \label{eqn:marugame_burns_epstein}
 \mu(M^5) = -\frac{1}{16\pi^3}\int_{M^5} \left( 2P^3 - 2P\lv A_{\alpha\beta}\rv^2 - S_{\alpha\bar\beta\gamma\bar\delta}A^{\alpha\gamma}A^{\bar\beta\bar\delta} + \frac{1}{2}P\lv S_{\alpha\bar\beta\gamma\bar\sigma}\rv^2 \right) ,
\end{equation}
while the Burns--Epstein invariant is related to the Euler characteristic of $X$ via the formula
\begin{equation}
 \label{eqn:marugame_gauss_bonnet}
 \chi(X) = \int_X \left(c_3 - \frac{1}{2}c_1c_2 + \frac{1}{8}c_1^3\right) + \mu(M^5) .
\end{equation}
Indeed, one can regard the formula~\eqref{eqn:marugame_burns_epstein} as defining a global pseudohermitian invariant $\mu(M^5)$.  By realizing $M$ as the boundary of a complex manifold, Marugame gave an extrinsic proof that $\mu(M^5)$ is a global \invariant/~\cite{Marugame2013}.

Direct comparison of~\eqref{eqn:dim5_total_qprime} and~\eqref{eqn:marugame_burns_epstein} implies both Theorem~\ref{thm:marugame} and the fact that $\int\lv V_{\alpha\bar\beta\gamma}\rv^2+\frac{1}{2}P\lv S_{\alpha\bar\beta\gamma\bar\sigma}\rv^2$ is a \invariant/.  We here give an intrinsic proof of the latter fact under the additional assumption that $c_2(H^{1,0})$ vanishes in $H^4(M;\bR)$ by studying properties of the pseudohermitian invariant $\mI^\prime$.

\begin{prop}
 \label{prop:iprime}
 Let $(M^5,H,\theta)$ be a pseudohermitian manifold and define
 \begin{align}
  \label{eqn:iprime} \mI^\prime & = -\frac{1}{8}\Delta_b\left| S_{\alpha\bar\beta\gamma\bar\sigma}\right|^2 + \left| V_{\alpha\bar\beta\gamma}\right|^2 + \frac{1}{2}P\left| S_{\alpha\bar\beta\gamma\bar\sigma}\right|^2 , \\
  \label{eqn:Xalpha} X_\alpha & = -iS_{\alpha\bar\rho\gamma\bar\sigma}V^{\bar\rho\gamma\bar\sigma} + \frac{1}{4}\nabla_\alpha\left| S_{\gamma\bar\sigma\delta\bar\rho}\right|^2 .
 \end{align}
 Suppose $\htheta=e^\Upsilon\theta$.  Then
 \begin{equation}
  \label{eqn:Iprime_transformation}
  e^{3\Upsilon}\hmI^\prime = \mI^\prime + 2\Real X^\gamma\nabla_\gamma\Upsilon .
 \end{equation}
 Moreover, if $M$ is compact, $c_2(H^{1,0})$ vanishes in $H^4(M;\bR)$, and both $\theta$ and $\htheta$ are pseudo-Einstein, then
 \[ \int_M \hmI^\prime\,\htheta\wedge d\htheta \wedge d\htheta = \int_M \mI^\prime\,\theta\wedge d\theta \wedge d\theta . \]
\end{prop}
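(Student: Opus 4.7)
My plan is to prove the transformation formula by direct computation and then deduce the integral invariance via integration by parts.  For the transformation formula, I would compute $e^{3\Upsilon}\hmI^\prime$ term by term under $\htheta = e^\Upsilon\theta$.  Since the Chern tensor $S_{\alpha\bar\beta\gamma\bar\sigma}$ is CR invariant (in the appropriate density bundle), the norm squared $|S_{\alpha\bar\beta\gamma\bar\sigma}|^2$ is a genuine CR density of weight $(-2,-2)$, so $e^{2\Upsilon}\widehat{|S|^2} = |S|^2$; combined with the standard transformation of the Webster scalar curvature $P$, this gives a clean formula for $e^{3\Upsilon}\widehat{P|S|^2}$.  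The sublaplacian term $-\frac{1}{8}\Delta_b|S|^2$ transforms by the standard rule for $\Delta_b$ acting on densities of weight $(-2,-2)$.  The only non-CR-invariant ingredient is $V_{\alpha\bar\beta\gamma}$, whose transformation formula, readily obtained from its defining expression in terms of $T_\alpha$ and $P_{\alpha\bar\beta}$ (cf.\ \cite{Lee1988}), introduces $\nabla\Upsilon$-dependent corrections.  After collecting all contributions, the quadratic-and-higher-order terms in $\Upsilon$ should cancel via repeated application of Lemma~\ref{lem:div_identities} (especially \eqref{eqn:divS} and \eqref{eqn:divV}), leaving only the linear-in-$\nabla\Upsilon$ piece, which should telescope into $2\Real X^\gamma\nabla_\gamma\Upsilon$ as claimed.

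For the integral invariance, I would integrate the transformation formula over $M$ with respect to $\theta\wedge d\theta\wedge d\theta$.  Since $\htheta\wedge d\htheta\wedge d\htheta = e^{3\Upsilon}\theta\wedge d\theta\wedge d\theta$, the left-hand side contributes exactly $\int_M \hmI^\prime\,\htheta\wedge d\htheta\wedge d\htheta$, so the desired invariance reduces to
\[ \Real\int_M X^\gamma\nabla_\gamma\Upsilon\,\theta\wedge d\theta\wedge d\theta = 0. \]
Integration by parts transforms this into
\[ \Real\int_M (\nabla^\gamma X_\gamma)\,\Upsilon\,\theta\wedge d\theta\wedge d\theta = 0. \]
A direct computation of $\nabla^\gamma X_\gamma$, using Lemma~\ref{lem:div_identities} and the standard Bianchi-type identities, should then yield a decomposition
\[ \Real\nabla^\gamma X_\gamma = c\cdot [c_2(H^{1,0})] + \text{terms in }\mP^\perp, \]
where $[c_2(H^{1,0})]$ denotes the pseudohermitian representative of the second Chern form of the Tanaka--Webster connection on $H^{1,0}$ and $c$ is a universal constant.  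Since $\Upsilon\in\mP$ (both $\theta$ and $\htheta$ being pseudo-Einstein), the $\mP^\perp$ piece pairs trivially with $\Upsilon$; the $c_2$ piece pairs with $\Upsilon$ to give a multiple of the pairing of $c_2$ with a de Rham cohomology class, which vanishes by the hypothesis that $c_2(H^{1,0}) = 0$ in $H^4(M;\bR)$.

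The main obstacle will be the explicit identification of the $c_2(H^{1,0})$ contribution inside $\nabla^\gamma X_\gamma$.  Concretely, one would expand $\nabla^\gamma(S_{\gamma\bar\rho\delta\bar\sigma}V^{\bar\rho\delta\bar\sigma})$ and $\nabla^\gamma\nabla_\gamma|S|^2$ using Lemma~\ref{lem:div_identities}, then reorganize into a form matching a standard Chern--Weil representative of $c_2(H^{1,0})$ for the Tanaka--Webster connection.  A useful consistency check is that in the Einstein pseudohermitian case, where the torsion vanishes and $P_{\alpha\bar\beta}$ is pure trace, the resulting identity should be compatible with Theorem~\ref{thm:factorisation} and with Marugame's extrinsic computation~\cite{Marugame2013}.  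A secondary subtlety is the careful treatment of real parts and Hermitian symmetries throughout, since contractions such as $S_{\alpha\bar\rho\gamma\bar\sigma}V^{\bar\rho\gamma\bar\sigma}$ are a priori complex-valued and only their real parts enter $\mI^\prime$.
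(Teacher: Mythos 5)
Your plan for the transformation formula \eqref{eqn:Iprime_transformation} is the right general kind of computation, but the mechanism you invoke for the cancellation of the higher-order terms is not the one that operates: the divergence identities of Lemma~\ref{lem:div_identities} do not kill the terms quadratic in $\nabla\Upsilon$ that arise from $\lv V_{\alpha\bar\beta\gamma}\rv^2$ and from $\Delta_b\lv S_{\alpha\bar\beta\gamma\bar\sigma}\rv^2$. What makes them cancel is the purely algebraic, dimension-five identity $\mS_{\alpha\bar\beta}=S_{\alpha\bar\rho\gamma\bar\sigma}S^{\bar\rho}{}_{\bar\beta}{}^{\bar\sigma\gamma}-\frac{1}{2}\lv S_{\gamma\bar\sigma\delta\bar\rho}\rv^2h_{\alpha\bar\beta}=0$ (valid because $\dim_{\bC}H^{1,0}=2$), which is equivalent to the statement that $X_\alpha$ itself is CR invariant: one finds that $e^{2\Upsilon}\hX_\alpha$ differs from $X_\alpha$ exactly by $\mS_{\alpha\bar\beta}$ contracted into $\nabla\Upsilon$. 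Without isolating this identity the computation will not close up, and it is also needed again in the second half.

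The second half has a genuine gap. After integrating by parts you propose to decompose the \emph{function} $\Real\nabla^\gamma X_\gamma$ as a universal constant times $c_2(H^{1,0})$ plus terms in $\mP^\perp$. But $c_2(H^{1,0})$ lives in $H^4(M;\bR)$ and is represented by a $4$-form; it cannot appear as a summand of a scalar function, and ``pairing the $c_2$ piece with $\Upsilon$'' does not compute a topological pairing since $\Upsilon$ is a function, not a closed form. Moreover, showing that $\Real\nabla^\gamma X_\gamma$ lies in $\mP^\perp$ modulo something controlled by $c_2$ is precisely the assertion to be proved, so the proposed decomposition is circular as stated. The hypothesis $c_2(H^{1,0})=0$ must be brought in \emph{before} taking the divergence: one forms the $4$-form $\xi = X_\alpha\,\theta\wedge d\theta\wedge\theta^\alpha + X_{\bar\beta}\,\theta\wedge d\theta\wedge\theta^{\bar\beta}$ and shows by a Chern--Weil computation with the Tanaka--Webster curvature forms (using pseudo-Einsteinness and again $\dim_{\bC}H^{1,0}=2$) that $\xi$ represents $4\pi^2c_2(H^{1,0})$. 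Vanishing of the class then gives $\xi=d\alpha$ with $\alpha\wedge\theta=0=\alpha\wedge d\theta$ (Rumin), and since $\Upsilon\in\mP$ is characterized by $dd_b^c\Upsilon\equiv 0\bmod d\theta$ where $d_b^c\Upsilon=-i\nabla_\alpha\Upsilon\,\theta^\alpha+i\nabla_{\bar\beta}\Upsilon\,\theta^{\bar\beta}$, one obtains $d_b^c\Upsilon\wedge\xi=-d\left(d_b^c\Upsilon\wedge\alpha\right)$; as $d_b^c\Upsilon\wedge\xi$ is exactly $2\Real\left(X^\gamma\nabla_\gamma\Upsilon\right)$ times the volume form, Stokes' theorem finishes the argument. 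Keeping $X_\alpha$ as the coefficient of a $4$-form, rather than passing to its divergence, is the essential step your outline is missing.
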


The proof of Proposition~\ref{prop:iprime} depends on an explicit realisation of the real Chern class $c_2(H^{1,0})\in H^4(M;\bR)$ and the observation that, in dimension five,
\begin{equation}
 \label{eqn:mS}
 \mS_{\alpha\bar\beta} = S_{\alpha\bar\rho\gamma\bar\sigma}S^{\bar\rho}{}_{\bar\beta}{}^{\bar\sigma\gamma} - \frac{1}{2}\left| S_{\gamma\bar\sigma\delta\bar\rho}\right|^2\,h_{\alpha\bar\beta} = 0 .
\end{equation}
Indeed, since $\dim_{\bC} H^{1,0}=2$, we have that
\[ 0 = h_{[\alpha\bar\beta}S_\gamma{}^\gamma{}_{|\mu|}{}^\nu S_{\rho]}{}^\rho{}_\nu{}^\mu = 2\mS_{\alpha\bar\beta} , \]
where our notation means that we skew over the lower indices $\alpha,\gamma,\rho$.  Define
\begin{equation}
 \label{eqn:Xalpha_dual}
 \xi = X_\alpha\,\theta\wedge d\theta\wedge\theta^\alpha + X_{\bar\beta}\,\theta\wedge d\theta\wedge\theta^{\bar\beta} .
\end{equation}
The above observations enable us to identify $\xi$ as an element of $4\pi^2c_2(H^{1,0})$ on any five-dimensional pseudo-Einstein manifold.


\begin{lem}
 \label{lem:c2_rep}
 Let $(M^5,H,\theta)$ be a pseudo-Einstein manifold and let $\xi$ be as in~\eqref{eqn:Xalpha_dual}.  Then $\xi$ is a representative of $c_2(H^{1,0})\in H^4(M;\bR)$.  In particular, if the real Chern class $c_2(H^{1,0})$ vanishes, then
 \[ \Real \int_M X^\gamma\nabla_\gamma v = 0 \]
 for all $v\in\mP$.
\end{lem}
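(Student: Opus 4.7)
The plan is to combine the Chern--Weil description of $c_2(H^{1,0})$ with Stokes' theorem on the compact five-manifold $M$. I would begin by computing the Chern--Weil representative
\[ c_2^{\mathrm{CW}} = \frac{1}{8\pi^2}\bigl(\tr(\Pi\wedge\Pi) - (\tr\Pi)\wedge(\tr\Pi)\bigr) \]
for the Tanaka--Webster connection on $H^{1,0}$, with curvature form $\Pi_\alpha{}^\beta$ given by the structure equation~\eqref{eqn:structure_equation}. Specialising to a pseudo-Einstein scale, I would substitute $P_{\alpha\bar\beta} = \tfrac{P}{n}h_{\alpha\bar\beta}$ together with $T_\alpha = -\tfrac{1}{n}\nabla_\alpha P$ (which is forced by~\eqref{eqn:divP} under the pseudo-Einstein condition) and express the remaining pieces of $\Pi_\alpha{}^\beta$ via the Chern tensor $S_{\alpha\bar\beta\gamma\bar\sigma}$ and torsion $A_{\alpha\gamma}$.

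The decisive geometric input is the dimensional identity~\eqref{eqn:mS}, a consequence of antisymmetrising three $(1,0)$-indices on a rank-two bundle. Using it in tandem with the Bianchi-type identities of Lemma~\ref{lem:div_identities} --- most importantly~\eqref{eqn:divS}, which turns $\nabla^{\bar\sigma}S_{\alpha\bar\beta\gamma\bar\sigma}$ into $-inV_{\alpha\bar\beta\gamma}$ --- the $(2,2)$-pieces of $\tr(\Pi\wedge\Pi)-(\tr\Pi)^{2}$ collapse onto terms of the form $\theta\wedge d\theta\wedge\theta^\alpha$ and its conjugate, up to an exact 3-form $d\mu$. The $-iS_{\alpha\bar\rho\gamma\bar\sigma}V^{\bar\rho\gamma\bar\sigma}$ contribution to $X_\alpha$ emerges through this substitution, while the $\tfrac14\nabla_\alpha|S|^2$ term is produced by rearranging a total derivative in the representative (modifying $c_2^{\mathrm{CW}}$ by an exact form, hence leaving the de Rham class unchanged). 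One concludes that $4\pi^2c_2^{\mathrm{CW}} = \xi + d\mu$, so $\xi$ represents $4\pi^2c_2(H^{1,0})$ and, equivalently up to a positive scalar, represents $c_2(H^{1,0})$ in $H^4(M;\bR)$.

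For the second assertion, the vanishing of $c_2(H^{1,0})\in H^4(M;\bR)$ means that the closed 4-form $\xi$ is globally exact: $\xi = d\eta$ for some $\eta\in\Omega^{3}(M)$. For any smooth function $v$ one then has $d(v\eta)=dv\wedge\eta+v\,\xi$ and hence, by applying $d$ once more and using $d\xi=0$, $\int_M dv\wedge\xi = \int_M d(v\,\xi) = 0$ by Stokes' theorem on the closed manifold $M$. A direct local computation using $d\theta = ih_{\mu\bar\nu}\theta^\mu\wedge\theta^{\bar\nu}$ shows that $\theta\wedge\xi=0$, so only the $\partial_b v$ and $\bar\partial_b v$ parts of $dv$ survive; tracking the holomorphic/antiholomorphic index structure in $n=2$ then yields
\[ dv\wedge\xi = c\,\Real\!\bigl(X^\gamma\nabla_\gamma v\bigr)\,\theta\wedge(d\theta)^{2} \]
for an explicit nonzero real constant $c$ and any real-valued $v$, which in particular applies to $v\in\mathcal{P}$. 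Combining this with the previous sentence produces the stated integral identity.

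The main obstacle is the Chern--Weil calculation in Part 1: after full expansion, $\tr(\Pi\wedge\Pi)$ and $(\tr\Pi)^{2}$ involve many terms --- quadratic in curvature, quadratic in torsion, and mixed curvature/torsion-derivative cross terms --- and the bookkeeping required to determine which combinations become exact and which assemble into $X_\alpha$ is delicate. The critical step is recognising that the identity~\eqref{eqn:mS} is precisely what is needed to kill the $\lvert S\rvert^2$-type terms that are not already accounted for by $\nabla_\alpha\lvert S\rvert^2$ (whence its appearance in $X_\alpha$), so that only the $S\cdot V$ contraction and the divergence of $\lvert S\rvert^2$ remain.
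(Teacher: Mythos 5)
Your first step --- identifying $\xi$ as a Chern--Weil representative of $4\pi^2c_2(H^{1,0})$ by expanding $\Pi_\mu{}^\nu\wedge\Pi_\nu{}^\mu$ in a pseudo-Einstein scale and using the rank-two identity~\eqref{eqn:mS} together with Lemma~\ref{lem:div_identities} --- is exactly the route the paper takes, and your outline of it is sound even though you defer the bookkeeping.

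The second step, however, contains a genuine error. You claim that $dv\wedge\xi = c\,\Real\bigl(X^\gamma\nabla_\gamma v\bigr)\,\theta\wedge(d\theta)^2$ for a nonzero real constant $c$. This is false: since $\theta^\alpha\wedge\theta\wedge d\theta\wedge\theta^{\bar\beta}=\tfrac{i}{2}h^{\alpha\bar\beta}\,\theta\wedge(d\theta)^2$ carries a factor of $i$, wedging the \emph{real} one-form $dv$ against $\xi$ produces $\nabla_\alpha v\,X^\alpha$ and its conjugate with coefficients $\tfrac{i}{2}$ and $-\tfrac{i}{2}$, so that $dv\wedge\xi$ is proportional to $\Imaginary\bigl(X^\gamma\nabla_\gamma v\bigr)\,\theta\wedge(d\theta)^2$, not the real part. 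Your Stokes argument then proves only $\int_M\Imaginary\bigl(X^\gamma\nabla_\gamma v\bigr)=0$ --- a statement which, as your own derivation shows, holds for \emph{every} smooth $v$ as soon as $\xi$ is closed, and which uses neither the hypothesis $c_2(H^{1,0})=0$ (only closedness, which is automatic once $\xi$ represents a Chern class) nor $v\in\mP$. The fact that both hypotheses of the lemma are idle in your argument is itself the signal that it is proving the wrong identity.

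To capture the real part one must wedge $\xi$ against $d_b^cv=-i\nabla_\alpha v\,\theta^\alpha+i\nabla_{\bar\beta}v\,\theta^{\bar\beta}$, whose extra factors of $\pm i$ convert the imaginary part into the real part. But $d_b^cv$ is not closed, so the naive integration by parts breaks down; this is precisely where both hypotheses enter. The paper's proof chooses, following Rumin, a primitive $\alpha$ of the exact form $\xi$ satisfying $\alpha\wedge\theta=0=\alpha\wedge d\theta$ (this uses $c_2(H^{1,0})=0$), and then uses that $v\in\mP$ is equivalent to $dd_b^cv\equiv0\bmod d\theta$, so that $d_b^cv\wedge\xi=d\bigl(d_b^cv\wedge\alpha\bigr)-dd_b^cv\wedge\alpha=d\bigl(d_b^cv\wedge\alpha\bigr)$ and Stokes' theorem applies. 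You would need to replace your final paragraph with this argument.
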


\begin{proof}
 Observe that $\xi\wedge\theta=0=\xi\wedge d\theta$.  Suppose that $\xi$ is exact.  As observed by Rumin~\cite{Rumin1994}, we obtain a three-form $\alpha$ such that $d\alpha=\xi$ and $\alpha\wedge\theta=0=\alpha\wedge d\theta$.  Denote $d_b^cv=-i\nabla_\alpha v\,\theta^\alpha + i\nabla_{\bar\beta}v\,\theta^{\bar\beta}$ and observe that $d_b^cv\wedge\xi=2\Real X^\gamma\nabla_\gamma v$.  Since $v\in\mP$ if and only if $dd_b^cv=0\mod d\theta$ (cf.\ \cite{Lee1988}), we conclude that if $v\in\mP$, then
 \[ 2\Real X^\gamma\nabla_\gamma v = d_b^cv\wedge\xi = -d\left(d_b^cv\wedge\alpha\right) . \]
 In particular, $\Real\int X^\gamma\nabla_\gamma v=0$.

 We now show that $\xi$ is exact.  It suffices to show that $\xi\in 4\pi^2c_2\left(H^{1,0}\right)$.

 Since $\theta$ is pseudo-Einstein, $c_1\left(H^{1,0}\right)$ vanishes in $H^1(M;\bR)$ (cf.\ \cite{Lee1988}).  It follows that $8\pi^2c_2\left(H^{1,0}\right)=\left[ \Pi_\mu{}^\nu\wedge\Pi_\nu{}^\mu\right]$ for $\Pi_\alpha{}^\beta$ the curvature forms~\eqref{eqn:structure_equation}.  Since $\dim_{\bC} H^{1,0}=2$, we compute that
 \begin{align*}
  \Pi_\mu{}^\nu\wedge\Pi_\nu{}^\mu & = R_{\alpha\bar\beta\mu}{}^\nu R_{\gamma\bar\sigma\nu}{}^\mu \theta^\alpha\wedge\theta^{\bar\beta}\wedge\theta^{\gamma}\wedge\theta^{\bar\sigma} - 2R_{\alpha\bar\beta\mu}{}^\nu\nabla^\mu A_{\gamma\nu}\,\theta\wedge\theta^\alpha\wedge\theta^{\bar\beta}\wedge\theta^\gamma \\
   & \quad + 2R_{\alpha\bar\beta\mu}{}^\nu\nabla_\nu A_{\bar\sigma}{}^\mu\,\theta\wedge\theta^\alpha\wedge\theta^{\bar\beta}\wedge\theta^{\bar\sigma} - 2iA_{\bar\sigma}{}^\nu\nabla_{\bar\beta}A_{\gamma\nu}\theta\wedge\theta^\gamma\wedge\theta^{\bar\beta}\wedge\theta^{\bar\sigma} \\
   & \quad  - 2iA_{\alpha\mu}\nabla_\gamma A_{\bar\sigma}{}^\mu\,\theta\wedge\theta^\alpha\wedge\theta^\gamma\wedge\theta^{\bar\sigma} - 2A_{\alpha\mu}A_{\bar\rho}{}^\mu h_{\nu\bar\sigma}\,\theta^\alpha\wedge\theta^{\bar\sigma}\wedge\theta^\nu\wedge\theta^{\bar\rho} .
 \end{align*}
 To simplify this, observe that, since $\dim_{\bC} H^{1,0}=2$ and $\nabla_\rho A_{\alpha\gamma}=\nabla_\gamma A_{\alpha\rho}$ (cf.\ \cite{Lee1988}),
 \begin{multline}
  \label{eqn:dA} d\left(A_{\alpha\mu}A^\mu{}_{\bar\beta}\,\theta\wedge\theta^\alpha\wedge\theta^{\bar\beta}\right) = A_{\alpha\mu}\nabla_\gamma A^\mu{}_{\bar\beta}\,\theta\wedge\theta^\alpha\wedge\theta^\gamma\wedge\theta^{\bar\beta} \\ + A_{\bar\sigma}{}^\mu\nabla_{\bar\beta}A_{\alpha\mu}\theta\wedge\theta^\alpha\wedge\theta^{\bar\beta}\wedge\theta^{\bar\sigma} + A_{\alpha\mu}A^\mu{}_{\bar\beta}\,d\theta\wedge\theta^\alpha\wedge\theta^{\bar\beta} .
 \end{multline}
 Furthermore, since $\dim_{\bC} H^{1,0}=2$ and $\theta$ is pseudo-Einstein,
 \begin{align}
  \label{eqn:R2} R_{\alpha\bar\beta\mu}{}^\nu R_{\gamma\bar\sigma\nu}{}^\mu \theta^\alpha\wedge\theta^{\bar\beta}\wedge\theta^{\gamma}\wedge\theta^{\bar\sigma} & = \left(\frac{1}{2}\lv S_{\alpha\bar\beta\gamma\bar\sigma}\rv^2 - 3P^2\right) d\theta\wedge d\theta , \\
  \label{eqn:RV} R_{\alpha\bar\beta\mu}{}^\nu\nabla^\mu A_{\gamma\nu}\,\theta\wedge\theta^\alpha\wedge\theta^{\bar\beta}\wedge\theta^\gamma & = \left(iS_{\gamma\bar\sigma\mu\bar\nu}V^{\bar\sigma\mu\bar\nu} - \frac{3}{2}\nabla_\gamma(P^2)\right)\theta\wedge d\theta\wedge\theta^\gamma .
 \end{align}
 Using~\eqref{eqn:dA}, \eqref{eqn:R2} and~\eqref{eqn:RV} to simplify the expression for $\Pi_\mu{}^\nu\wedge\Pi_\nu{}^\mu$ yields
 \[ \Pi_\mu{}^\nu\wedge\Pi_\nu{}^\mu = 2\xi + d\left[ \left(\frac{1}{2}\lv S_{\alpha\bar\beta\gamma\bar\sigma}\rv^2 - 3P^2\right)\theta\wedge d\theta - 2iA_{\alpha\mu}A^\mu{}_{\bar\beta}\,\theta\wedge\theta^\alpha\wedge\theta^{\bar\beta} \right] . \]
 In particular, we see that $4\pi^2c_2(H^{1,0})=[\xi]$, as desired.
\end{proof}

\begin{proof}[Proof of Proposition~\ref{prop:iprime}]
 Let $\htheta=e^\Upsilon\theta$.  A straightforward computation using the transformation formulae in~\cite{GoverGraham2005,Lee1988} yields
 \[ e^{2\Upsilon}\hX_\alpha = X_\alpha + \mS_{\alpha\bar\beta}\nabla^\beta\Upsilon . \]
 As $\mS_{\alpha\bar\beta}=0$, we see that $X_\alpha$ is a CR invariant.  From this observation and the transformation formulae in~\cite{GoverGraham2005,Lee1988}, it is straightforward to verify that~\eqref{eqn:Iprime_transformation} holds.  The final conclusion now follows from Lemma~\ref{lem:c2_rep}.
\end{proof}

One important class of CR manifolds are those which embed as boundaries of Stein domains.  As pointed out to us by Taiji Marugame, five-dimensional CR manifolds in this class always have vanishing Chern classes.

\begin{prop}
 \label{prop:stein_vanish}
 Let $(M^5,H,\theta)$ be a pseudo-Einstein manifold which is the boundary of a Stein manifold $V^3$.  Then $c_2(H^{1,0})=0$.
\end{prop}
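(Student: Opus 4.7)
The plan is to extract $c_2(H^{1,0})$ from the short exact sequence of holomorphic bundles along $M$, and then to exploit the fact that $V$ being Stein forces the ambient Chern class $c_2(T^{1,0}V)$ to be zero in real cohomology, while pseudo-Einsteinness kills the leftover cross-term.

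First I would write down the short exact sequence of complex vector bundles over $M$,
\[
0 \longrightarrow H^{1,0} \longrightarrow T^{1,0}V\big|_M \longrightarrow N^{1,0} \longrightarrow 0,
\]
where $N^{1,0}$ is the holomorphic normal line bundle of $M$ in $V$. Since this sequence splits smoothly, the Whitney sum formula gives
\[
c(T^{1,0}V)\big|_M = c(H^{1,0}) \cup c(N^{1,0}),
\]
and extracting the degree-four part yields
\[
c_2\bigl(T^{1,0}V\big|_M\bigr) = c_2(H^{1,0}) + c_1(H^{1,0}) \cup c_1(N^{1,0}) \quad \text{in } H^4(M;\mathbb{R}).
\]

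Next I would invoke the Andreotti--Frankel theorem: any Stein manifold of complex dimension $n$ has the homotopy type of a CW complex of real dimension at most $n$. Since $V$ has complex dimension $3$, we obtain $H^k(V;\mathbb{R}) = 0$ for $k \geq 4$, and in particular $c_2(T^{1,0}V) = 0$ in $H^4(V;\mathbb{R})$. Pulling back along the inclusion $\iota\colon M \hookrightarrow V$ gives
\[
c_2\bigl(T^{1,0}V\big|_M\bigr) = \iota^\ast c_2(T^{1,0}V) = 0 \quad \text{in } H^4(M;\mathbb{R}).
\]

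Finally, because $\theta$ is pseudo-Einstein, the result of Lee~\cite{Lee1988} gives $c_1(H^{1,0}) = 0$ in $H^2(M;\mathbb{R})$, so the cross-term $c_1(H^{1,0})\cup c_1(N^{1,0})$ vanishes. Substituting into the displayed formula yields $c_2(H^{1,0}) = 0$ in $H^4(M;\mathbb{R})$, as desired. There is no real obstacle here --- the argument is purely topological, and the only subtlety is ensuring one works with real coefficients (so that Andreotti--Frankel applies cleanly without worrying about torsion in the integral Chern classes).
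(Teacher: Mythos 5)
Your argument is correct, but it diverges from the paper's proof at the step where the cross-term is handled, and the comparison is instructive. The paper observes that the holomorphic normal bundle $N^{1,0}$ of $M\subset V$ is \emph{trivial} (a global defining function $\rho$ trivialises $(N^{1,0})^\ast$ via $\partial\rho|_M$), so the Whitney formula collapses immediately to $c_2(H^{1,0})=c_2(H^{1,0}\oplus N^{1,0})=c_2(i^\ast T^{1,0}V)$, with no cross-term to worry about; it then concludes via naturality and the vanishing $H^4(V;\bC)=0$ for a Stein $3$-fold, citing Serre rather than Andreotti--Frankel (either suffices, and your remark about working with real coefficients to avoid torsion issues is well taken). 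You instead keep the cross-term $c_1(H^{1,0})\cup c_1(N^{1,0})$ and kill it by invoking the pseudo-Einstein hypothesis through Lee's theorem $c_1(H^{1,0})=0$ in $H^2(M;\bR)$. Both routes are valid, but the paper's version is slightly more economical with hypotheses: it shows that $c_2(H^{1,0})=0$ for \emph{any} boundary of a Stein $3$-fold, with the pseudo-Einstein condition playing no role in this particular proposition (it is needed elsewhere, e.g.\ in Lemma~\ref{lem:c2_rep}, and of course for the existence of such a contact form in the first place). If you wanted to tighten your argument to match, you would simply note the triviality of $N^{1,0}$, which makes the appeal to Lee's theorem unnecessary here.
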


\begin{proof}
 Note that the holomorphic normal bundle $N^{1,0}$ of $M\subset V$ is trivial and $H^{1,0}\oplus N^{1,0}=i^\ast T^{1,0}V$, the pullback of $T^{1,0}V$ with respect to the inclusion $i\colon M\to V$.  Thus
 \[ c_2(H^{1,0}) = c_2(H^{1,0}\oplus N^{1,0}) = c_2(i^\ast T^{1,0}V) . \]
 By naturality of the Chern classes,
 \[ c_2(i^\ast T^{1,0}V) = i^\ast c_2(T^{1,0}V) . \]
 Since $\dim_{\bC}V=3$ and $V$ is Stein, it holds that $H^4(V;\bC)=0$ (see~\cite{Serre1953}).  In particular, $c_2(T^{1,0}V)=0$.
\end{proof}

We conclude this article with two remarks about possible interpretations of the pseudohermitian invariant $\mI^\prime$.

\begin{remark}
 \label{rk:fefferman_graham}
 $\mI^\prime$ can be informally regarded as the ``prime analogue'' of the conformal invariant $\left|\nabla_A W_{BCEF}\right|^2$ discovered by Fefferman and Graham~\cite{FeffermanGraham2012}, in the same spirit as the $P^\prime$-operator is the ``primed analogue'' of the Paneitz operator.  Indeed, on a pseudohermitian manifold $(M^{2n+1},H,\theta)$, define
 \begin{multline}
  \label{eqn:mI}
  \mI = \nabla^\gamma\left(iS_{\gamma\bar\sigma\alpha\bar\beta}V^{\bar\sigma\alpha\bar\beta} - \frac{1}{2n}\nabla_\gamma\lv S_{\alpha\bar\beta\gamma\bar\sigma}\rv^2\right) - \mS^{\alpha\bar\beta}P_{(\alpha\bar\beta)_0} \\ + (n-2)\left(\frac{1}{2n(n-4)}\Delta_b\lv S_{\alpha\bar\beta\gamma\bar\sigma}\rv^2 + \lv V_{\alpha\bar\beta\gamma}\rv^2 - \frac{2}{n(n-4)}P\lv S_{\alpha\bar\beta\gamma\bar\sigma}\rv^2\right) .
 \end{multline}
 where $\mS_{\alpha\bar\beta}=S_{\alpha\bar\rho\gamma\bar\sigma}S^{\bar\rho}{}_{\bar\beta}{}^{\bar\sigma\gamma}-\frac{1}{n}\lv S_{\gamma\bar\sigma\delta\bar\rho}\rv^2h_{\alpha\bar\beta}$.  Set
 \[ K_{C\bar E} = \Omega_{\alpha\bar\beta C\bar F}\Omega^{\bar\beta\alpha\bar F}{}_{\bar E} \in \mE_{C\bar E}(-2) . \]
 It is straightforward to compute that
 \[ \frac{1}{n-3}D^C K_{C\bar E} - \frac{1}{2(n-4)}D_{\bar E}\left| S_{\alpha\bar\beta\gamma\bar\sigma}\right|^2 = \mI Z_{\bar E} \in \mE_{\bar E}(-2,-3) . \]
 In particular, it follows that~\eqref{eqn:mI} defines a CR invariant of weight $(-3,-3)$.  This invariant is defined via a tractor expression equivalent to the tractor expression giving the Fefferman--Graham invariant (cf.\ \cite{CapGover2003}).  Restricting to pseudo-Einstein metrics, one observes that $\mI=(n-2)\mI^\prime$ modulo divergences, which motivates the definition of $\mI^\prime$.
\end{remark}

\begin{remark}
 \label{rk:deser-schwimmer}
 Alexakis~\cite{Alexakis2012} proved that any local Riemannian invariant $I(g)$ for which $\int I(g)\dvol_g$ is a conformal invariant admits a decomposition
 \[ I(g) = cQ_g + \text{(local conformal invariant)} + \text{(divergence)} . \]
 Hirachi conjectured~\cite[p.\ 242]{Hirachi2013} that any local pseudohermitian invariant $I(\theta)$ for which $\int I(\theta)\,\theta\wedge d\theta^n$ is a \invariant/ should admit a similar decomposition in terms of a constant multiple of the $Q^\prime$-curvature, a local CR invariant, and a divergence.  It seems to us that the $\mI^\prime$-invariant, through the following two questions, provides a new insight into this conjecture.

 First, is there a five-dimensional pseudo-Einstein manifold for which the CR invariant one-form $X_\gamma$ is nonzero?  If so, then $\mI^\prime$ is not a local \invariant/, and thus provides a counterexample to Hirachi's conjecture.  If not, then Hirachi's conjecture seems correct, at least in dimension five and after modifying it to allow local \invariant/s.

 Second, how can one understand the transformation formula~\eqref{eqn:Iprime_transformation}?  Specifically, observe that the proof of Lemma~\ref{lem:c2_rep} shows that if $\xi=d\alpha$ for $\alpha=i\Omega_{\alpha\bar\beta}\theta\wedge\theta^\alpha\wedge\theta^{\bar\beta}$ and $\Omega_{\alpha\bar\beta}\in\mE_{(\alpha\beta)_0}$ --- that is, if $\alpha$ can be chosen to be an element of $F^{2,1}$ in the graded Rumin complex~\cite{GarfieldLee1998} --- then the map $\mP\ni v\mapsto\Real X^\gamma\nabla_\gamma v$ is formally self-adjoint on the space of CR pluriharmonic functions.  Thus, one might suspect that the transformation formula for $\mI^\prime$ is governed by a formally self-adjoint operator on CR pluriharmonic functions, a property shared by the $Q^\prime$-curvature.
\end{remark}
%

\bibliographystyle{abbrv}
\bibliography{qprime_bib}

\newcommand{\noopsort}[1]{}
\begin{thebibliography}{10}

\bibitem{Alexakis2012}
S.~Alexakis.
\newblock {\em The decomposition of global conformal invariants}, volume 182 of
  {\em Annals of Mathematics Studies}.
\newblock Princeton University Press, Princeton, NJ, 2012.

\bibitem{Boutet1975}
L.~Boutet~de Monvel.
\newblock Int\'egration des \'equations de {C}auchy-{R}iemann induites
  formelles.
\newblock In {\em S\'eminaire {G}oulaouic-{L}ions-{S}chwartz 1974--1975;
  \'{E}quations aux deriv\'ees partielles lin\'eaires et non lin\'eaires},
  pages Exp. No. 9, 14. Centre Math., \'Ecole Polytech., Paris, 1975.

\bibitem{BoyerGalicki2008}
C.~P. Boyer and K.~Galicki.
\newblock {\em Sasakian geometry}.
\newblock Oxford Mathematical Monographs. Oxford University Press, Oxford,
  2008.

\bibitem{BoyerGalickiMatzeu2006}
C.~P. Boyer, K.~Galicki, and P.~Matzeu.
\newblock On eta-{E}instein {S}asakian geometry.
\newblock {\em Comm. Math. Phys.}, 262(1):177--208, 2006.

\bibitem{Branson1995}
T.~P. Branson.
\newblock Sharp inequalities, the functional determinant, and the complementary
  series.
\newblock {\em Trans. Amer. Math. Soc.}, 347(10):3671--3742, 1995.

\bibitem{BransonFontanaMorpurgo2007}
T.~P. Branson, L.~Fontana, and C.~Morpurgo.
\newblock Moser-{T}rudinger and {B}eckner-{O}nofri's inequalities on the {CR}
  sphere.
\newblock {\em Ann. of Math. (2)}, 177(1):1--52, 2013.

\bibitem{BransonGover2005}
T.~P. Branson and A.~R. Gover.
\newblock Conformally invariant operators, differential forms, cohomology and a
  generalisation of {$Q$}-curvature.
\newblock {\em Comm. Partial Differential Equations}, 30(10-12):1611--1669,
  2005.

\bibitem{BransonGover2008b}
T.~P. Branson and A.~R. Gover.
\newblock Origins, applications and generalisations of the {$Q$}-curvature.
\newblock {\em Acta Appl. Math.}, 102(2-3):131--146, 2008.

\bibitem{BurnsDiederichShnider1977}
D.~Burns, K.~Diedrich, and S.~Shnider.
\newblock Distinguished curves in pseudoconvex boundaries.
\newblock {\em Duke Math. J.}, 44(2):407--431, 1977.

\bibitem{BurnsEpstein1988}
D.~Burns and C.~L. Epstein.
\newblock A global invariant for three-dimensional {CR}-manifolds.
\newblock {\em Invent. Math.}, 92(2):333--348, 1988.

\bibitem{BurnsEpstein1990c}
D.~Burns and C.~L. Epstein.
\newblock Characteristic numbers of bounded domains.
\newblock {\em Acta Math.}, 164(1-2):29--71, 1990.

\bibitem{CapGover2002}
A.~{\v{C}}ap and A.~R. Gover.
\newblock Tractor calculi for parabolic geometries.
\newblock {\em Trans. Amer. Math. Soc.}, 354(4):1511--1548 (electronic), 2002.

\bibitem{CapGover2003}
A.~{\v{C}}ap and A.~R. Gover.
\newblock Standard tractors and the conformal ambient metric construction.
\newblock {\em Ann. Global Anal. Geom.}, 24(3):231--259, 2003.

\bibitem{CapGover2008}
A.~{\v{C}}ap and A.~R. Gover.
\newblock C{R}-tractors and the {F}efferman space.
\newblock {\em Indiana Univ. Math. J.}, 57(5):2519--2570, 2008.

\bibitem{CapSchichl2000}
A.~{\v{C}}ap and H.~Schichl.
\newblock Parabolic geometries and canonical {C}artan connections.
\newblock {\em Hokkaido Math. J.}, 29(3):453--505, 2000.

\bibitem{Cartan1932b}
{\'E}.~Cartan.
\newblock Sur la g\'eom\'etrie pseudo-conforme des hypersurfaces de l'espace de
  deux variables complexes {II}.
\newblock {\em Ann. Scuola Norm. Sup. Pisa Cl. Sci. (2)}, 1(4):333--354, 1932.

\bibitem{Cartan1932a}
E.~Cartan.
\newblock Sur la g\'eom\'etrie pseudo-conforme des hypersurfaces de l'espace de
  deux variables complexes.
\newblock {\em Ann. Mat. Pura Appl.}, 11(1):17--90, 1933.

\bibitem{CaseHsiaoYang2014}
J.~S. Case, C.-Y. Hsiao, and P.~C. Yang.
\newblock Extremal metrics for the ${Q}^\prime$-curvature in three dimensions.
\newblock arXiv:1511.05013, \noopsort{2200}preprint.

\bibitem{CaseYang2012}
J.~S. Case and P.~C. Yang.
\newblock A {P}aneitz-type operator for {CR} pluriharmonic functions.
\newblock {\em Bull. Inst. Math. Acad. Sin. (N.S.)}, 8(3):285--322, 2013.

\bibitem{ChanilloChiuYang2010}
S.~Chanillo, H.-L. Chiu, and P.~Yang.
\newblock Embeddability for 3-dimensional {C}auchy-{R}iemann manifolds and {CR}
  {Y}amabe invariants.
\newblock {\em Duke Math. J.}, 161(15):2909--2921, 2012.

\bibitem{ChengLee1990}
J.-H. Cheng and J.~M. Lee.
\newblock The {B}urns-{E}pstein invariant and deformation of {CR} structures.
\newblock {\em Duke Math. J.}, 60(1):221--254, 1990.

\bibitem{ChernMoser1974}
S.~S. Chern and J.~K. Moser.
\newblock Real hypersurfaces in complex manifolds.
\newblock {\em Acta Math.}, 133:219--271, 1974.

\bibitem{Fefferman1976}
C.~Fefferman.
\newblock Monge-{A}mp\`ere equations, the {B}ergman kernel, and geometry of
  pseudoconvex domains.
\newblock {\em Ann. of Math. (2)}, 103(2):395--416, 1976.

\bibitem{FeffermanGraham2012}
C.~Fefferman and C.~R. Graham.
\newblock {\em The ambient metric}, volume 178 of {\em Annals of Mathematics
  Studies}.
\newblock Princeton University Press, Princeton, NJ, 2012.

\bibitem{FeffermanHirachi2003}
C.~Fefferman and K.~Hirachi.
\newblock Ambient metric construction of {$Q$}-curvature in conformal and {CR}
  geometries.
\newblock {\em Math. Res. Lett.}, 10(5-6):819--831, 2003.

\bibitem{GarfieldLee1998}
P.~M. Garfield and J.~M. Lee.
\newblock The {R}umin complex on {CR} manifolds.
\newblock {\em S\=urikaisekikenky\=usho K\=oky\=uroku}, (1037):29--36, 1998.
\newblock CR geometry and isolated singularities (Japanese) (Kyoto, 1996).

\bibitem{Gover2006}
A.~R. Gover.
\newblock Laplacian operators and {$Q$}-curvature on conformally {E}instein
  manifolds.
\newblock {\em Math. Ann.}, 336(2):311--334, 2006.

\bibitem{GoverGraham2005}
A.~R. Gover and C.~R. Graham.
\newblock C{R} invariant powers of the sub-{L}aplacian.
\newblock {\em J. Reine Angew. Math.}, 583:1--27, 2005.

\bibitem{GoverPeterson2003}
A.~R. Gover and L.~J. Peterson.
\newblock Conformally invariant powers of the {L}aplacian, {$Q$}-curvature, and
  tractor calculus.
\newblock {\em Comm. Math. Phys.}, 235(2):339--378, 2003.

\bibitem{GoverPeterson2005}
A.~R. Gover and L.~J. Peterson.
\newblock The ambient obstruction tensor and the conformal deformation complex.
\newblock {\em Pacific J. Math.}, 226(2):309--351, 2006.

\bibitem{GoverWaldron2011}
A.~R. Gover and A.~Waldron.
\newblock Boundary calculus for conformally compact manifolds.
\newblock {\em Indiana Univ. Math. J.}, 63(1):119--163, 2014.

\bibitem{Graham1984}
C.~R. Graham.
\newblock Compatibility operators for degenerate elliptic equations on the ball
  and {H}eisenberg group.
\newblock {\em Math. Z.}, 187(3):289--304, 1984.

\bibitem{Graham1987}
C.~R. Graham.
\newblock Scalar boundary invariants and the {B}ergman kernel.
\newblock In {\em Complex analysis, {II} ({C}ollege {P}ark, {M}d., 1985--86)},
  volume 1276 of {\em Lecture Notes in Math.}, pages 108--135. Springer,
  Berlin, 1987.

\bibitem{GJMS1992}
C.~R. Graham, R.~Jenne, L.~J. Mason, and G.~A.~J. Sparling.
\newblock Conformally invariant powers of the {L}aplacian. {I}. {E}xistence.
\newblock {\em J. London Math. Soc. (2)}, 46(3):557--565, 1992.

\bibitem{GrahamLee1988}
C.~R. Graham and J.~M. Lee.
\newblock Smooth solutions of degenerate {L}aplacians on strictly pseudoconvex
  domains.
\newblock {\em Duke Math. J.}, 57(3):697--720, 1988.

\bibitem{HarveyLawson1977}
F.~R. Harvey and H.~B. Lawson, Jr.
\newblock On boundaries of complex analytic varieties. {II}.
\newblock {\em Ann. of Math. (2)}, 106(2):213--238, 1977.

\bibitem{Hirachi1990}
K.~Hirachi.
\newblock Scalar pseudo-{H}ermitian invariants and the {S}zeg{\H o} kernel on
  three-dimensional {CR} manifolds.
\newblock In {\em Complex geometry ({O}saka, 1990)}, volume 143 of {\em Lecture
  Notes in Pure and Appl. Math.}, pages 67--76. Dekker, New York, 1993.

\bibitem{Hirachi2013}
K.~Hirachi.
\newblock {$Q$}-prime curvature on {CR} manifolds.
\newblock {\em Differential Geom. Appl.}, 33(suppl.):213--245, 2014.

\bibitem{HirachiMarugameMatsumoto2015}
K.~Hirachi, T.~Marugame, and Y.~Matsumoto.
\newblock Variation of total {$Q$}-prime curvature on {CR} manifolds.
\newblock {\em Adv. Math.}, 306:1333--1376, 2017.

\bibitem{JerisonLee1987}
D.~Jerison and J.~M. Lee.
\newblock The {Y}amabe problem on {CR} manifolds.
\newblock {\em J. Differential Geom.}, 25(2):167--197, 1987.

\bibitem{Kuranishi1997}
M.~Kuranishi.
\newblock C{R} structures and {F}efferman's conformal structures.
\newblock {\em Forum Math.}, 9(2):127--164, 1997.

\bibitem{Lee1986}
J.~M. Lee.
\newblock The {F}efferman metric and pseudo-{H}ermitian invariants.
\newblock {\em Trans. Amer. Math. Soc.}, 296(1):411--429, 1986.

\bibitem{Lee1988}
J.~M. Lee.
\newblock Pseudo-{E}instein structures on {CR} manifolds.
\newblock {\em Amer. J. Math.}, 110(1):157--178, 1988.

\bibitem{Leitner2007}
F.~Leitner.
\newblock On transversally symmetric pseudo-{E}instein and
  {F}efferman-{E}instein spaces.
\newblock {\em Math. Z.}, 256(2):443--459, 2007.

\bibitem{Lempert1995}
L.~Lempert.
\newblock Algebraic approximations in analytic geometry.
\newblock {\em Invent. Math.}, 121(2):335--353, 1995.

\bibitem{Marugame2013}
T.~Marugame.
\newblock Renormalized {C}hern-{G}auss-{B}onnet formula for complete
  {K}ahler-{E}instein metrics.
\newblock {\em Amer. J. Math.}, 138(4), 2016.

\bibitem{Rumin1994}
M.~Rumin.
\newblock Formes diff\'erentielles sur les vari\'et\'es de contact.
\newblock {\em J. Differential Geom.}, 39(2):281--330, 1994.

\bibitem{Serre1953}
J.-P. Serre.
\newblock Quelques probl\`emes globaux relatifs aux vari\'et\'es de {S}tein.
\newblock In {\em Colloque sur les fonctions de plusieurs variables, tenu \`a
  {B}ruxelles, 1953}, pages 57--68. Georges Thone, Li\`ege; Masson \& Cie,
  Paris, 1953.

\bibitem{Sparks2011}
J.~Sparks.
\newblock Sasaki-{E}instein manifolds.
\newblock In {\em Surveys in differential geometry. {V}olume {XVI}. {G}eometry
  of special holonomy and related topics}, volume~16 of {\em Surv. Differ.
  Geom.}, pages 265--324. Int. Press, Somerville, MA, 2011.

\bibitem{Takeuchi2017}
Y.~Takeuchi.
\newblock Ambient constructions for {S}asakian eta-{E}instein manifolds.
\newblock arXiv:1706.03164, \noopsort{2099}preprint.

\bibitem{Tanaka1976}
N.~Tanaka.
\newblock On non-degenerate real hypersurfaces, graded {L}ie algebras and
  {C}artan connections.
\newblock {\em Japan. J. Math. (N.S.)}, 2(1):131--190, 1976.

\end{thebibliography}
\end{document}